\theoremstyle{plain}
\newtheorem{question}{Question} 
\newtheorem{thm}{Theorem}
\newtheorem{defn}{Definition}[section]
\newtheorem{prop}[defn]{Proposition}
\newtheorem{remark}[defn]{Remark}
\newtheorem{lm}[defn]{Lemma}
\newtheorem{case}{Case}
\newtheorem{ep}[defn]{Example}
\begin{document}
	
	\title{The extending surfaces of immersions into surfaces}
	\author{Bojun Zhao}
	\maketitle
	
	ABSTRACT: S. Blank solved the question of classifying immersed circles in $\mathbb{R}^{2}$ that extend to immersed disks, and how many topologically inequivalent disks can be extended. The quetions of various cases in $2$-dimension have already been solved by generalizing his method. In this paper, we give a new way, which is straightforward for the questions, and we determine all topological equivalence classes of immersed surfaces bounded by an arbitrary immersed circle in a closed oriented surface.
	
	\tableofcontents
	
	\section{Introduction}\
	
An immersion of a circle into a surface is \emph{normal} if its image is in general position,
i.e. it has finitely many self-intersections and each self-intersection is a transverse double point (\emph{node}).
In this paper,
we assume all immersions are smooth are oriented.
For an immersion $f: S^{1} \to \Sigma$ ($\Sigma$ is a surface),
we will always assume $f$ is normal.
For an immersion $g: S \to \Sigma$ ($S,\Sigma$ are surfaces),
we will always assume $g \mid_{\partial S}$ is normal.

Fix a surface $\Sigma$ and an immersion $f: S^{1} \to \Sigma$.
We say that $f$ \emph{extends} to an immersion $F: \Sigma_0 \to \Sigma$ ($\Sigma_0$ is a surface) if 
$F \mid_{\partial \Sigma_0} = f$.
We will always assume the interior of $\Sigma_0$ is mapped to the left of $f(S^{1})$ by $F$.

\begin{defn}\rm
	Let $S$ and $\Sigma$ be surfaces with $\partial S \ne \emptyset$.
	$g_1,g_2: S \to \Sigma$ are $2$ immersions such that
	$g_1 \mid_{\partial S} = g_2 \mid _{\partial S}$.
	$g_1,g_2$ are (topologically) \emph{equivalent} if there exists a diffeomorphism $h: S \to S$
	such that $h \mid_{\partial S} = id$ and $g_1 = g_2 \circ h$ (see \hyperref[Curley, Wolitzer]{[4]}).
\end{defn}

The questions are to classify the immersions that can be extended, 
and to give the number of topologically inequivalent extensions they have (or, determine all topological equivalence classes of extensions).
To answer the questions,
we should solve both the existence problems and the equivalence problems (see \hyperref[Spring]{[14]}).
We list the questions:

\begin{question}\label{question 1}\rm 
	Which immersed circles in $\mathbb{R}^{2}$ can be extended to immersed disks,
	and in how many inequivalent ways?
	(see \hyperref[Spring]{[14]})
\end{question}

\begin{question}\label{question 2}\rm 
	Which immersed circles in $\mathbb{R}^{2}$ can be extended to immersed surfaces,
	and in how many inequivalent ways?
	(see \hyperref[Bailey]{[1]})
\end{question}

\begin{question}\label{question 3}\rm 
	Given a closed oriented surface $\Sigma$ and a (homologically trivial) immersion $f:S^{1} \to \Sigma$.
	
	(i) (H.Hopf and R.Thom) When does it extend to an immersed surface? (see \hyperref[McIntyre]{[11]})
	
	(ii) Fix a nonnegative integer $n$,
	how many inequivalent extension $F$ are there such that\\
	$\min_{p \in \Sigma} \#(F^{-1}(p)) = n$?
\end{question}

The questions in $3$-dimension are (samely, the problems ask the existence and equivalence both):

\begin{question}\label{question 4}\rm \hyperref[Kirby]{[9, Problem 3.19]}
	Which immersed $2$-spheres in $\mathbb{R}^{3}$ bound immersed $3$-balls?
	(see \hyperref[Kirby]{[9]})
\end{question}

\begin{question}\label{question 5}\rm 
	Which immersed closed oriented surfaces in $\mathbb{R}^{3}$ bound immersed $3$-manifolds (and in how many inequivalent ways)?
\end{question}

Question \ref{question 1} was solved by S.Blank in his PhD thesis in 1967 (\hyperref[Blank]{[2]};
mentioned in \hyperref[Francis]{[6]},
\hyperref[Poenaru]{[12]}).
Before that,
C.Titus gave the answer of existence problem (\hyperref[Titus]{[13]}).
Blank proved a bijection between the set of topological equivalence classes of immersed disks bounded by an immersed circle in $\mathbb{R}^{2}$ and the set of groupings of a word given by the immmersed circle.
Blank's techniques were generalized widely to deal with other cases.
K. Bailey solved Question \ref{question 2} (\hyperref[Bailey]{[1]}).
G.Francis solved some cases of Question \ref{question 3} (ii) (\hyperref[Francis2]{[7]}).
D. Frisch has claimed to solve Question \ref{question 3} $(ii)$ (\hyperref[Frisch]{[8]}).
Question \ref{question 3} (i) was solved by McIntyre (in the cases of $2$-sphere and closed oriented surfaces of genus greater than $1$) (\hyperref[McIntyre]{[11]}).
There are some other works related to these questions, for example, D.Calegari discussed the rationally bounding (\hyperref[Calegari]{[3]}).

In this paper,
we develop a new way for Question \ref{question 3} (ii),
which is also generalized to address Question \ref{question 5} in the forthcoming paper.
We determine the topological equivalence classes of extensions (to surfaces) of an arbitrary immersed circle in a closed oriented surface.
Different from previous works dealing with different cases in many years,
our treatment is straightforward. 
For a closed oriented surface $\Sigma$ and a (homologically trivial) immersion $f: S^{1} \to \Sigma$,
fix a nonnegative integer $n$,
our main result is to prove a bijection between 
$\{F \mid F$ $is$ $a$ $class$ $of$ $topologically$ $equivalent$ $extensions$ $(to$ $surface)$ $of$ $f,$ $\min_{p \in \Sigma} \#(F^{-1}(p)) = n\}$ and a determined set (obtained by finite operations).
Apply to Question \ref{question 2} (or Question \ref{question 1}),
our conclusion provides the immersed planar circle with a bijection between topological equivalence classes of extensions to surfaces (or disks) and a determined set (obtained by finite operations).

In the forthcoming paper,
we address Question \ref{question 5}.
For an immersion of a closed oriented surface into $\mathbb{R}^{3}$,
we prove a bijection between topological equivalence classes of extensions (to $3$-manifolds) and a set obtained by finite operations.

\subsection{Notations}\

In this paper,
a \emph{graph} means a vertices-edges pair and together with a realization of topological space (an embedded graph). 
If $G$ is a graph,
$v \in V(G)$,
$deg_G(v)$ denotes the degree of $v$ in $G$,
and we say $v$ is a \emph{leaf} of $G$ if $deg_G(v) = 1$.
\emph{Cutting off a set from the space} means to delete the set from the space and do a path compactification.
\emph{Extensions} of an immersed circle means maps of connected compact oriented surfaces with boundary equal to the immersed circle if not otherwise mentioned.
The \emph{equivalence classes} of the extensions imply the topological equivalence classes of the extensions to surfaces if not otherwise mentioned.
A \emph{surface} implies a connected surface if not otherwise mentioned.

\subsection{Main results}\

We explain some basic ingredients first.
Let $\Sigma$ be a surface and $f: S^{1} \to \Sigma$ a homologically trivial immersion.
Assume $\{A_1,\ldots,A_n\}$ is the set of the components of $\Sigma \setminus f(S^{1})$.
$\psi: \{A_1,\ldots,A_n\} \to \mathbb{Z}_{\geqslant 0}$ is a \emph{normal numbering} of $f$ if: at each segment of $f(S^{1})$,
the image of the component in its left is $1$ greater than the image of the component in its right.
Actually,
if $F: \Sigma_0 \to \Sigma$ is an extension of $f$,
then there is a normal numbering $\psi_F$ sending $A_i$ to the cardinality of $F^{-1}(x)$ ($\forall x \in A_i$), $\forall i \in \{1,2,\ldots,n\}$.
We say $\psi_F$ is the normal numbering \emph{given by} $F$,
and $F$ is \emph{related to} $\psi_F$.
Fix a normal numbering $\psi$,
let $D_i(f,\psi)$ ($0 \leqslant i \leqslant \max \psi$) 
be $\bigcup_{k \in \{1,2,\ldots,n\}, \psi(A_k) \geqslant i} \overline{A_k}$ (where $\overline{A_k}$ is the closure of $A_k$).
More generally,
we can extend above definitions to the case of a homologically trivial immersion $f: S^{1} \coprod \ldots \coprod S^{1} \to \Sigma$.
The definitions basically follow from \hyperref[Frisch]{[8]},
\hyperref[McIntyre, Cairns]{[10]}, \hyperref[McIntyre]{[11]}.

Refer to \hyperref[Ezell, Marx]{[5]},
$g: M \to N$ ($M,N$ are compact oriented surfaces, $M$ may be disconnected) is a \emph{polymersion} (Definition \ref{polymersion})
if $g$ is topologically equivalent to $z \mapsto z^{k}$ ($k \in \mathbb{Z}_{\geqslant 1}$) at each $z$ in the interior of $M$, 
and there exists an open set $U$ (if $\partial M \ne \emptyset$)
such that $\partial M \subseteq U$ and
$g \mid_{U}$ is an immersion.

Given a polymersion $g: \Sigma_0 \to \Sigma$,
$\Sigma_0$ is a compact orientable surface ($\Sigma_0$ may disconnected) and $\Sigma$ is a closed oriented surface.
The \emph{cancellation operation} (Definition \ref{cancellation operation})
$(g,\Sigma_0) \leadsto (g_1,\Sigma_1)$
(where $\Sigma_1$ is a compact orientable surface that may be disconnected and
$g_1: \Sigma_1 \to \Sigma$ a polymersion) 
is a transformation defined by some \emph{cancellable domains} (Definition \ref{cancellable components})
$A_1,\ldots,A_n \subseteq \Sigma_0$.
More precisely,
we delete $\stackrel{\circ}{A_1},\ldots,\stackrel{\circ}{A_n}$ from $\Sigma_0$ and identify the segments which have same images.
If the cancellation is \emph{regular} (Definition \ref{regular} (ii)),
then it induces an embedded graph in $\Sigma_1$.

Since the cancellation operation $(g,\Sigma_0) \leadsto (g_1,\Sigma_1)$ depends on the choice of cancellable domains,
we give a way to construct cancellable domains in $2$ cases (see more contents in Subsection \ref{subsection 4.3}).
Case \ref{case 1} yields an embedded graph $G$ (Definition \ref{case 1 graph}),
and $G$ constructs the cancellable domains (Lemma \ref{case 1 cancellation}).
Then $G$ defines a cancellation $(g,\Sigma_0) \stackrel{G}{\leadsto} (g_1,\Sigma_1)$.
Case \ref{case 2} gives an embedded graph $G^{'} \subseteq \Sigma_0$
and yields an embedded graph $G$ (Definition \ref{case 2 graph}).
$G$ and $g(G^{'})$ constructs the cancellable domains (Lemma \ref{case 2 cancellation}).
Then $(G,g(G^{'}))$ defines a cancellation $(g,\Sigma_0) \stackrel{(G,g(G^{'}))}{\leadsto} (g_1,\Sigma_1)$.
More details are given in Section \ref{section 4}.

Fix an immersion $f: S^{1} \to \Sigma$ (where $\Sigma$ is a closed oriented surface) and $\psi$ a normal numbering of $f$.
Given some embedded graphs under certain conditions,
we define a map to extend $f$,
which is a \emph{polymersion} of a surface that may be \emph{disconnected}.
This is called an \emph{inscribed map} of $(f,\psi)$ (Definition \ref{inscribed map}).
We consider a collection of sets of (embedded) graphs to establish the inscribed maps to be \emph{immersions} of \emph{connected surfaces} (then they are extensions of $f$),
and such that different sets establish inequivalent extensions of $f$.

An \emph{inscribed set} $\zeta$ is a finite set obtained by $f$ and $\psi$.
$\forall \{(\tilde{H}_1,H_1),\ldots,(\tilde{H}_n,H_n)\} \in \zeta$,
$\{(\tilde{H}_1,H_1),$ $\ldots,(\tilde{H}_n,H_n)\}$ is called \emph{good} if $H_1 = \emptyset$,
and we denote by $I(\zeta)$ the set of good elements of $\zeta$.
$\forall \{(\tilde{H}_1,H_1),\ldots,(\tilde{H}_n,H_n)\} \in I(\zeta)$,
$H_2,\ldots,H_n$ establish an inscribed map (said to be an inscribed map related to $\{(\tilde{H}_1,H_1),\ldots,(\tilde{H}_n,H_n)\}$),
and we can verify that such inscribed maps are extensions of $f$.
We develop a map $i: I(\zeta) \to E(f,\psi)$ (where $E(f,\psi)$ is the set of equivalence classes of extensions of $f$ related to $\psi$) sending each $X \in I(\zeta)$ to the equivalence class of the inscribed map of $(f,\psi)$ related to $X$.

Lemma \ref{inequivalent} proves
the inscribed maps related to different elements of $I(\zeta)$ are inequivalent,
hence $i: I(\zeta) \to E(f,\psi)$ is injective.
Proposition \ref{surjective} proves
an arbitrary extension $g: \Sigma_0 \to \Sigma$ of $f$ is an inscribed map related to an element $\{(\tilde{H}_1,H_1),\ldots,(\tilde{H}_n,H_n)\} \in I(\zeta)$,
hence $i: I(\zeta) \to E(f,\psi)$ is surjective.
We prove this 
by providing $g$ with a sequence of cancellation operations 
$(g,\Sigma_0) \stackrel{\tilde{H}_n}{\leadsto} (g_n,\Sigma_n) \stackrel{(\tilde{H}_{n-1},H_n)}{\leadsto} 
(g_{n-1},\Sigma_{n-1}) \stackrel{(\tilde{H}_{n-2},H_{n-1})}{\leadsto} \ldots
\stackrel{(\tilde{H}_{1},H_2)}{\leadsto} (g_1,\Sigma_1)$,
where $g_1$ is an embedding.

As a result,
we establsh $i: I(\zeta) \to E(f,\psi)$,
the bijection between $I(\zeta)$ and the set of equivalence classes of extensions of $f$ related to $\psi$:

\begin{thm}\label{main}
	For a closed oriented surface $\Sigma$,
	let $f: S^{1} \to \Sigma$ be a homologically trivial immersion 
	and $\psi$ a normal numbering of $f$.
	Fix $\zeta$ an inscribed set of $(f,\psi)$.
	Then there is a bijection between $I(\zeta)$ and the set of equivalence classes of extensions of $f$ related to $\psi$
\end{thm}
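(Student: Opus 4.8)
The plan is to realize the theorem as the single assertion that the map
$i\colon I(\zeta) \to E(f,\psi)$ described in the introduction is a bijection, and to obtain this by verifying in turn that $i$ is well-defined, injective, and surjective. For well-definedness I would start from a good element $X = \{(\tilde{H}_1,H_1),\ldots,(\tilde{H}_n,H_n)\} \in I(\zeta)$, so that $H_1 = \emptyset$, and recall that the data $H_2,\ldots,H_n$ determine an inscribed map of $(f,\psi)$ via Definition \ref{inscribed map}. By construction an inscribed map is only a \emph{polymersion} of a possibly disconnected surface, so the content here is that goodness forces this polymersion to be an \emph{immersion of a connected surface} with boundary $f$, hence a genuine extension of $f$. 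I would pair this with a check that the resulting class in $E(f,\psi)$ is independent of the incidental choices made in the construction, so that $i(X)$ is unambiguously an equivalence class of extensions related to $\psi$.

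Injectivity is then exactly the content of Lemma \ref{inequivalent}: distinct good elements of $I(\zeta)$ give rise to inequivalent inscribed maps, so no two elements of $I(\zeta)$ share an image under $i$. I would simply invoke this lemma.

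Surjectivity, supplied by Proposition \ref{surjective}, is the substantive half. Given an arbitrary extension $g\colon \Sigma_0 \to \Sigma$ related to $\psi$, the task is to produce a good element of $I(\zeta)$ whose inscribed map is equivalent to $g$. The mechanism is the reducing sequence of cancellation operations
\[
(g,\Sigma_0) \stackrel{\tilde{H}_n}{\leadsto} (g_n,\Sigma_n) \stackrel{(\tilde{H}_{n-1},H_n)}{\leadsto} \cdots \stackrel{(\tilde{H}_{1},H_2)}{\leadsto} (g_1,\Sigma_1),
\]
in which each step deletes a cancellable domain (constructed through Case \ref{case 1} and Case \ref{case 2}) and records the graph data $(\tilde{H}_i,H_{i+1})$ controlling the re-gluing, until one reaches an embedding $g_1$. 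Reading these graphs off in reverse order assembles an element of $I(\zeta)$; I would then confirm that this element is good, with $H_1=\emptyset$ reflecting that the terminal stage $g_1$ is an embedding, and that the inscribed map it determines reconstructs $g$ up to equivalence, i.e.\ that inscription and cancellation are mutually inverse at the level of equivalence classes.

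The main obstacle is precisely this surjectivity step. The delicate points are: every extension must admit such a sequence of \emph{regular} cancellations terminating in an embedding, which requires that the cancellable domains produced in Cases \ref{case 1} and \ref{case 2} exhaust all the ways an extension can fail to be an embedding; and regularity must hold at each stage (Definition \ref{regular} (ii)) so that the induced embedded graphs are exactly the objects indexing $I(\zeta)$, guaranteeing that the extracted data land in the prescribed set $\zeta$ rather than in some larger uncontrolled collection. Granting Lemma \ref{inequivalent} and Proposition \ref{surjective}, the theorem follows at once, since an injective and surjective map is a bijection.
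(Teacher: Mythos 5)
Your proposal matches the paper's own proof: Section \ref{section 6} establishes Theorem \ref{main} exactly by combining Lemma \ref{realization} (well-definedness of $i$), Lemma \ref{inequivalent} (injectivity), and Proposition \ref{surjective} (surjectivity via the reducing sequence of cancellations terminating in an embedding). Your identification of surjectivity as the substantive step, and of the regularity and exhaustiveness of the cancellable domains as the delicate points, is also consistent with where the paper concentrates its effort.
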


\subsection{Organization}\

Section \ref{section 2} gives basic ingredients.
Section \ref{section 3} introduces the cancellation operation $(g,\Sigma_0) \leadsto (g_1,\Sigma_1)$,
and Section \ref{section 4} provides the ways to yield a graph $G$ and construct the cancellable domains,
then determine the cancellation operation
$(g,\Sigma_0) \stackrel{G}{\leadsto} (g_1,\Sigma_1)$ (Case \ref{case 1}) or $(g,\Sigma_0) \stackrel{(G,g(G^{'}))}{\leadsto} (g_1,\Sigma_1)$ (Case \ref{case 2}).
Section \ref{section 5} defines inscribed maps,
and introduces the inscribed set $\zeta$ and $I(\zeta) \subseteq \zeta$.
Section \ref{section 6} proves the Theorem \ref{main}.
Section \ref{section 7} summaries of the general cases.
Section \ref{section 8} provides some examples.
	
	\section{Preliminaries}\ \label{section 2}

	This section is to introduce the basic ingredients:
	\emph{Numberings} (Definition \ref{numbering}) 
	and \emph{Gaussian circles} (Definition \ref{Gaussian circle}).

	\subsection{Numbering}\ \label{subsection 2.1}
	
	Fix a surface $\Sigma$ and an immersion $f: S^{1} \to \Sigma$.
	If there exists an extension $F: \Sigma_0 \to \Sigma$,
	then $f(S^{1})$ is homologically trivial.
	In \hyperref[McIntyre]{[11}, Section $1$]
	the numbering of preimages of points in $\Sigma \setminus f(S^{1})$ satisfies:
	at every segment of the immersed circle,
	the number to its left is $1$ greater than the number to its right.
	
	\begin{defn}[Numbering]\label{numbering}\rm
		For a closed oriented surface $\Sigma$,
		let $f: S^{1} \to \Sigma$ be a homologically trivial immersion.
		Assume $\{A_1,\ldots,A_n\}$ is the set of the components of $\Sigma \setminus f(S^{1})$.
		$\psi: \{A_1, \ldots, A_n\} \to \mathbb{Z}$ is a \emph{numbering} if:
		at every segment of $f(S^{1})$,
		the image of the component in its left is $1$ greater than the image of the component in its right.
		A numbering is \emph{normal} if its range is a subset of $\mathbb{Z}_{\geqslant 0}$.
	\end{defn}

The name ``normal numbering'' follows from \hyperref[Frisch]{[8}, Definition $1.2.1$].

\begin{lm}\label{lemma of numbering}
	For a closed oriented surface $\Sigma$,
	let $f: S^{1} \to \Sigma$ be a homologically trivial immersion.
	The (normal) numberings of $f$ exist uniquely up to choose its minimum image to be a (nonnegative) integer constant.
\end{lm}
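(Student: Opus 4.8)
The plan is to prove existence by a signed-crossing construction and uniqueness by showing that the difference of two numberings is locally, hence globally, constant. Throughout, I would fix a base region $A_{i_0}$ among the components $\{A_1,\ldots,A_n\}$ of $\Sigma \setminus f(S^1)$ and assign it a prescribed integer value $c_0$ (a prescribed nonnegative integer in the normal case).

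For existence, I would define the value of an arbitrary region $A_j$ as follows. Choose points $p_0 \in A_{i_0}$, $p_j \in A_j$, and a smooth path $\gamma$ from $p_0$ to $p_j$ that is transverse to $f(S^1)$ and misses all nodes (such a path exists since $\Sigma$ is connected and any path may be perturbed into general position with respect to the $1$-complex $f(S^1)$). Set $\psi(A_j) := c_0 + \iota(\gamma)$, where $\iota(\gamma)$ is the signed count of crossings of $\gamma$ with $f(S^1)$, counting $+1$ for each crossing from the right side of $f(S^1)$ to the left and $-1$ for each crossing from left to right. The essential point is that $\psi(A_j)$ is independent of $\gamma$: given two such paths $\gamma_1, \gamma_2$, the concatenation $\gamma_1 * \overline{\gamma_2}$ is a loop $\delta$, and $\iota(\gamma_1) - \iota(\gamma_2) = \iota(\delta)$ equals the algebraic intersection number $\delta \cdot f(S^1)$ in the closed oriented surface $\Sigma$. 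Since $f(S^1)$ is homologically trivial, $[f(S^1)] = 0$ in $H_1(\Sigma; \mathbb{Z})$, so $\delta \cdot f(S^1) = 0$ for every loop $\delta$; hence $\iota(\gamma_1) = \iota(\gamma_2)$ and $\psi$ is well defined. That $\psi$ satisfies the local condition of Definition \ref{numbering} is then immediate: if $A_j$ lies to the left and $A_k$ to the right of a common segment, extending a path reaching $A_k$ by a single transverse right-to-left crossing reaches $A_j$ and raises $\iota$ by $1$, so $\psi(A_j) = \psi(A_k) + 1$.

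For uniqueness, let $\psi_1, \psi_2$ be two numberings and put $\phi := \psi_1 - \psi_2$. Along each segment both numberings increase by $1$ from the right region to the left region, so $\phi$ takes equal values on the two regions adjacent to any segment. Since $\Sigma$ is connected, any two regions are joined by a transverse path crossing finitely many segments, so the dual adjacency graph of the regions is connected; it follows that $\phi$ is constant, i.e. $\psi_1 - \psi_2$ is a fixed integer. Consequently the numberings form a $\mathbb{Z}$-torsor $\{\psi_0 + c : c \in \mathbb{Z}\}$, and fixing the minimum image to a prescribed integer pins down a unique numbering. A numbering is normal exactly when its minimum image is $\geqslant 0$; thus the normal numberings are precisely those $\psi_0 + c$ with $c \geqslant -\min \psi_0$, and fixing the minimum image to a prescribed nonnegative integer determines a unique normal numbering. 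This yields existence and uniqueness up to the stated normalization.

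The main obstacle is the well-definedness of $\psi$ in the existence step, which is exactly where homological triviality is indispensable: without $[f(S^1)] = 0$ the signed crossing count would depend on the chosen path, and no consistent numbering could exist. The only other care needed is the routine transversality argument guaranteeing that the paths (and the single extending crossing used to verify the local condition) can be taken in general position with respect to $f(S^1)$, avoiding the finitely many nodes.
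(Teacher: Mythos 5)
Your argument is correct and complete. Note that the paper does not actually give a proof here: it simply writes ``Basically follows from [10, Lemma 2]'' (McIntyre--Cairns), so you have supplied the details that the paper outsources to that reference, and your route is the standard one that the cited lemma rests on. The two essential points are both handled properly: well-definedness of the signed crossing count $\iota(\gamma)$ uses exactly that $f_*[S^1]=0$ in $H_1(\Sigma;\mathbb{Z})$, so that the algebraic intersection number of any loop with $f(S^1)$ vanishes (this is where homological triviality is genuinely needed, and indeed the lemma would be false without it, e.g.\ for a nonseparating simple closed curve); and uniqueness follows from the difference of two numberings being constant across every segment together with connectedness of the dual adjacency graph of the complementary regions. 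The only cosmetic remark is that the statement's phrase ``exist uniquely up to choose its minimum image'' is exactly your $\mathbb{Z}$-torsor observation, so your final paragraph matches the intended normalization; nothing further is required.
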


\begin{proof}
	Basically follows from \hyperref[McIntyre, Cairns]{[10}, Lemma $2$].
\end{proof}

    Fix $F: \Sigma_0 \to \Sigma$ an extension of $f$.
	There exists a normal numbering $\psi_F: \{A_1,\ldots,A_n\} \to \mathbb{Z}_{\geqslant 0}$
	sending $A_i$ to the cardinality of $F^{-1}(x)$ ($\forall x \in A_i$), $\forall i \in \{1,2,\ldots,n\}$.
	We say $F$ is \emph{related to} $\psi_F$,
	and $\psi_F$ is the normal numbering \emph{given by} $F$.
	The extensions related to different normal numberings are inequivalent  (\hyperref[Frisch]{[8}, Lemma 5.3.2]).
	If $\Sigma$ is a $2$-sphere or a closed oriented surface of genus greater than one,
	\hyperref[McIntyre]{[11]} showed the maximum image of $\psi_F$ has an upper bound if $f$ a filling immersed curve,
	and solved Question \ref{question 3} (i) (the existence problem) in these cases.

More genreally,
we can extend the numbering to the cases of $\Sigma = \mathbb{R}^{2}$ or $\Sigma$ is a compact orientable surface with nonempty boundary.
Note that arbitrary extensions of $f$ (if exist) are related to a unique normal numbering $\psi_f$ in these cases (if $\Sigma = \mathbb{R}^{2}$, $\psi_f$ sends the components of $\mathbb{R}^{2} \setminus f(S^{1})$ to winding numbers).
Also,
we can extend the numbering to the case of a homologically trivial immersion $f: S^{1} \coprod \ldots \coprod S^{1} \to \Sigma$ (\hyperref[Frisch]{[8}, Lemma 1.2.5]).

The topology type of an immersed surface bounded by the immersed circle is determined by the normal numbering given by it (note that some changes need to be adopted for $3$-dimensional cases).
This basically follows from \hyperref[McIntyre]{[11}, Lemma $3$].
In the remainder of this paper,
we state with fixing a normal numbering of the immersed circle,
and we will not discuss the topology types specially.
	
	\subsection{Gaussian circles}\ \label{subsection 2.2}
	
	\begin{figure}\label{Milnor curve}
		\includegraphics[width=0.33\textwidth]{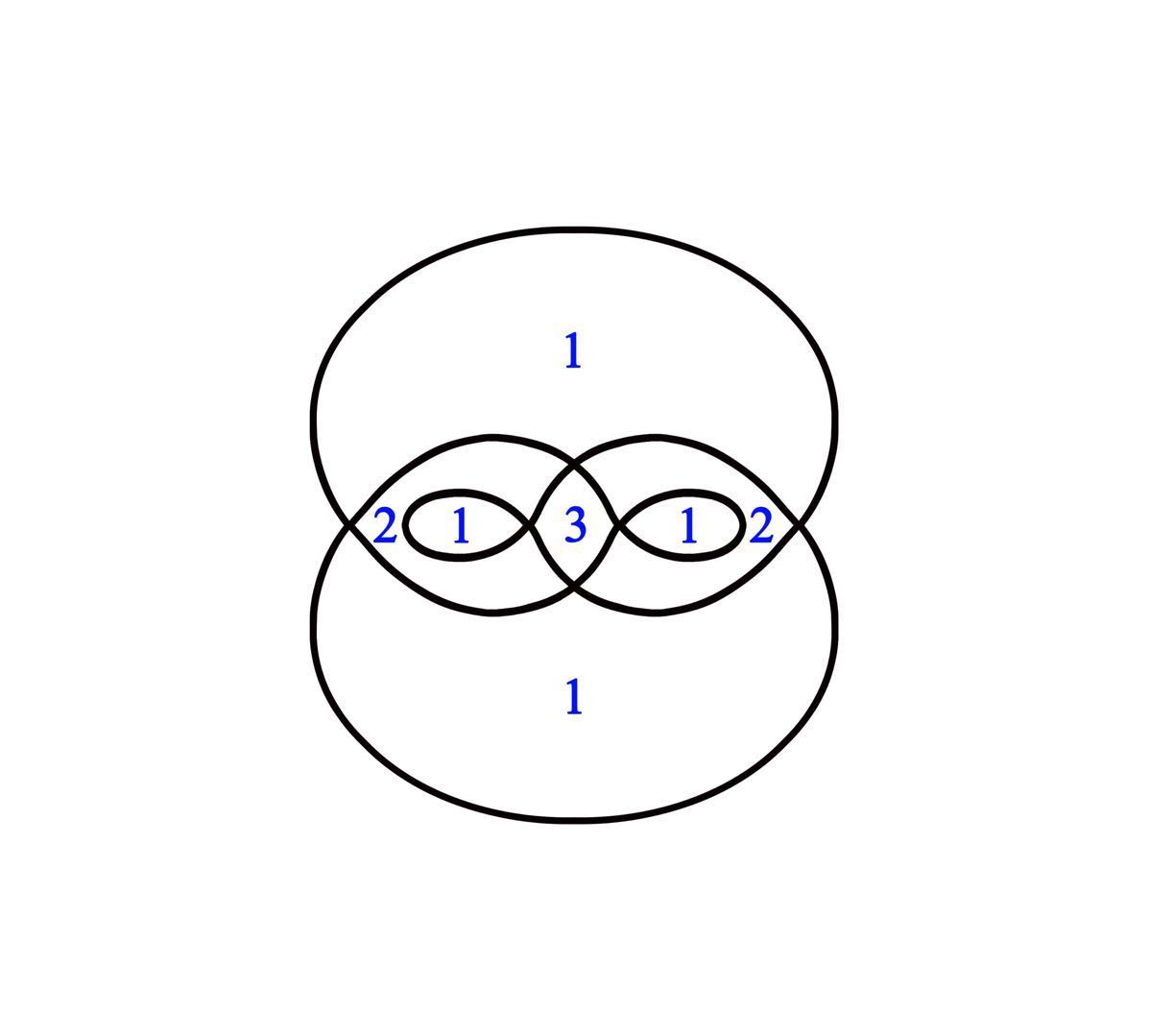}
		\includegraphics[width=0.33\textwidth]{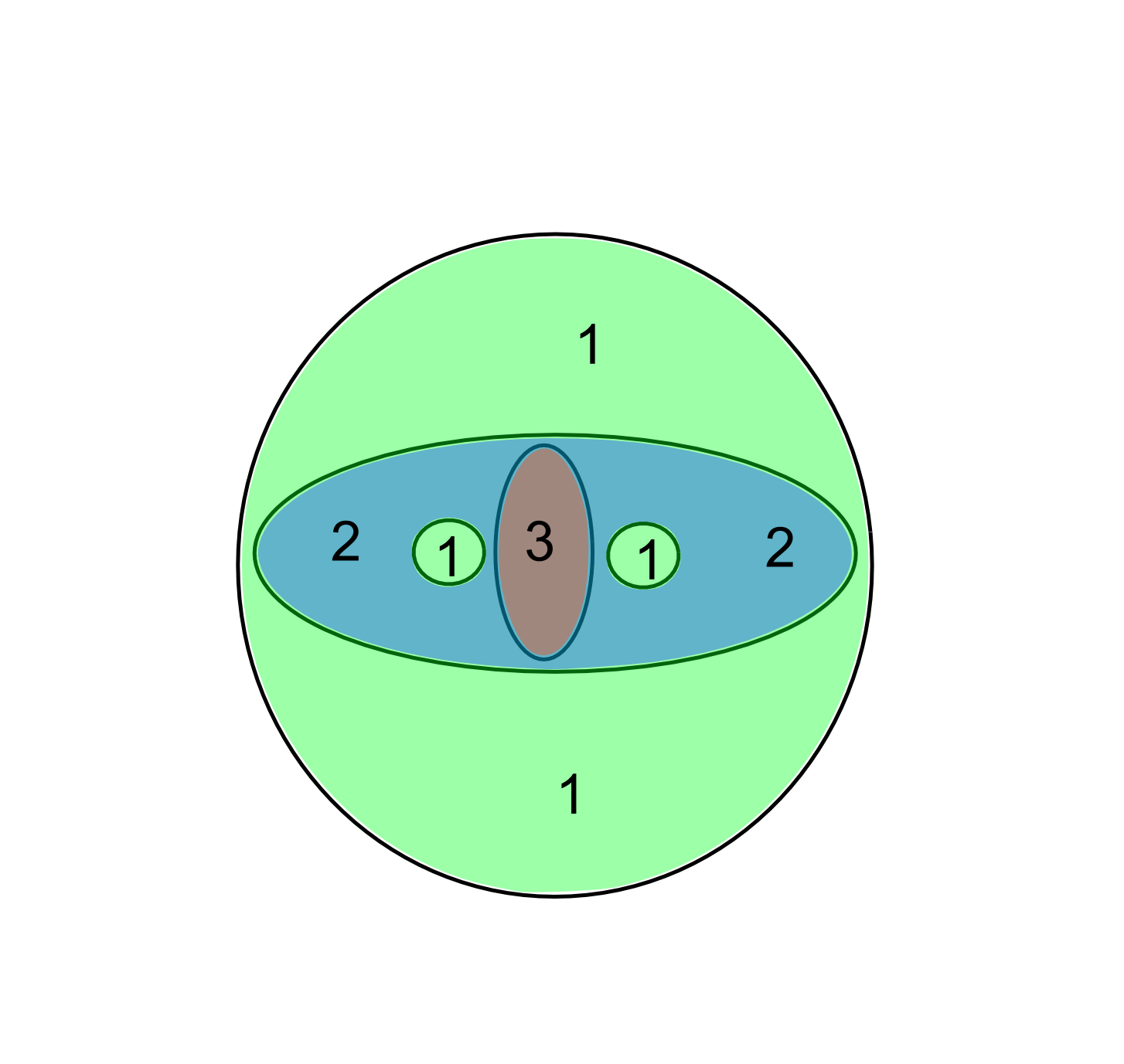}
		\includegraphics[width=0.33\textwidth]{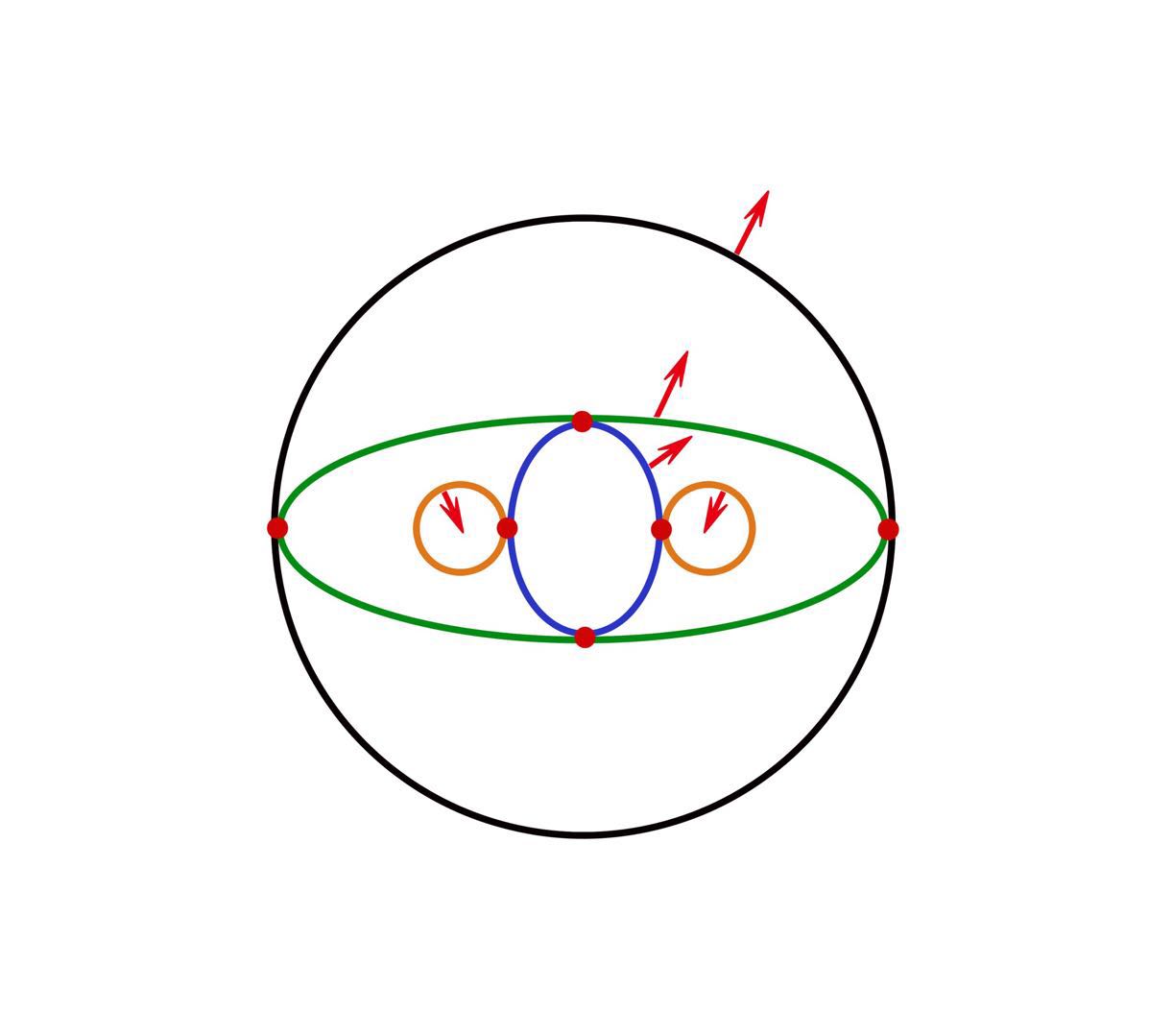}
		\caption{The Gaussian circles of the Milnor curve. (see \hyperref[Poenaru]{[12]})}
	\end{figure}

    \begin{defn}[Gaussian circles]\label{Gaussian circle}\rm
	For a closed oriented surface $\Sigma$,
	let $f: S^{1} \to \Sigma$ be a homologically trivial immersion
	and $\psi$ a normal numbering of $f$.
	Let $n = \max \psi$,
	$m = \min \psi$. 
	
	(i)
	Assume $\{A_1,\ldots,A_n\}$ is the set of the components of $\Sigma \setminus f(S^{1})$.
	Let $D_i(f,\psi)$ be the closure of $\bigcup_{k \in \{1,2,\ldots,n\}, \psi(A_k) \geqslant i} A_k$,
	$\forall i \in \{1,2,\ldots,n\}$.
	
	(ii)
	$\forall i \in \{m,m+1,\ldots,n\}$,
	each component of $\partial D_i(f,\psi)$ is an embedded circle.
	Call these circles the \emph{Gaussian circles} of $f$. 
	
	(iii)
	Let $V_i(f,\psi) = \partial D_i(f,\psi) \cap \partial D_{i-1}(f,\psi)$,
	$\forall m+1 \leqslant i \leqslant n$, 
	and $V_1(f,\psi) = \ldots = V_{m}(f,\psi) = \emptyset$. 
\end{defn}

A Gaussian circle is composed of segments of $f(S^{1})$ piecewise,
and $\bigcup_{i=m+1}^{n}V_i(f,\psi)$ is the set of nodes in $f(S^{1})$.
Similar to the numbering,
we can extend Definition \ref{Gaussian circle} to the case of a homologically trivial immersion $f: S^{1} \coprod \ldots \coprod S^{1} \to \Sigma$.

In \hyperref[McIntyre]{[11]},
$D_i(f,\psi)$ is denoted by $S_i$. The definition of Gaussian circles basically follows from \hyperref[Ezell, Marx]{[5}, Section $2$].
Note that our definition differs a little from \hyperref[Ezell, Marx]{[5}, Section $2$]:

    \begin{remark}\label{explanation of Gaussian circle}\rm
    	In \hyperref[Ezell, Marx]{[5]}, 
    	Gaussian circles are some disjoint embedded circles obtained by separating and smoothing the self-intersections of the immersed circle.
    	But we allow them to intersect at nodes.
    \end{remark}

\begin{figure}\label{cancellation picture 1}
	\centering 
	\includegraphics[width=0.45\textwidth]{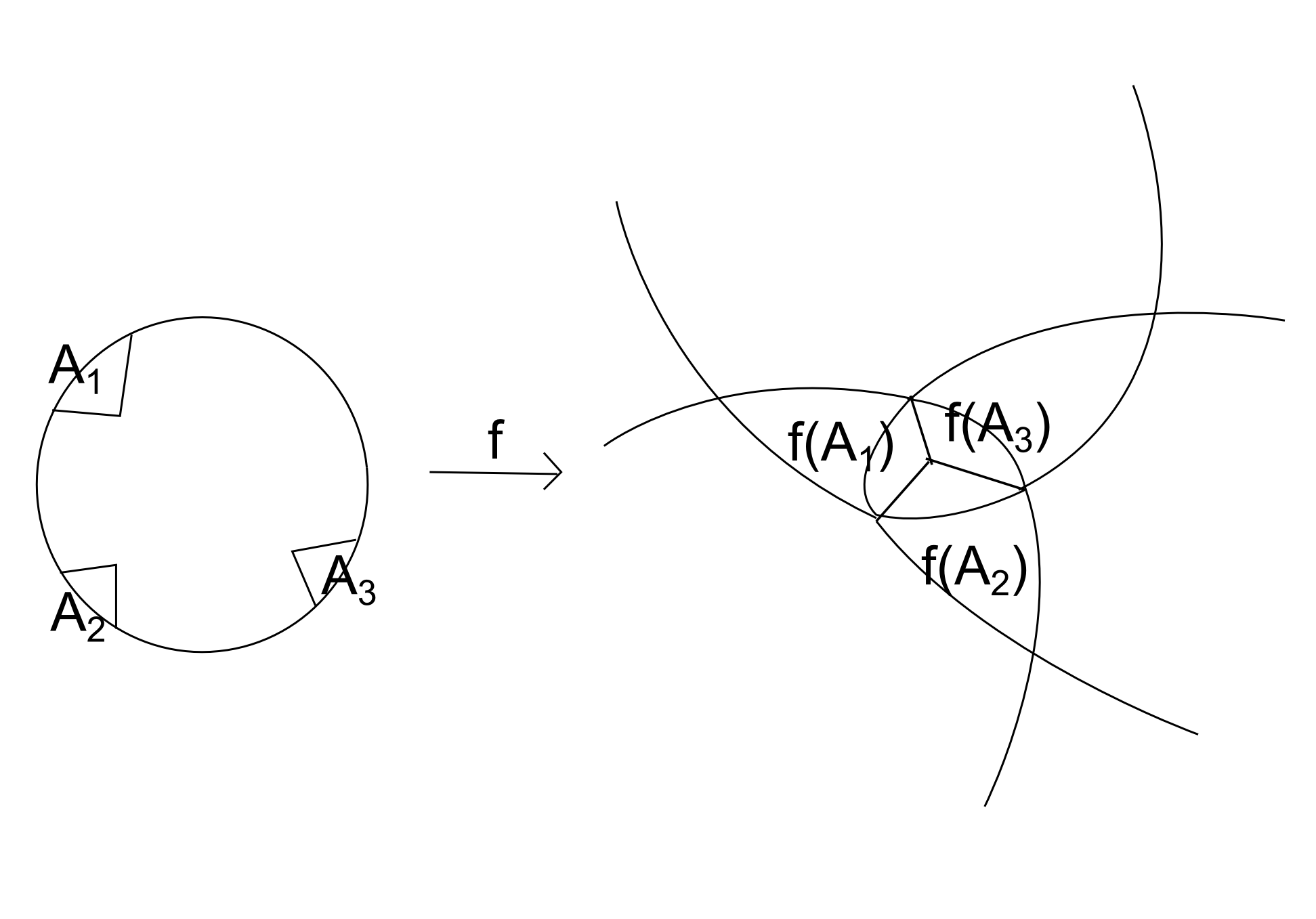}
	(a)
	\includegraphics[width=0.45\textwidth]{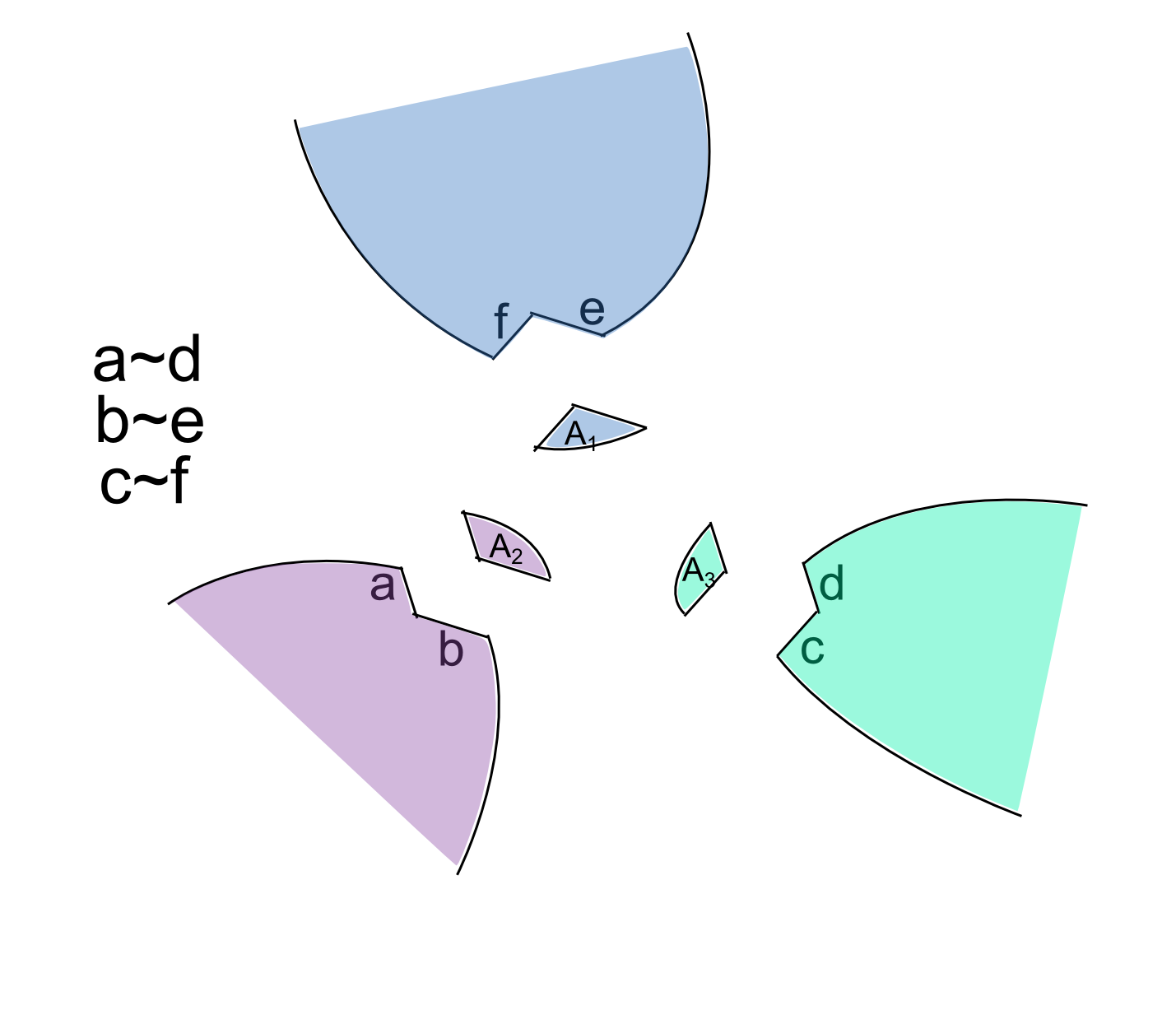}
	(b)
	\includegraphics[width=0.6\textwidth]{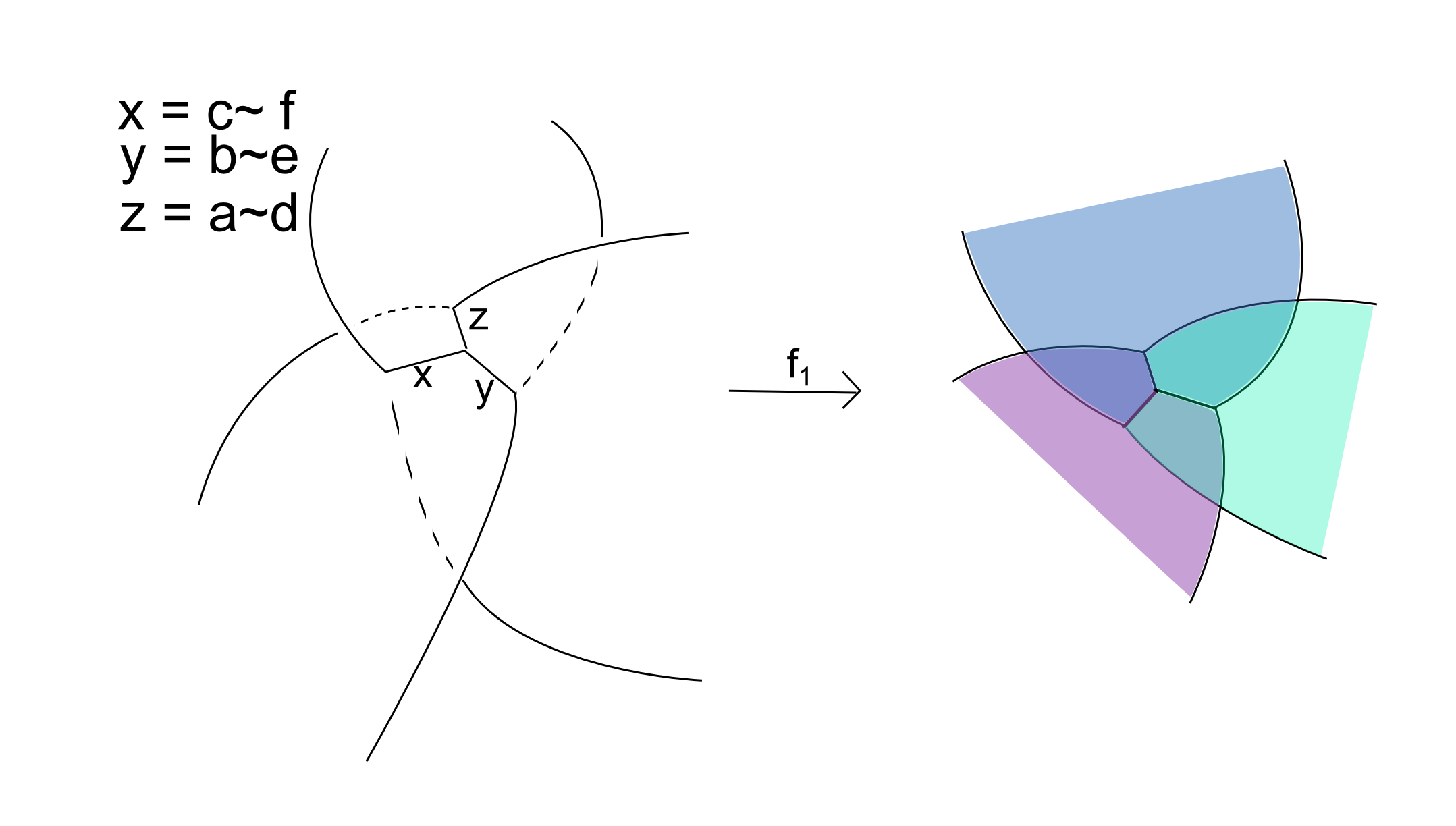}
	(c)
	\includegraphics[width=0.6\textwidth]{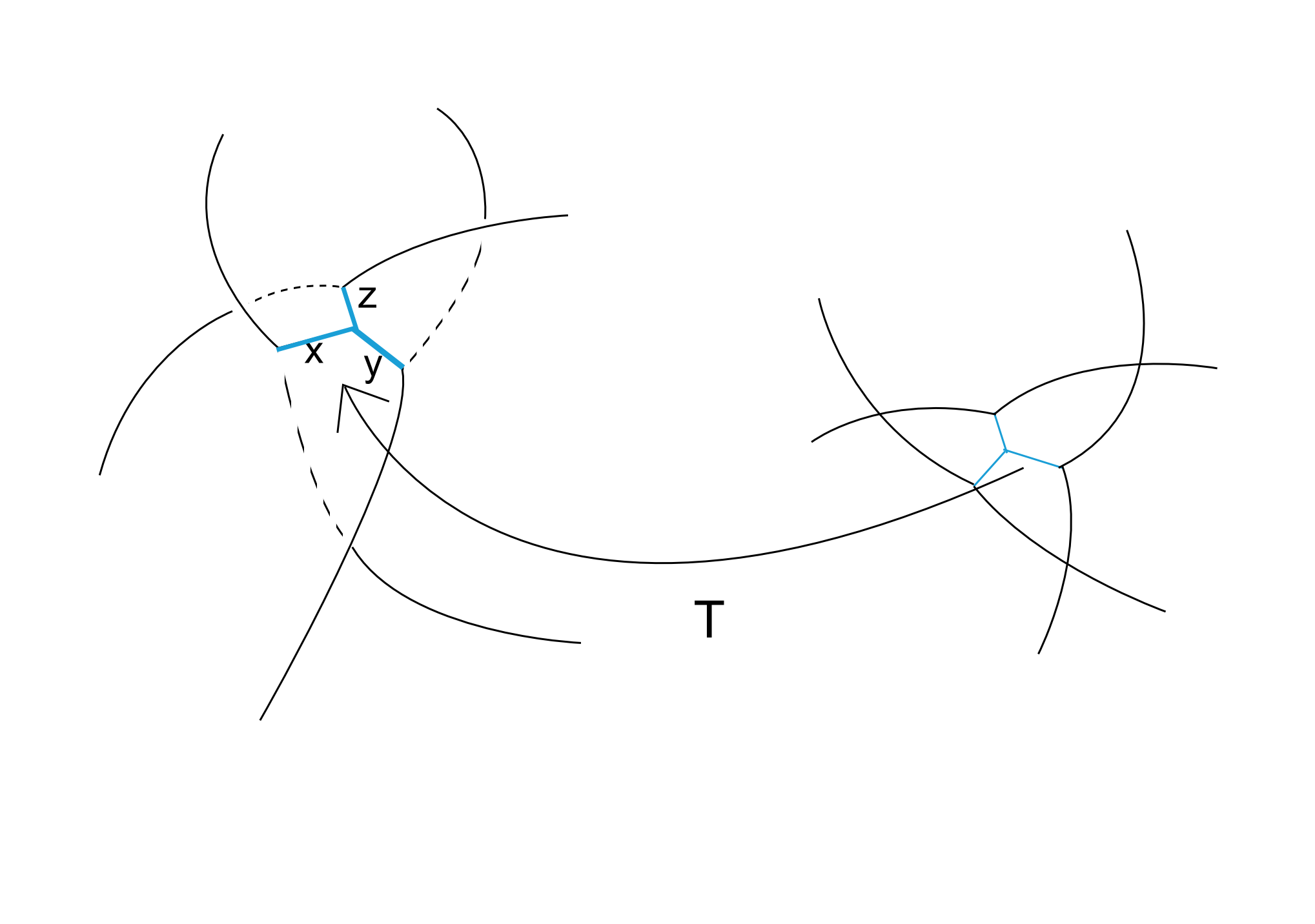}
	(d)
	\caption{the cancellation of $\{A_1,A_2,A_3\}$.}
\end{figure}
	
	\section{The cancellation operation}\ \label{section 3}
	
	This section is to
	introduce an operation to transform a polymersion (Definition \ref{polymersion}) of a surface into a surface.
	The intention is to simplify the polymersion.
	More precisely, given some \emph{cancellable domains} (Definition \ref{cancellable components}),
	the \emph{cancellation operation} (Definition \ref{cancellation operation} (ii)) transforms the polymersion by deleting their interior and identifying the segments with same images.
	This gives an \emph{associated map} (an embedding of a graph) if the cancellation is \emph{regular} (Definition \ref{regular}).
	
	Figure \ref{cancellation picture 1} provides an example of the cancellation operation.
	
	Definition \ref{polymersion} and Remark \ref{polymersion remark} basically follow from \hyperref[Ezell, Marx]{[5}, Section $1$].

\begin{defn}\rm\label{polymersion}
Let $M, N$ be compact orientable surfaces ($M$ may be disconnected).
$g: M \to N$ is a \emph{polymersion} if:

$\bullet$
At each $z \in \stackrel{\circ}{M}$ (where $\stackrel{\circ}{M}$ means the interior of $M$),
$g$ is topologically equivalent to $z \mapsto z^{k}$ ($k \in \mathbb{Z}_{\geqslant 1}$).

$\bullet$
If $\partial M \ne \emptyset$,
then there exists an open set $U$ such that
$\partial M \subseteq U$
and $g \mid_{U}$ is an immersion.
\end{defn}

Similar to immersions,
we will always assume a polymersion $g: M \to N$ such that the interior of $M$ is mapped to the left side of $g(\partial M)$ by $g$ and assume $g \mid_{\partial M}$ is a normal immersion,
if $\partial M \ne \emptyset$.

\begin{remark}\rm\label{polymersion remark}
	For a polymersion $g: M \to N$, 
	a point $z \in \stackrel{\circ}{M}$ of local form $z \mapsto z^{n}$ ($n \geqslant 2$) is called a \emph{critical point} of \emph{multiplicity} $n-1$,
	and its image is called a \emph{branch point}.
	
	In this paper, 
	if $M, N$ are compact orientable surfaces ($M$ may be disconnected) and $g: M \to N$ is a polymersion,
	we will always assume
	there is no branch point in $g(\partial M)$,
	and assume different critical points of $g$ have different images if not otherwise mentioned.
	The \emph{index} of a branch point is the multiplicity of the critical point mapped to it.
\end{remark}

Recall that Definition \ref{Gaussian circle} can be extended to the case of a homologically trivial immersion $f: S^{1} \coprod \ldots \coprod S^{1} \to \Sigma$,
and $D_{\max \psi}(g(\partial \Sigma_0), \psi)$ is independent of the normal numbering $\psi$.

\begin{defn}\label{R(g)}\rm
For a closed oriented surface $\Sigma$
and a compact orientable surface $\Sigma_0$ ($\Sigma_0$ may be disconnected),
let $g: \Sigma_0 \to \Sigma$ be a polymersion.
Let $R(g)$ be $D_{\max \psi}(g(\partial \Sigma_0), \psi)$ ($\psi$ is an arbitrary normal numbering) if $\partial \Sigma_0 \ne \emptyset$,
and $R(g) = \Sigma$ if $\partial \Sigma_0 = \emptyset$.
\end{defn}

Note that $R(g)$ lies in the left of each segment in $\partial R(g) \subseteq g(\partial \Sigma_0)$.

\begin{defn}[Cancellable domains]\label{cancellable components}\rm
For a closed oriented surface $\Sigma$
and a compact orientable surface $\Sigma_0$ ($\Sigma_0$ may be disconnected),
let $g: \Sigma_0 \to \Sigma$ be a polymersion.
Assume $A_1, A_2, \ldots, A_n \subseteq \Sigma_0$ are closed domains such that
$\stackrel{\circ}{A_1}, \stackrel{\circ}{A_2}, \ldots, \stackrel{\circ}{A_n}$ are homeomorphically embedded into $R(g)$ by $g$.
$A_1, A_2, \ldots, A_n$ are called \emph{cancellable} if:

$\bullet$
There exists an embedded graph $G \subseteq R(g)$ such that $G \cap \partial R(g) = \{v \mid v \in V(G), deg_G(v) = 1\}$,
and
$\{g(\stackrel{\circ}{A_1}), g(\stackrel{\circ}{A_2}), \ldots, g(\stackrel{\circ}{A_n})\}$ 
is the set of the components of $R(g) \setminus G$.
($G$ is called the graph \emph{associated} to $A_1, A_2, \ldots, A_n$)

$\bullet$
$(g \mid_{A_i})^{-1}(g(A_i)\cap \partial R(g)) \subseteq \partial \Sigma_0$ if $g(A_i) \cap \partial R(g) \ne \emptyset$,
$\forall i \in \{1,2,\ldots,n\}$.
\end{defn}

\begin{defn}[Cancellation operation]\label{cancellation operation}\rm
	For a closed oriented surface $\Sigma$
	and a compact orientable surface $\Sigma_0$ ($\Sigma_0$ may be disconnected),
	let $g: \Sigma_0 \to \Sigma$ be a polymersion.
	Assume that the closed domains $A_1, A_2, \ldots, A_n \subseteq \Sigma_0$ are cancellable.
	The \emph{cancellation of $\{A_1,A_2,\ldots,A_n\}$} (or, \emph{canceling $\{A_1,A_2,\ldots,A_n\}$})
	$(g,\Sigma_0) \leadsto (g_1,\Sigma_1)$
	is the following procedure:
	
	$\bullet$
	Let $\Sigma^{'}_{0}$ be $\overline{\Sigma_0 \setminus (A_1 \cup A_2 \cup \ldots \cup A_n)}$.
	$g_0 = g \mid_{\Sigma^{'}_{0}}$.
	Let $h$ be the equivalence relation such that 
	$x \stackrel{h}{\sim} y$ if
	$x,y \in \partial \Sigma_{0}^{'} \cap g_{0}^{-1}(G), g_0(x) = g_0(y)$.
	Let $\Sigma_{1}$ be the identification space $\Sigma_{0}^{'} / \sim_h$.
	Let $h_*: \Sigma_{0}^{'} \to \Sigma_1$ be the identification map induced by $h$.
	Let $g_1: \Sigma_1 \to \Sigma$ be the map given by following commutative diagram.
	\begin{center}
		$\xymatrix{
			& \Sigma^{'}_0 \ar[d]^{h_*} \ar[r]_{g_0}
			& \Sigma \ar[d]_{id}       \\
			& \Sigma_1 \ar[r]^{g_1}   & \Sigma                }$
	\end{center}
Then $(g,\Sigma_0) \leadsto (g_1,\Sigma_1)$ has defined.
The identification map $h_*$ is called the \emph{cancellation map} of $A_1,A_2,\ldots,A_n$.
\end{defn}

The cancellation $(g,\Sigma_0) \leadsto (g_1,\Sigma_1)$ is depend on the choice of cancellable domains $A_1,A_2,\ldots,A_n$.
In Section \ref{section 4},
we determine the cancellation by a graph $G$ or a pair of graphs $(G,g(G^{'}))$,
and denote it by $(g,\Sigma_0) \stackrel{G}{\leadsto} (g_1,\Sigma_1)$ 
or $(g,\Sigma_0) \stackrel{(G,g(G^{'}))}{\leadsto} (g_1,\Sigma_1)$.

\begin{lm}\label{cancellation-well}
	The identification space $\Sigma_1$ is a compact orientable surface that may be disconnected,
	and the map $g_1: \Sigma_1 \to \Sigma$ is a polymersion.
\end{lm}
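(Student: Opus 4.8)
The plan is to verify the two assertions by a purely local analysis near the cut locus, using the control over $g_0 = g\mid_{\Sigma'_0}$ provided by the graph $G$ associated to $A_1,\ldots,A_n$. First I would record the easy global facts. Since $A_1,\ldots,A_n$ are closed domains in the compact orientable surface $\Sigma_0$, the set $\Sigma'_0 = \overline{\Sigma_0 \setminus (A_1 \cup \cdots \cup A_n)}$ is a compact orientable surface with boundary, and $\Sigma_1 = \Sigma'_0/\!\sim_h$ is compact as a continuous image of a compact space. The boundary $\partial \Sigma'_0$ splits into the arcs inherited from $\partial \Sigma_0$, which $g_0$ sends into $\partial R(g)$, and the \emph{new} arcs created by the cut, which $g_0$ sends into $G$; here the second bullet of Definition \ref{cancellable components} is what forces the $\partial R(g)$-part of each $\partial A_i$ to lie already in $\partial \Sigma_0$, so that $\sim_h$ identifies only points of the new arcs.

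The core of the argument is the local model over $G$. Fix $p$ in the interior of an edge $e$ of $G$ and a small disk $D \subseteq R(g)$ around $p$ meeting $G$ only along $e$ and containing no branch point (permissible by Remark \ref{polymersion remark} and by choosing $G$ to miss the finitely many critical values). Because $e$ lies in the interior of $R(g)$, the preimage $g^{-1}(D)$ is a disjoint union of $N = \#(g^{-1}(p))$ disks, each mapped homeomorphically onto $D$; and since $\stackrel{\circ}{A_i}$ maps homeomorphically onto a component of $R(g)\setminus G$, exactly one sheet per region abutting $e$ is a piece of some $A_i$. Deleting the interiors of these one-per-region sheets leaves precisely two free boundary arcs over $e \cap D$ with identical $g_0$-image, namely the continuations of the two affected sheets to the opposite sides of $e$, and $\sim_h$ reglues them into a single disk mapped homeomorphically onto $D$. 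Thus every point over an interior edge of $G$ acquires a disk neighbourhood in $\Sigma_1$. The analogous bookkeeping at a vertex $v$ of $G$ shows the affected sectors around $v$ are reglued cyclically around a single preimage, giving a local model $z \mapsto z^{k}$; at a leaf of $G$ (on $\partial R(g)$) the second bullet of Definition \ref{cancellable components} guarantees the cut arcs terminate on $\partial \Sigma_0$, so no identification is forced and the local model is a half-disk. Consequently $\Sigma_1$ is a surface whose boundary consists exactly of the unglued arcs lying over $\partial R(g)$.

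It then remains to settle orientability and the polymersion property. I would orient $\Sigma'_0$ by restriction of the orientation of $\Sigma_0$; since $g$, and hence $g_0$, maps interiors to the left of the boundary, the two arcs identified by $\sim_h$ carry opposite induced boundary orientations, so the gluing is orientation-reversing along the boundary and the orientation of $\Sigma'_0$ descends to $\Sigma_1$. Finally, $g_1$ is well defined and continuous by the commutative diagram of Definition \ref{cancellation operation}; at a point coming from $\stackrel{\circ}{\Sigma'_0}$ it agrees with $g$, whose local form is $z \mapsto z^{k}$; at a point over the interior of $G$ the local analysis above supplies the form $z \mapsto z^{k}$ (with $k=1$ along edges); and near $\partial \Sigma_1$, which maps into $\partial R(g) \subseteq g(\partial \Sigma_0)$, the map $g_1$ restricts to the immersion $g\mid_{\partial \Sigma_0}$, furnishing the open set $U$ required by Definition \ref{polymersion}. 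The standing assumptions of Remark \ref{polymersion remark} are inherited by $g_1$, completing the verification.

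The step I expect to demand the most care is the vertex analysis together with the orientation bookkeeping: one must check that the cut-and-reglue operation pairs the new boundary arcs \emph{consistently} around each preimage of a vertex, so that its link in $\Sigma_1$ is a single circle and the induced orientations match, rather than producing a non-manifold identification. This is precisely where the structural hypotheses of Definition \ref{cancellable components}, that $G \cap \partial R(g)$ consists only of leaves and that $\{g(\stackrel{\circ}{A_1}),\ldots,g(\stackrel{\circ}{A_n})\}$ is \emph{exactly} the set of components of $R(g)\setminus G$, enter essentially.
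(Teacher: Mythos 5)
The paper states Lemma \ref{cancellation-well} with no proof at all --- it is immediately followed by the discussion of $G(A_1,\ldots,A_n)$ --- so there is no argument of the author's to compare yours against. Your local-analysis strategy (compactness and orientability globally, then a sheet-by-sheet model over edges, vertices, and leaves of $G$) is the natural way to prove it, and the edge and orientation parts are essentially sound. But two steps as written are genuinely wrong. First, you dispose of branch points ``by choosing $G$ to miss the finitely many critical values.'' You do not get to choose $G$: it is the graph associated to the given cancellable domains, and in the paper's own use of this lemma (Case \ref{case 2} and the induction in Proposition \ref{surjective}) the branch points of $g$ are deliberately placed as vertices of the associated graph via the set $N$. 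So the local model at a critical point lying over $G$ --- where the $2k$ alternating sectors of the $z\mapsto z^{k}$ chart interact with the ``one removed sector per complementary region'' count --- is an unavoidable case, and your proof omits it entirely.

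Second, at an ordinary vertex $v$ your claim that the surviving sectors are ``reglued cyclically around a single preimage, giving a local model $z\mapsto z^{k}$'' is false in general. The edge identifications pair the free boundary rays around $v$, and the surviving fans of sectors close up into one \emph{or several} cycles; each cycle yields a branched disk over a neighbourhood of $v$, but with its own apex, so $v$ may acquire several preimages in $\Sigma_1$. That is still a surface and still a polymersion, so the lemma survives, but the single-preimage property you assert is precisely the content of Definition \ref{regular}: if it held automatically, regularity would be a vacuous condition, whereas the paper treats it as a genuine hypothesis needed for the associated map $T$. (A smaller slip of the same kind: over an edge you claim ``precisely two free boundary arcs,'' but when the two removed half-sheets lie in the same sheet of $g^{-1}(D)$ --- the possibility the paper flags in the sentence right after the lemma --- that sheet disappears entirely and there are zero free arcs; harmless, but your count should allow it.) To repair the proof you should redo the vertex and critical-point analysis as a cycle decomposition of glued sectors rather than assuming a single cycle, and verify that every resulting cone maps to $\Sigma$ by some $z\mapsto z^{m}$ with $m\geqslant 1$.
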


For each $s \in E(G)$,
there are $2$ arcs in $\coprod_{i=1}^{n} \partial A_i$ mapped homeomorphically to $s$ by $g$,
and they may be the same arc in $\Sigma_0$.
$g(\partial(A_1 \cup A_2 \cup \ldots \cup A_n)) \cap G$ is the subgraph of $G$ consisting of 
all $s \in E(G)$ that have $2$ different preimages in $A_1 \cup A_2 \cup \ldots \cup A_n$ (Figure \ref{G(...)}).

\begin{defn}\label{regular}\rm
For a closed oriented surface $\Sigma$
and a compact orientable surface $\Sigma_0$ ($\Sigma_0$ may be disconnected),
let $g: \Sigma_0 \to \Sigma$ be a polymersion.
Assume that the closed domains $A_1, A_2, \ldots, A_n \subseteq \Sigma_0$ are cancellable.

(i)
Let $G(A_1,A_2,\ldots,A_n) = g(\partial(A_1 \cup A_2 \cup \ldots \cup A_n)) \cap G$.

(ii)
Let $h_*: \Sigma^{'}_{0} \to \Sigma_1$ be the cancellation map of $A_1, A_2, \ldots, A_n$ ($\Sigma^{'}_{0} = \overline{\Sigma_0 \setminus (A_1 \cup A_2 \cup \ldots \cup A_n)}$).
The cancellation of $A_1, A_2, \ldots, A_n$ is called \emph{regular}
if $\#(h_*(\partial \Sigma_{0}^{'} \cap g_{*}^{-1}(x))) = 1$,
$\forall x \in V(G(A_1,A_2,\ldots,A_n))$.
\end{defn}

If the cancellation of $A_1, A_2, \ldots, A_n$ is regular,
then $\#(h_*(\partial \Sigma_{0}^{'} \cap g_{*}^{-1}(x))) = 1, \forall x \in G(A_1,A_2,\ldots,A_n)$.
So there is a map $T: G(A_1,A_2,\ldots,A_n) \to \Sigma_1$ such that
$\{T(x)\} = h_*(\partial \Sigma_{0}^{'} \cap g_{*}^{-1}(x))$,
$\forall x \in G(A_1,A_2,\ldots,A_n)$.
Note that the image of $G(A_1,A_2,\ldots,A_n) \cap \partial R(g)$ under $T$ lies in $\partial \Sigma_1$.
Call $T$ the \emph{associated map} of canceling $\{A_1, A_2, \ldots, A_n\}$.

\begin{figure}\label{G(...)}
	\centering 
	\includegraphics[width=0.5\textwidth]{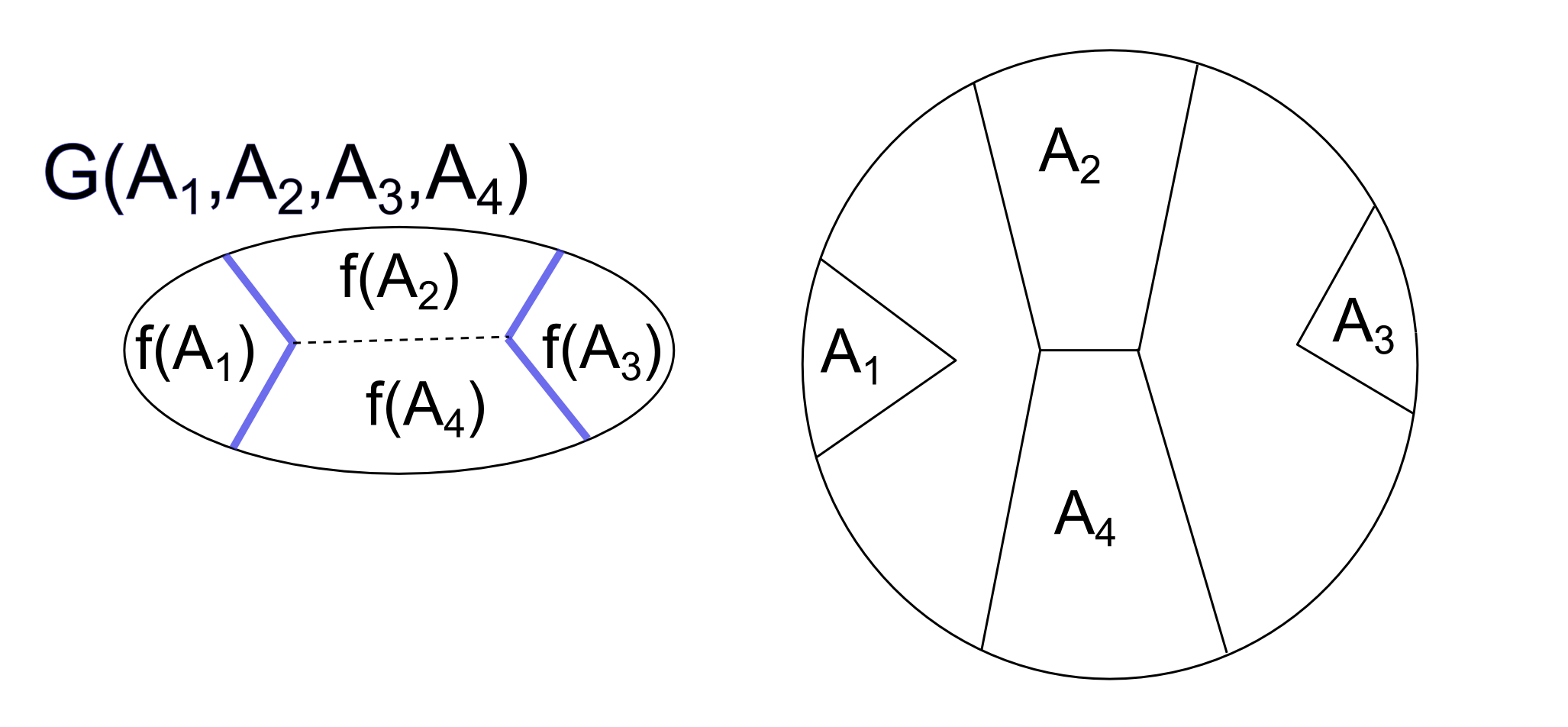}
	\caption{$G(A_1,A_2,A_3,A_4)$.}
\end{figure}
	
	\section{The construction of cancellations}\ \label{section 4}
	
	In last section,
	we define the cancellation operation,
	which depends on the choice of cancellable domains (Definition \ref{cancellable components}).
	Given $g: \Sigma_0 \to \Sigma$ a polymersion,
	this section is mainly concerned with constructing cancellable domains.
	More precisely,
	$g$ yields an embedded graph $G \subseteq R(g)$,
	and $G$ determine the cancellable domains in $\Sigma_0$.
	Then the cancellation operation $(g,\Sigma_0) \stackrel{G}{\leadsto} (g_1,\Sigma_1)$ (in Case \ref{case 1}) or $(g,\Sigma_0) \stackrel{(G,g(G^{'}))}{\leadsto} (g_1,\Sigma_1)$ (in Case \ref{case 2}, where $G^{'} \subseteq  \Sigma_0$ is given in Case \ref{case 2}) is constructed.
	
	Subsection \ref{subsection 4.1} gives the way to yield an embedded graph in a union of closed regions.
	Subsection \ref{subsection 4.2} introduces the \emph{construction triple} (Definition \ref{construction triple}) to construct cancellable domains,
	and Subsection \ref{subsection 4.3} constructs cancellable domains by yielding an embedded graph and choosing a construction triple.
	
\subsection{trivalent graphs}\ \label{subsection 4.1}

We define some embedded graphs generated in a union of closed regions.
Such graphs are used to construct cancellable domains in Subsection \ref{subsection 4.3}.

\begin{defn}\label{case 1 graph}\rm 
	Let $A$ be a union of compact orientable surfaces with nonempty boundaries.
	Let $P \subseteq \partial A$ be a finite set of points 
	(may be $\emptyset$).  
	
	(i) 
	$(A,P)$ is \emph{appropriate} if:
	$A$ has no component $A_0$ 
	such that 
	$A_0$ is a disk 
	and $\#(A_0 \cap P) = 1$. 
	
	(ii)
	Assume $(A,P)$ is appropriate.
	An embedded graph $G \subseteq A$ is an \emph{$(A,P)$-trivalent graph} if:
	Each vertex of $G$ has degree no more than $3$,
	$P = \{v \mid v \in V(G), deg_G(v) = 1\}$,
	and the following holds for each component $A_0$ of $A$:
	
	$\bullet$
	If $A_0$ is not a disk or $\#(A_0 \cap P) \geqslant 2$,
	then
	$\bigcup_{e \in E(G), e \subseteq A_0, e \cap P = \emptyset} e$ is a deformation retract of $A_0$.
	
	$\bullet$
	If $A_0$ is a disk and $A_0 \cap P = \emptyset$,
	then $A_0 \cap G = \emptyset$.
\end{defn}

\begin{figure}\label{case 1 picture}
	\includegraphics[width=0.33\textwidth]{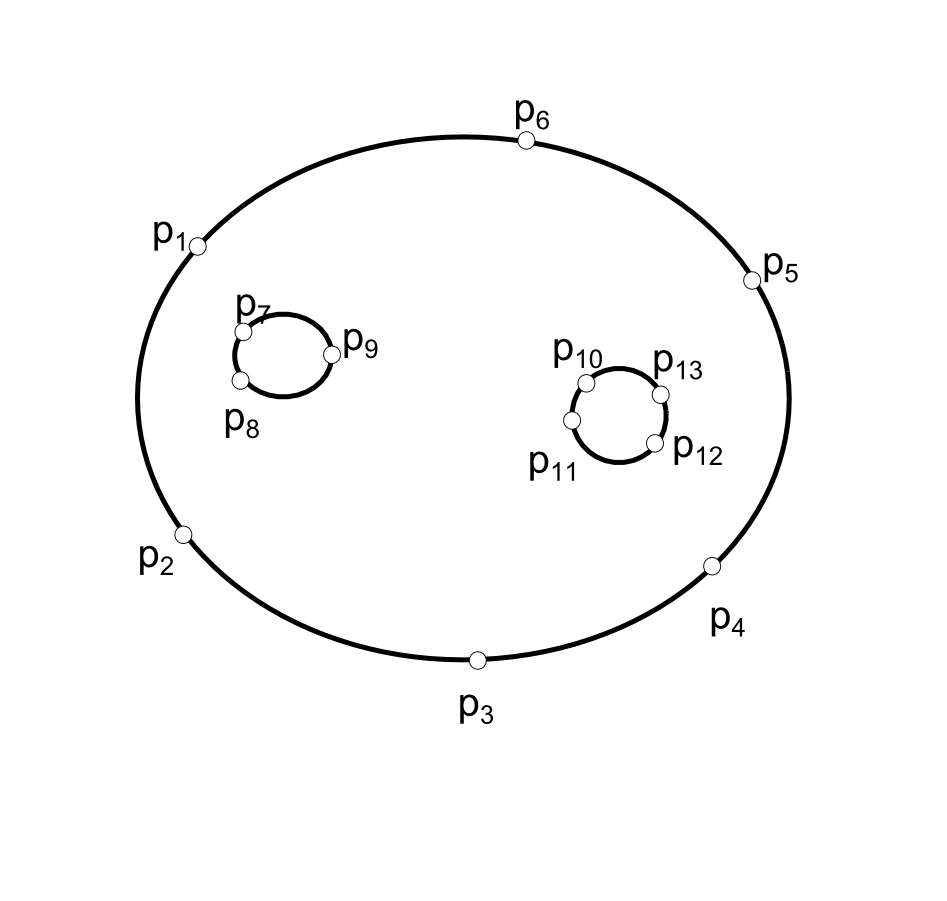}
	\includegraphics[width=0.33\textwidth]{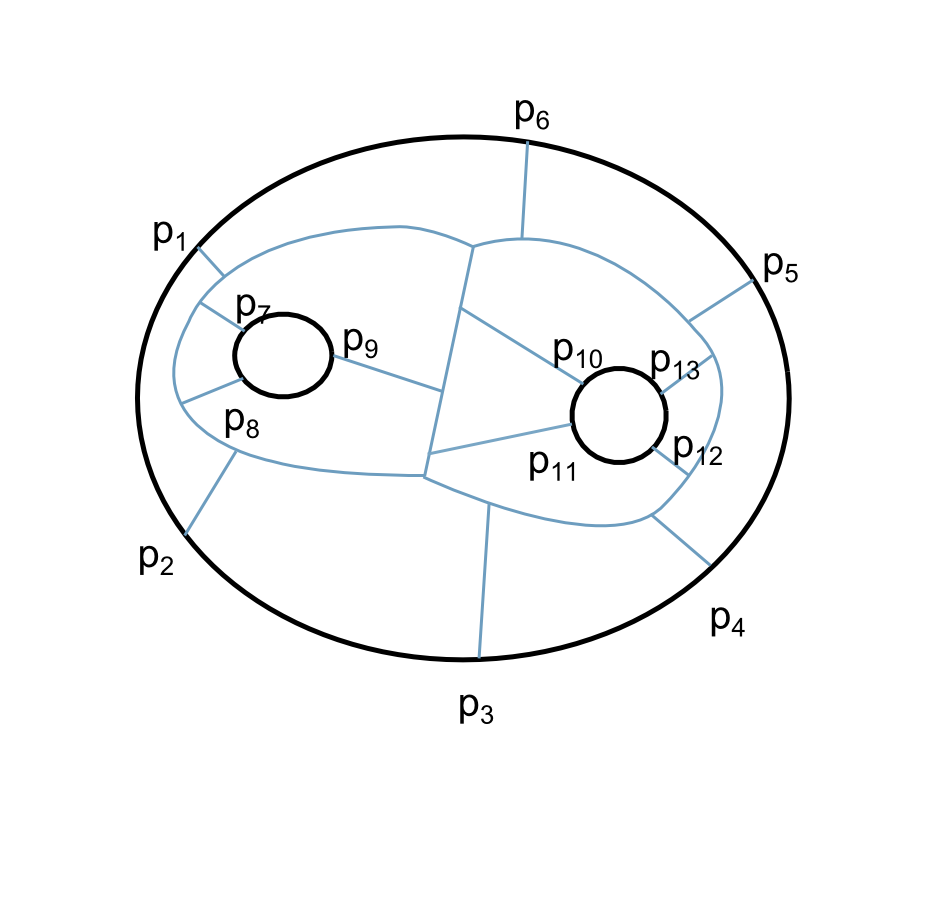}
	\includegraphics[width=0.33\textwidth]{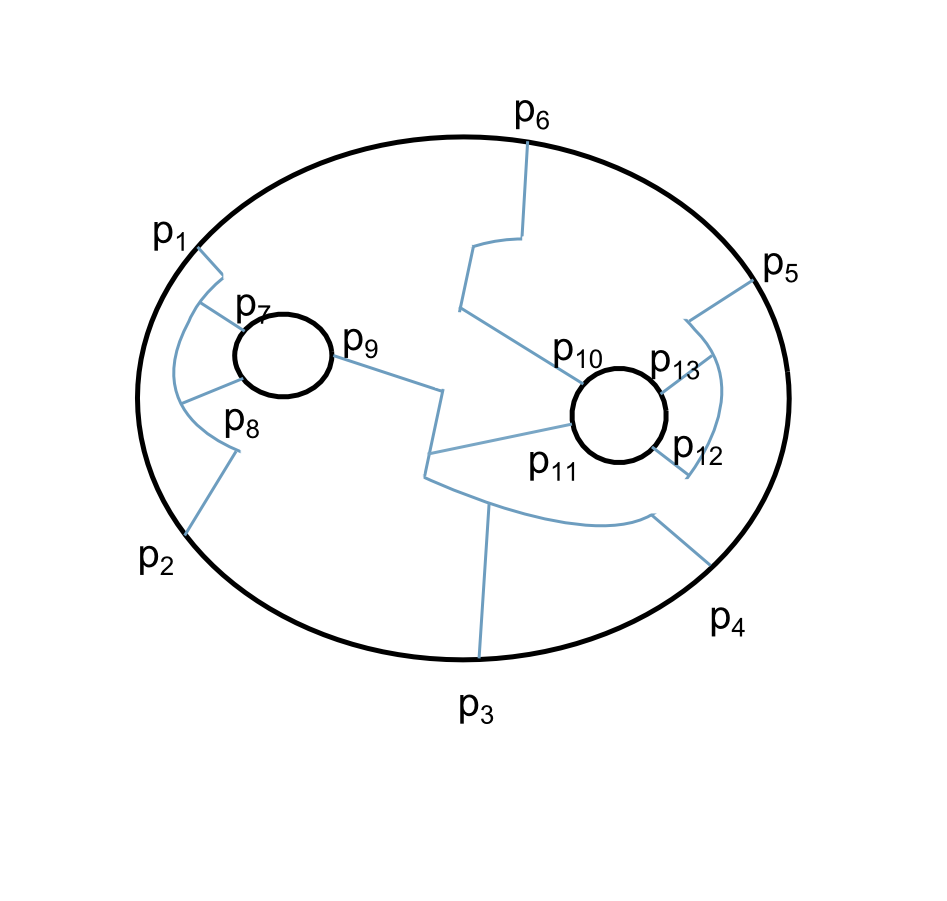}
	\caption{the $(A,P)$-trivalent graph, and a good subgraph of it.}
\end{figure}	

\begin{defn}\label{good subgraph}\rm
	Let $X$ be a graph whose vertices have degree no more than $3$.
	Assume $Y$ is a subgraph of $X$ such that arbitrary vertices of $Y$ have degree no less than $1$.
	$Y$ is called a \emph{good subgraph} of $X$ if
	$\{v \mid v \in V(X), deg_X(v) = 1\} = \{v \mid v \in V(Y), deg_Y(v) = 1\}$.
	
	We denote the set of all good subgraphs of $X$ by $sub(X)$.
\end{defn}

\begin{defn}\label{case 2 graph}\rm
	Let $\Sigma$ be a closed oriented surface,
	$A \subseteq \Sigma$ is a union of closed regions ($A$ may be $\Sigma$).
	Let $G_0 \subseteq A$ be an embedded graph such that
	the vertices have degree no more than $3$,
	and $G_0 \cap \partial A = \{v \mid v \in V(G_0), deg_{G_0}(v) = 1\}$.
	Let $P \subseteq (\partial A \setminus G_0) \cup \{v \mid v \in V(G_0), deg_{G_0}(v) = 3\}$ be a finite set of points.
	
	(i)
	Let $a_1$, $a_2$, \ldots, $a_{m}$
	be the components obtained by
	cutting off $G_0$ from $A$ (which means to delete $G_0$ from $A$ and do a path compactification).
	For each $k \in \{1,2,\ldots,m\}$,
	let $i_k: a_k \to A$ be the continuous map such that 
	the restriction of $i_k$ to $\stackrel{\circ}{a_k}$ is an inclusion,
	and let $P_k = \{x \mid x \in \partial a_k, i_k(x) \in P\}$.
	$(A,G_0,P)$ is called \emph{appropriate} if
	$(a_k,P_k)$ is appropriate,
	$\forall k \in \{1,2,\ldots,m\}$.
	
	(ii)
	Assume $(A,G_0,P)$ is appropriate.
	$G \subseteq A$ is called a \emph{thin $(A,G_0,P)$-trivalent graph} if:
	there exists $G_k \subseteq a_k$ an $(a_k,P_k)$-trivalent graph
	for each $k \in \{1,2,\ldots,m\}$,
	such that $G = \bigcup_{k=1}^{m} i_k(G_k)$.
	
	(iii)
	Let $H$ be a subgraph of $G$.
	We cut off $G_0 \cap G$ from $G$, and obtain $G^{'}$.
	$H$ is a \emph{$G_0$-good subgraph} of $G$ if:
	for each component $L$ of $G^{'}$,
	$H \cap L$ is a good subgraph of $L$.
	We denote by $sub_{G_0}(G)$ the set of $G_0$-good subgraphs of $G$.
\end{defn}

\begin{figure}\label{case 2 picture}
	\includegraphics[width=0.33\textwidth]{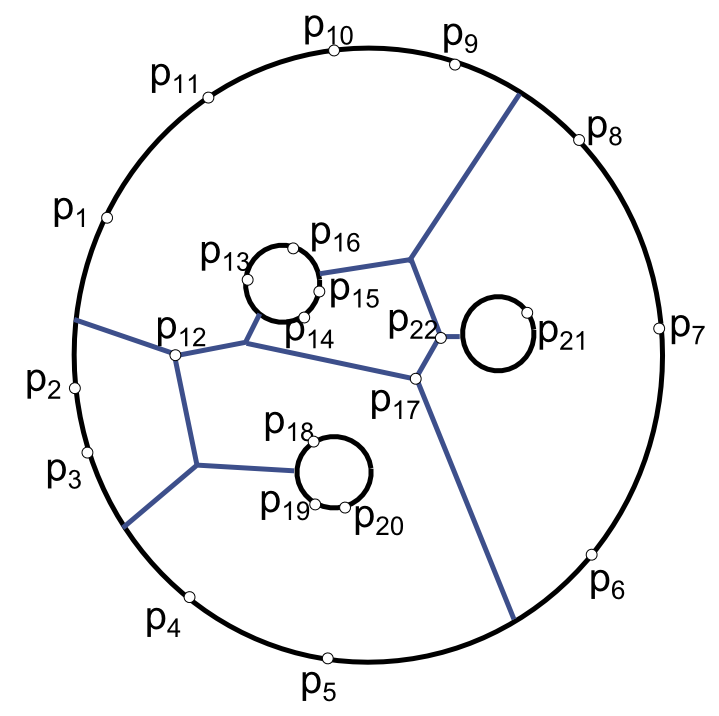}
	\includegraphics[width=0.33\textwidth]{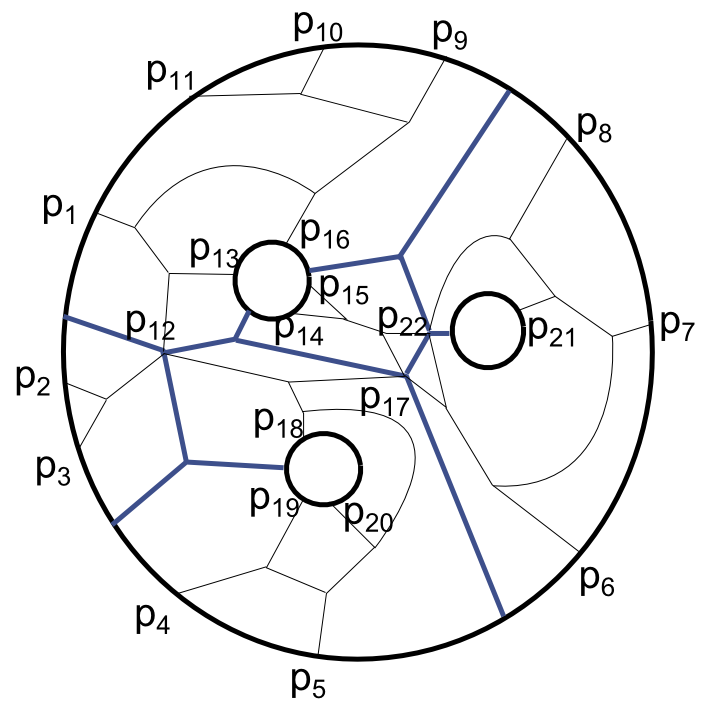}
	\includegraphics[width=0.33\textwidth]{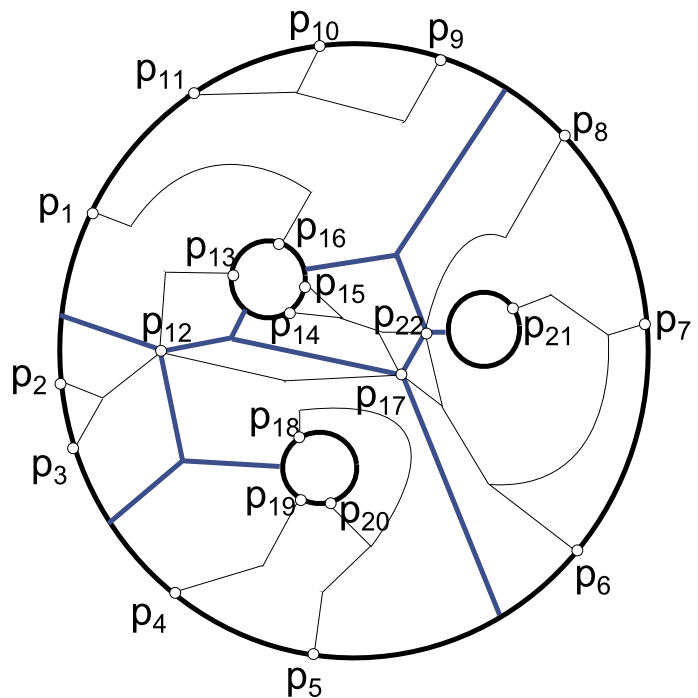}
	\caption{the thin $(A,G_0,P)$-trivalent graph, and a $G_0$-good subgraph of it.}
\end{figure}

For our convenience,
we give the following generalizations:

\begin{remark}\label{remark of emptyset}\rm
	We include the case $G_0 = \emptyset$ if $A \ne \Sigma$.
	In this case,
	$P \subseteq \partial A$,
	$(A,G_0,P)$ is appropriate if and only if $(A,P)$ is appropriate,
	and a thin $(A,G_0,P)$-trivalent graph is a $(A,P)$-trivalent graph.
	Moreover,
	$sub_{G_0}(G) = sub(G)$.
\end{remark}
	
\subsection{Construction triple}\ \label{subsection 4.2}

We consider a triple to construct cancellable domains.
Recall that $R(g) = D_{\max \psi} (g(\partial \Sigma_0),\psi)$ for an arbitrary normal numbering $\psi$ (Definition \ref{R(g)}).

\begin{defn}\rm\label{construction triple}
	For a closed oriented surface $\Sigma$
	and a compact orientable surface $\Sigma_0$ ($\Sigma_0$ may disconnected),
	let $g: \Sigma_0 \to \Sigma$ be a polymersion.
	
	(i)
	The \emph{construction triple} $(\{p_1,p_2,\ldots,p_n\},G,\{\tilde{p}_1,\tilde{p}_2,\ldots,\tilde{p}_n\})$ is given by:
	
	$\bullet$
	$p_1,p_2,\ldots,p_n \in R(g)$ are some points.
	
	$\bullet$
	$G \subseteq R(g)$ is a graph such that $G \cap \partial R(g)$ is the set of leaves in $G$,
	and each component of $R(g) \setminus G$ includes one element of $\{p_1,p_2,\ldots,p_n\}$ exactly.
	
	$\bullet$
	$\tilde{p}_1,\tilde{p}_2,\ldots,\tilde{p}_n \in \Sigma_0$,
	$\tilde{p}_k \in g^{-1}(p_k), \forall k \in \{1,2,\ldots,n\}$.
	
	(ii)
	A construction triple 
	$(\{p_1,p_2,\ldots,p_n\},G,\{\tilde{p}_1,\tilde{p}_2,\ldots,\tilde{p}_n\})$ is \emph{suitable} if there exist 
	closed domains $A_1,A_2,\ldots,A_n \subseteq \Sigma_0$ such that:
	
	$\bullet$
	$\stackrel{\circ}{A_1}, \stackrel{\circ}{A_2}, \ldots, \stackrel{\circ}{A_n}$ are
	homeomorphically embedded into $\Sigma$ by $g$,
	and $\{g(\stackrel{\circ}{A_1}), g(\stackrel{\circ}{A_2}), \ldots, g(\stackrel{\circ}{A_n})\}$ 
	is the set of the components of $R(g) \setminus G$.
	
	$\bullet$
	$\tilde{p}_k \in A_k$,
	$\forall k \in \{1,2,\ldots,n\}$.
	
	$\bullet$
	$A_1,A_2,\ldots,A_n$ are cancellable.
\end{defn}

Moreover,
in the case of (ii),
$A_1,A_2,\ldots,A_n$ are said to be the cancellable domains \emph{given by} $(\{p_1,p_2,\ldots,p_n\},G,\{\tilde{p}_1,\tilde{p}_2,$ $\ldots,\tilde{p}_n\})$.

\subsection{The construction}\ \label{subsection 4.3}

Let $\Sigma_0$ be a compact orientable surface ($\Sigma_0$ may disconnected) and $\Sigma$ a closed oriented surface.
Let $g: \Sigma_0 \to \Sigma$ be a polymersion.

This subsection is to construct cancellable domains in following two cases.

\begin{case}\rm\label{case 1}
	$\partial \Sigma_0 \ne \emptyset$ (then $\partial R(g) \ne \emptyset$),
	and there is no branch point in $R(g)$. 
\end{case}

Let $N$ be the set of nodes of $g(\partial \Sigma_0)$ in $\partial R(g) \subseteq g(\partial \Sigma_0)$.

\begin{lm}\label{case 1 appropriate}
	$(R(g),N)$ is appropriate.
\end{lm}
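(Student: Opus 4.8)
The plan is to argue by contradiction: I will assume $(R(g),N)$ fails to be appropriate, i.e. that some component $A_0$ of $R(g)$ is a disk with $\#(A_0\cap N)=1$, and derive an impossibility by lifting $A_0$ through $g$ and using that $R(g)$ contains no branch point (Case \ref{case 1}). The first ingredient is the local picture of $R(g)=D_{\max\psi}(g(\partial\Sigma_0),\psi)$ at a node. Write $n=\max\psi$. At a transverse double point $p$ the four surrounding region--values are $v,v+1,v+1,v+2$, with equal values on diagonal quadrants and the maximum $v+2$ on the quadrant lying to the left of both strands. If $p\in\partial R(g)=\partial D_n$ then some surrounding value equals $n$, and since $n$ is the maximum this forces $v+2=n$, leaving the other three values $n-1,n-1,n-2<n$. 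Hence at every $p\in N$ the region $R(g)$ occupies exactly the single top quadrant, so $p$ is a convex corner at which $\partial R(g)$ switches from one of the two crossing strands to the other; in particular $N=\emptyset$ unless $n\geq 2$.

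Next I would read off the boundary data of the hypothetical disk $A_0$ with unique node $p$. Its boundary $\partial A_0$ is a Gaussian circle (embedded, Definition \ref{Gaussian circle}) with exactly one corner at $p$ and one edge $e=\partial A_0\setminus\{p\}$. Being a transverse double point, $p$ has exactly two preimages $p_1\neq p_2$ on $\partial\Sigma_0$. A generic point of $e$ is a simple point of $g(\partial\Sigma_0)$, so over $e$ there is a single boundary preimage arc $\gamma\subseteq\partial\Sigma_0$; by the local analysis above the corner at $p$ switches strands, so the two ends of $\overline{\gamma}$ limit to the two \emph{distinct} points $p_1$ and $p_2$.

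Finally I would lift $A_0$. Since $A_0$ is a disk and $R(g)\supseteq A_0$ has no branch point, $g$ restricts to an unbranched covering of degree $n$ over $A_0$; as $A_0$ is simply connected this covering is \emph{trivial}, so $\overline{g^{-1}(\stackrel{\circ}{A_0})}$ is a disjoint union of $n$ disks, each carried homeomorphically onto $A_0$ by $g$. Let $\tilde{A}_{j_0}$ be the sheet on the $A_0$--side of $\gamma$; then $\gamma\subseteq\partial\tilde{A}_{j_0}$, so $p_1,p_2\in\tilde{A}_{j_0}$ are two distinct points with $g(p_1)=g(p_2)=p$, contradicting that $g\mid_{\tilde{A}_{j_0}}$ is a homeomorphism onto $A_0$. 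This contradiction shows no such $A_0$ exists, i.e. $(R(g),N)$ is appropriate.

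The step I expect to be the main obstacle is promoting the unbranched covering over the open disk $\stackrel{\circ}{A_0}$ to a trivial covering over the closed disk $A_0$ together with its corner at $p$: one must supply genuine covering charts at the boundary preimages over $e$ and, more delicately, at the preimages over the corner $p$ --- the two boundary points $p_1,p_2$ and the $n-2$ interior preimages --- using that $g\mid_{\partial\Sigma_0}$ is a normal immersion with $\Sigma_0$ mapped to its left and that there is no branch point in $R(g)$. Once this is secured the contradiction is immediate; equivalently one may phrase it as $g^{-1}(\partial A_0)$ being forced at once to be $n$ disjoint circles and, since $\gamma$ joins the distinct points $p_1,p_2$, to possess a component covering $\partial A_0$ with degree $\geq 2$.
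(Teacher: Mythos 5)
Your proposal is correct and follows essentially the same route as the paper's own proof: assume a disk component $A_0$ of $R(g)$ meeting $N$ in a single node $p$, observe that the single preimage arc of $\partial A_0\setminus\{p\}$ lies in one component of $g^{-1}(A_0)$ and terminates at the two distinct preimages of $p$, and contradict the fact that (absent branch points) this component is mapped homeomorphically onto the disk $A_0$. You merely spell out the quadrant analysis at the node and the triviality of the covering in more detail than the paper does (note only that the covering degree is the number of $g$-preimages, not $\max\psi$ in general, which is immaterial here).
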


\begin{proof}
	If not,
	then there exists a component $D_0 \subseteq R(g)$,
	$D_0$ is a disk and $\#(N \cap D_0) = 1$.
	Assume without loss of generality that $N \cap D_0 = \{p\}$.
	Let $D^{'}_{0}$ be the component of $g^{-1}(D_0)$ such that $(g \mid_{\partial \Sigma_0})^{-1}(\partial D \setminus \{p\}) \subseteq D^{'}_{0}$,
	then $D^{'}_{0}$ includes $2$ different preimages of $p$.
	This contradicts to $D_0$ is a disk ($D^{'}_{0}$ must be mapped homeomorphically to $D_0$ by $g$).
	So $(R(g),N)$ is appropriate.
\end{proof}

\begin{lm}\label{case 1 cancellation}
Assume $p_1,p_2,\ldots,p_n \in \partial R(g) \setminus N$ are $n$ points such that
each component of $\partial R(g) \setminus N$ includes one of them exactly.
$\{\tilde{p}_k\} = g^{-1}(p_k) \cap \partial \Sigma_0, \forall k \in \{1,2,\ldots,n\}$.
Assume $G$ is an arbitrary $(R(g),N)$-trivalent graph.
Then the construction triple $(\{p_1,p_2,\ldots,p_n\},G,\{\tilde{p}_1,\tilde{p}_2,\ldots,\tilde{p}_n\})$
is suitable,
and the cancellable domains given by it are independent of the choice of $p_1,p_2,\ldots,p_n,\tilde{p}_1,\tilde{p}_2,\ldots,\tilde{p}_n$.
\end{lm}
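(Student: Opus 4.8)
The plan is to produce, for each component of $R(g) \setminus G$, an explicit lift through $g$ starting at the corresponding $\tilde{p}_k$, take these lifts as the candidate cancellable domains $A_1, \dots, A_n$, and then verify the three bullets of Definition \ref{construction triple}(ii). The whole argument rests on the Case \ref{case 1} hypothesis that there is no branch point in $R(g)$: hence $g$ restricted to $g^{-1}(R(g))$ has no critical point and is a local homeomorphism, and together with the compactness of $\Sigma_0$ and the boundary-collar immersion condition of Definition \ref{polymersion} it admits unique path lifting over $R(g)$. In particular, a lifted path that starts on $\partial \Sigma_0$ over a point of $\partial R(g)$ stays on $\partial \Sigma_0$, since $g$ is an immersion near $\partial \Sigma_0$ and $R(g)$ lies to the left of each boundary segment.

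First I would record the topological shape of the pieces. Writing $B_k$ for the component of $R(g) \setminus G$ that contains $p_k$, I claim each $\overline{B_k}$ is a closed disk whose boundary circle alternates between arcs of $G$ and arcs of $\partial R(g)$. This is what the $(R(g),N)$-trivalent condition of Definition \ref{case 1 graph} buys: on every component $A_0$ of $R(g)$ the interior part $\bigcup_{e \subseteq A_0,\, e \cap N = \emptyset} e$ of $G$ is a deformation retract (a spine), so cutting along it opens $A_0$ into collar pieces, and the legs of $G$ running out to the nodes $N$ subdivide these collars into simply connected cells; the ``appropriate'' hypothesis, established in Lemma \ref{case 1 appropriate}, rules out the degenerate monogon. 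The isolated boundary circles of $R(g)$ carrying no node are treated separately, the adjacent cell lifting as a whole because $g$ is already a homeomorphism onto its image there. I would make this precise with a cut-surface and Euler-characteristic count, matching the number of cells to the number of components of $\partial R(g) \setminus N$; this also shows $n$ is forced and that each $B_k$ carries exactly one arc of $\partial R(g) \setminus N$.

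Next I would construct $A_k$. Since $\overline{B_k}$ is a disk, I model it by an abstract closed disk and lift its characteristic map through $g$ by unique path lifting, starting from $\tilde{p}_k$ over $p_k$; this produces a continuous $\phi_k : \overline{B_k} \to \Sigma_0$ with $g \circ \phi_k = \mathrm{id}$ and $\phi_k(p_k) = \tilde{p}_k$, and I set $A_k = \phi_k(\overline{B_k})$. Because $g$ is a local homeomorphism over $R(g)$ and $B_k$ is simply connected, $\phi_k$ is injective on the open disk, so $\stackrel{\circ}{A_k}$ is homeomorphically embedded onto $B_k$; the no-branch-point hypothesis together with the assumption that distinct critical points have distinct images (Remark \ref{polymersion remark}) prevents the closed lift from developing a self-contact, so $\phi_k$ is an embedding and $A_k$ is a closed domain with $\tilde{p}_k \in A_k$. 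The second bullet of Definition \ref{cancellable components} follows because the arcs of $\partial \overline{B_k}$ lying in $\partial R(g)$ lift under $\phi_k$ into $\partial \Sigma_0$: their lift starts on $\partial \Sigma_0$ at $\tilde{p}_k$ and stays there by the collar immersion property. Taking $G$ itself as the associated graph, whose leaves meet $\partial R(g)$ exactly in $N$, verifies the first bullet, so $A_1, \dots, A_n$ are cancellable and the triple is suitable.

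Finally, independence from the choices is a uniqueness statement about lifts. For fixed $G$ the cells $B_k$ are fixed, and each $B_k$ meets $\partial R(g) \setminus N$ in a single arc whose unique preimage in $\partial \Sigma_0$ is forced, this being the singleton $\{\tilde{p}_k\} = g^{-1}(p_k) \cap \partial \Sigma_0$; since lifting over the simply connected $B_k$ is determined by the image of one boundary point, any other admissible $p_k, \tilde{p}_k$ in the same cell yields the very same $\phi_k$ and hence the same $A_k$. I expect the main obstacle to be the closed-lift step: proving that $\phi_k$ does not merely immerse but embeds $\overline{B_k}$ — that the identifications along $G$ and $\partial R(g)$ occurring in $\Sigma_0$ do not fold the disk onto itself — which is precisely where the absence of branch points in $R(g)$ and the distinctness of critical values must be used carefully, and where the exceptional node-free boundary circles require a separate check.
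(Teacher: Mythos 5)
Your overall strategy --- anchor a lift of each cell $B_k$ at the boundary point $\tilde p_k$ and use the absence of branch points in $R(g)$ to propagate it --- is the same as the paper's, which takes $\tilde B_k$ to be the component of $g^{-1}(B_k)$ containing $\tilde p_k$ and shows it maps homeomorphically onto $B_k$. However, your structural claim that every $\overline{B_k}$ is a closed disk is false, and the cut-surface/Euler-characteristic argument you propose to establish it would fail. If a component $A_0$ of $R(g)$ is not a disk and one of its boundary circles $l$ carries no point of $N$, then the cell of $A_0 \setminus G$ adjacent to $l$ is a half-open annulus between $l$ and the spine of $G$: it deformation retracts onto the circle $l$, so it is neither simply connected nor has disk closure, and unique path lifting over a simply connected base does not apply to it. Your ``separate treatment'' covers only disk components with $G \cap A_0 = \emptyset$; for the annular cell, the assertion that ``$g$ is already a homeomorphism onto its image there'' is exactly what must be proved, not a given. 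The paper's proof sidesteps this by using only that $B_k$ deformation retracts onto the component $l_k$ of $\partial R(g) \setminus N$ contained in it, together with the fact that $\tilde l_k = g^{-1}(l_k) \cap \partial \Sigma_0$ is mapped homeomorphically onto $l_k$ (true whether $l_k$ is an arc or a node-free circle, because $g \mid_{\partial \Sigma_0}$ is a normal immersion); since $g$ restricted to $g^{-1}(B_k)$ is a proper local homeomorphism, the connected covering $\tilde B_k \to B_k$ has degree one because its restriction over the homotopy-equivalent subspace $l_k$ does. You should replace the disk claim by this retraction argument; the rest of your verification of the bullets of Definition \ref{construction triple}(ii) and of the independence statement then goes through.

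A second, smaller point: you single out as ``the main obstacle'' the proof that the closed lift $\phi_k$ embeds all of $\overline{B_k}$. This is not required and may in fact fail: Definition \ref{cancellable components} only asks that the open interiors $\stackrel{\circ}{A_k}$ be homeomorphically embedded, and the paper explicitly allows the two arcs of $\coprod_{i} \partial A_i$ lying over an edge of $G$ to coincide in $\Sigma_0$. So no closed-embedding statement needs to be established, and worrying about boundary self-contact of the lifted cell is effort spent on a claim that is both unnecessary and, in general, untrue.
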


\begin{proof}
	Fix $k \in \{1,2,\ldots,n\}$.
	Let $B_k$ be the component of $R(g) \setminus G$ containing $p_k$.
	Let $\tilde{B}_k$ be the component of $g^{-1}(B_k)$ containing $\tilde{p}_k$.
	
	First,
	we prove that $\tilde{B}_k$ is mapped homeomorphically to $B_k$ by $g$.
	Assume $R_0$ is the component of $R(g)$ containing $B_k$.
	If $R_0$ is a disk and $N \cap R_0 = \emptyset$,
	then $B_k = R_0$,
	and each component of $g^{-1}(B_k)$ is mapped homeomorphically to $B_k$ by $g$.
	If $R_0$ is not a disk or $N \cap R_0 \ne \emptyset$,
	assume $l_k$ is the component of $\partial R(g) \setminus N$ such that $l_k \subseteq B_k$,
	then $B_k \backsimeq l_k$.
	Let $\tilde{l}_k = g^{-1}(l_k) \cap \partial \Sigma_0$,
	then $\tilde{l}_k \subseteq \tilde{B}_k$, 
	and $\tilde{l}_k$ is mapped homeomorphically to $l_k$ by $g$.
	Hence $\tilde{B}_k$ is mapped homeomorphically to $B_k$ by $g$.
	
	Easily,
	$\tilde{B}_1,\ldots,\tilde{B}_n$ are cancellable,
	and they are independent of the choice of $p_1,p_2,\ldots,p_n,\tilde{p}_1,\tilde{p}_2,$ $\ldots,\tilde{p}_n$.
\end{proof}

With above conditions,
the cancellable domains given by  $(\{p_1,p_2,\ldots,p_n\},G,\{\tilde{p}_1,\tilde{p}_2,\ldots,\tilde{p}_n\})$
(which are determined by $G$)
are called the \emph{$G$-cancellable domains}.
Hence $G$ defines a cancellation $(g,\Sigma_0) \stackrel{G}{\leadsto} (g_1,\Sigma_1)$
(the cancellation is determined uniquely by $G$).

\begin{lm}\label{case 1 subgraph}
	Assume $A_1,A_2,\ldots,A_n$ are the $G$-cancellable domains, then $G(A_1,A_2,\ldots,A_n) \in sub(G)$ (where $G(A_1,A_2,\ldots,A_n)$ is defined in Definition \ref{regular} (i)).
\end{lm}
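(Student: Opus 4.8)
The goal is to show $G(A_1,\ldots,A_n)$ is a good subgraph of $G$ in the sense of Definition~\ref{good subgraph}: it is a subgraph whose vertices all have degree at least $1$, and whose degree-$1$ vertices are exactly the degree-$1$ vertices (leaves) of $G$.

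\bigskip

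\noindent\textbf{Proof proposal.}
Recall from Definition~\ref{regular}~(i) that $G(A_1,\ldots,A_n) = g(\partial(A_1 \cup \cdots \cup A_n)) \cap G$, and from the discussion following Lemma~\ref{cancellation-well} that this is the subgraph of $G$ consisting of those edges $s \in E(G)$ having two distinct preimages among the $A_i$ (equivalently, the edges appearing on the boundary of the cancelled region ``from both sides''). The plan is to verify the two defining conditions of a good subgraph directly, using the homeomorphic-embedding property of the $G$-cancellable domains established in Lemma~\ref{case 1 cancellation}.

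First I would identify the leaves of $G$ with the set $N$ (the nodes of $g(\partial \Sigma_0)$ lying in $\partial R(g)$): by the definition of an $(R(g),N)$-trivalent graph (Definition~\ref{case 1 graph}~(ii)), $N = \{v \in V(G) : \deg_G(v) = 1\}$. The key geometric observation is that each $p \in N$ is a node of $g(\partial \Sigma_0)$, hence has exactly two local arcs of $g(\partial \Sigma_0)$ through it; the two edges of $G$ incident in a neighborhood of a leaf run along $g(\partial R(g))$, and because $p$ is a genuine node it is traversed by two distinct boundary arcs, so the edge of $G$ ending at $p$ lies in $g(\partial(A_1\cup\cdots\cup A_n))$ from both sides and therefore survives into $G(A_1,\ldots,A_n)$ as a leaf. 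Conversely, an edge of $G$ meeting $\partial R(g)$ only at a point of $\partial R(g)\setminus N$ would not produce a degree-$1$ vertex of $G$ at all, so no spurious leaves arise. This shows the degree-$1$ vertices of $G(A_1,\ldots,A_n)$ coincide with $N = \{v \in V(G): \deg_G(v)=1\}$.

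Next I would check that $G(A_1,\ldots,A_n)$ is a genuine subgraph with no isolated vertices, i.e.\ every vertex has degree $\geq 1$. A vertex $v$ of $G(A_1,\ldots,A_n)$ arises as an endpoint of some surviving edge $s$, so $\deg(v) \geq 1$ automatically; the content is that $G(A_1,\ldots,A_n)$ is closed as a subgraph, which follows because the incidence relations among edges are inherited from $G$ and $g$ is injective on each $\stackrel{\circ}{A_i}$. Here I would use the appropriateness of $(R(g),N)$ (Lemma~\ref{case 1 appropriate}): appropriateness rules out the degenerate disk-component-with-a-single-node configuration, which is precisely the case that could create an edge with only one preimage meeting a node-leaf, and hence an illegal degree-$0$ or dangling vertex.

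\bigskip

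\noindent The step I expect to be the main obstacle is the \emph{converse} direction of the leaf identification: showing that no edge $s \in E(G)$ incident to an interior (degree-$2$ or degree-$3$) vertex of $G$ can fail to appear in $G(A_1,\ldots,A_n)$ in a way that drops its endpoint's degree to $1$. This requires ruling out, at each interior vertex of $G$, the possibility that some incident edges have a single preimage while others have two; I would argue this by examining, around each vertex $v$, the cyclic arrangement of the regions $g(\stackrel{\circ}{A_i})$ meeting $v$ and observing that an edge has two preimages exactly when the two $A_i$'s on its two sides are distinct domains, and then checking—using that the $\stackrel{\circ}{A_i}$ embed homeomorphically and the trivalence bound $\deg_G(v)\leq 3$—that the resulting incidence pattern never isolates a vertex or lowers an interior vertex to degree $1$ unless $v \in N$.
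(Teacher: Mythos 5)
The paper states Lemma \ref{case 1 subgraph} with no proof at all, so there is nothing to compare your argument against; I am judging it on its own terms. Your treatment of the leaves is correct and is the easy half: a node $v \in N$ has exactly two preimages on $\partial\Sigma_0$, one on each local branch of $g(\partial\Sigma_0)$, and the second bullet of Definition \ref{cancellable components} forces the corner preimages of the two sectors of the quadrant at $v$ to lie at these two distinct boundary points, so the unique edge of $G$ at $v$ has two distinct preimage arcs and survives into $G(A_1,\ldots,A_n)$. The genuine gap is in the step you yourself flag as the main obstacle. The criterion you propose there --- that an edge has two preimages exactly when the domains on its two sides are distinct --- is false in both directions. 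Two distinct domains $A_i \ne A_j$ can be adjacent in $\Sigma_0$ along a single arc mapping onto the edge, in which case the edge has only one preimage arc and is deleted; conversely, when the same component of $R(g)\setminus G$ lies on both sides of an edge, the single domain above it can approach the edge through two distinct boundary arcs, and the edge then survives. Survival depends on how the closures $A_i$ sit in $\Sigma_0$, not on which domains flank the edge in $R(g)$.

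The correct replacement --- and the point where the Case \ref{case 1} hypothesis that $R(g)$ contains no branch point must enter (you never invoke it) --- is a local analysis at each preimage $w$ of an interior vertex $v$ of $G$. Since every such $w$ is a regular point, $g$ is a local homeomorphism at $w$, so the cyclic pattern of sector- and edge-germs at $w$ is an exact copy of the one at $v$. Each of the $\deg_G(v) \leqslant 3$ local sectors at $v$ has a unique distinguished preimage (via the homeomorphism from the relevant cancellable domain), and these distribute among the finitely many preimages of $v$. An edge whose two incident distinguished sectors sit at the same preimage point has its two preimage arcs agreeing near that point, hence along the whole edge (the agreement locus is open and closed in the edge, again because there are no branch points); an edge whose two incident distinguished sectors sit at different preimage points survives. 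Running over the possible partitions of $2$ or $3$ sectors by preimage point, the number of surviving edges at an interior vertex is $0$, $2$ or $3$, never $1$; and when it is $0$ every preimage of $v$ in $A_1 \cup \cdots \cup A_n$ is an interior point, so $v \notin g(\partial(A_1 \cup \cdots \cup A_n))$ and no isolated vertex arises. Your appeal to appropriateness of $(R(g),N)$ plays no role in either of these points; it is needed for $G$ and the cancellable domains to exist, not for this lemma.
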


Next,
we consider the other case.

\begin{case}\rm\label{case 2}
	In this case, 
	$\partial \Sigma_0$ may be $\emptyset$ ($R(g) = \Sigma$ if $\partial \Sigma_0 = \emptyset$).
	There is an embedded graph $G^{'} \subseteq \Sigma_0$ such that (i) $\sim$ (v) hold:
	
	(i)
	The vertices of $G^{'}$ have degree no more than $3$.
	
	(ii)
	$G^{'}$ is homeomorphically embedded into $R(g)$ by $g$.
	
	(iii)
	The leaves of $G^{'}$ are in $\partial \Sigma_0$.
	
	(iv)
	Each critical point in $\Sigma_0$ mapped into $R(g)$ is a vertex of $G^{'}$ with degree $3$.
	 
	(v)
	For each branch point $y \in R(g)$,
	assume $a,b,c$ are the $3$ edges of $g(G^{'})$ at $y$ clockwise.
	Let $x \in \Sigma_0$ such that $\{x\} = g^{-1}(y) \cap G^{'}$ (then $x$ is a critical point)
	and $a^{'}, b^{'}, c^{'}$ the $3$ edges of $G^{'}$ at $x$ mapped to $a,b,c$.
	Then they are in the order $a^{'}, c^{'}, b^{'}$ clockwise around $x$.
\end{case}

\begin{figure}\label{case 2-picture}
	\centering
	\includegraphics[width=0.6\textwidth]{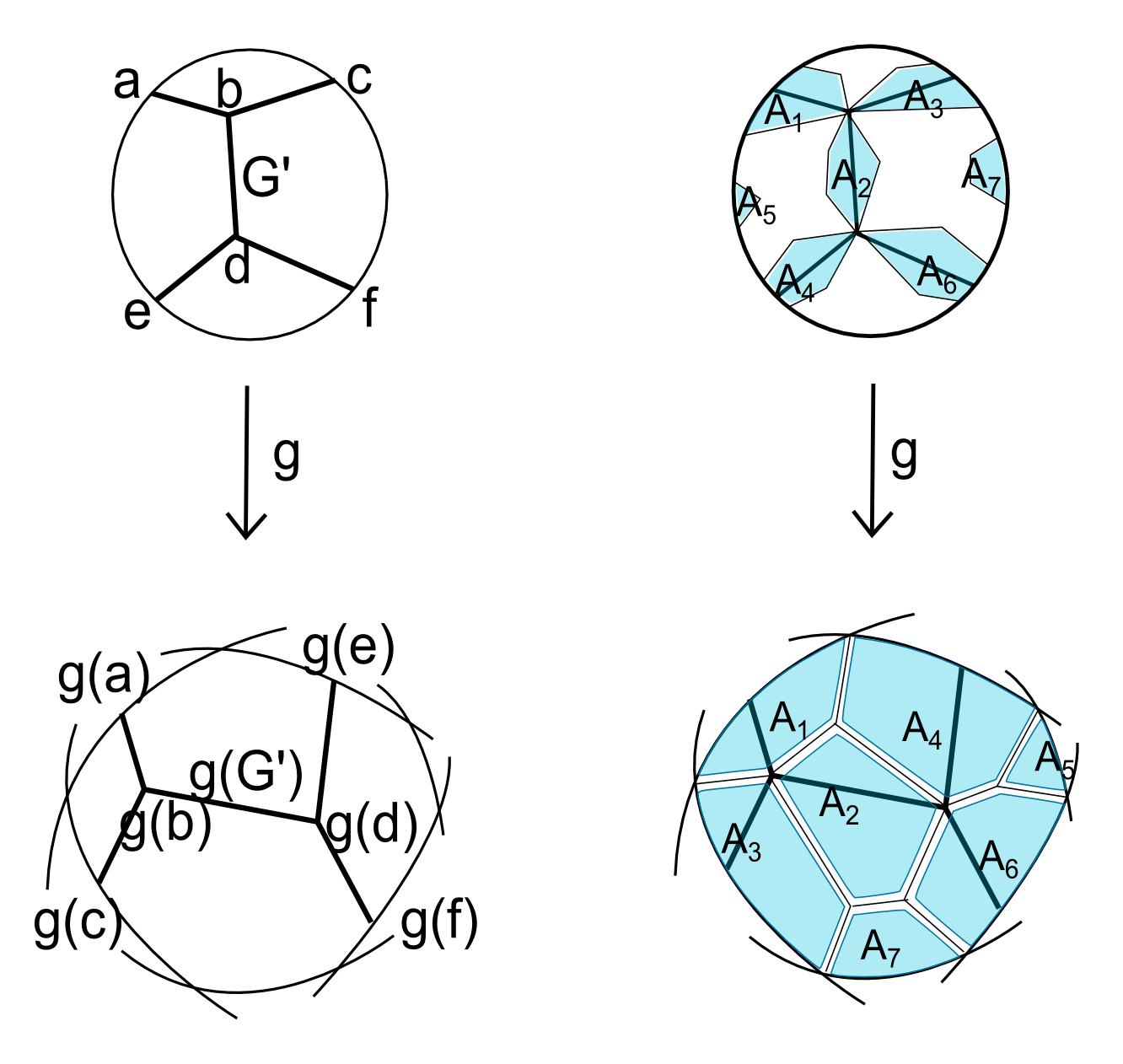}
	\caption{the cancellable domains in case $2$ (where $b$, $d$ are critical points).}
\end{figure}

Let $N$ be the union of the branch points in $D$ and the nodes of $g(\partial \Sigma_0)$ in $\partial D_g \subseteq g(\Sigma_0)$.

\begin{lm}\label{case 2 appropriate}
	$(R(g),g(G^{'}),N)$ is appropriate.
\end{lm}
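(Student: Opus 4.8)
The plan is to mirror the strategy of Lemma \ref{case 1 appropriate}, but carried out component-by-component over the decomposition of $R(g)$ induced by $g(G^{'})$. Recall that appropriateness of the triple $(R(g),g(G^{'}),N)$ means (Definition \ref{case 2 graph} (i)): after cutting off $g(G^{'})$ from $R(g)$ to obtain components $a_1,\ldots,a_m$ with induced point-sets $P_k = \{x \in \partial a_k \mid i_k(x) \in N\}$, each pair $(a_k,P_k)$ is appropriate in the sense of Definition \ref{case 1 graph} (i), i.e.\ no $a_k$ is a disk meeting $P_k$ in exactly one point. So I would fix an arbitrary component $a_k$ and argue by contradiction, assuming $a_k$ is a disk with $\#(a_k \cap P_k) = 1$, say $a_k \cap P_k = \{p\}$.

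The key step is to track what the single marked point $p$ can be and to lift $a_k$ through $g$. By construction $N$ is the union of branch points in $R(g)$ and nodes of $g(\partial\Sigma_0)$ lying in $\partial R(g)$, so $p$ is either a branch point or a boundary node. First I would handle the boundary-node case exactly as in Lemma \ref{case 1 appropriate}: the disk $a_k$ is a component cut from $R(g)$, its boundary consists of arcs of $g(G^{'})$ together with arcs of $\partial R(g)$, and I lift the component of $g^{-1}(a_k)$ carrying the boundary preimage $(g\mid_{\partial\Sigma_0})^{-1}(\partial a_k \setminus \{p\})$. Since $g\mid_U$ is an immersion near $\partial\Sigma_0$ and there are no branch points on $\partial R(g)$ (Remark \ref{polymersion remark}), this lift contains two distinct preimages of the node $p$, contradicting that the lift maps homeomorphically onto the disk $a_k$.

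The genuinely new case — and the main obstacle — is when $p$ is an interior branch point of $R(g)$. Here I must use hypotheses (iv) and (v) of Case \ref{case 2}. By (iv) the unique critical point $x$ with $\{x\} = g^{-1}(p)\cap G^{'}$ is a trivalent vertex of $G^{'}$, and by (v) the cyclic order of the three local sheets at $x$ is $a',c',b'$ against the clockwise order $a,b,c$ of their images at $p$. The point is that a branch point sits in the closure of \emph{several} sheets of $R(g)\setminus g(G^{'})$ simultaneously, so $p$ cannot be a valence-one marked point of a disk component: lifting the would-be disk $a_k$ near $p$ along the sheet structure dictated by (v) forces $a_k$ to wrap around $p$ through more than one local edge of $g(G^{'})$, so either $\#(a_k\cap P_k)\geqslant 2$ or $a_k$ fails to be a disk, contradicting our assumption. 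Concretely, I would analyze the local picture at the index-one branch point: the three edges $a,b,c$ cut a neighborhood of $p$ into three sectors, and the cyclic-order condition (v) guarantees that when these sectors are reassembled into components of $R(g)\setminus g(G^{'})$, no single disk component can meet $p$ only once. I expect the branch-point local analysis to require the most care, since it is precisely where Case \ref{case 2} departs from Case \ref{case 1}; the boundary-node case is then a verbatim repetition of Lemma \ref{case 1 appropriate}. Assembling the two cases, every $(a_k,P_k)$ is appropriate, hence $(R(g),g(G^{'}),N)$ is appropriate.
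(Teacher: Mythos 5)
Your proposal is correct and follows the paper's intended route: the paper's own proof of Lemma \ref{case 2 appropriate} consists solely of the pointer ``See the proof of Lemma \ref{case 1 appropriate}'', i.e.\ exactly the component-by-component contradiction argument you carry out over the pieces obtained by cutting off $g(G^{'})$. In fact you supply strictly more than the paper does, since your separate analysis of the branch-point case via conditions (iv) and (v) of Case \ref{case 2} fills in precisely the step that the paper's one-line cross-reference leaves implicit.
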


\begin{proof}
	See the proof of Lemma \ref{case 1 appropriate}.
\end{proof}

\begin{lm}\label{case 2 cancellation}
	Assume $p_1,p_2,\ldots,p_n \in (g(G^{'}) \cup \partial R(g)) \setminus N$ are $n$ points such that
	each component of $(g(G^{'}) \cup \partial R(g)) \setminus N$ includes one of them exactly.
	$\{\tilde{p}_k\} = g^{-1}(p_k) \cap (\partial \Sigma_0 \cup G^{'}), \forall k \in \{1,2,\ldots,n\}$.
	Assume $G$ is an arbitrary $(R(g),g(G^{'}),N)$-thin trivalent graph.
	Then the construction triple $(\{p_1,p_2,\ldots,p_n\},G,$ $\{\tilde{p}_1,\tilde{p}_2,\ldots,\tilde{p}_n\})$
	is suitable,
	and the cancellable domains given by it are
	independent of the choice of $p_1,p_2,\ldots,p_n,\tilde{p}_1,\tilde{p}_2,\ldots,\tilde{p}_n$.
\end{lm}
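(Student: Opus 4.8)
The plan is to mirror the proof of Lemma~\ref{case 1 cancellation}, adapting each step to account for the presence of the graph $G_0 = g(G')$ and the branch points it carries. Fix $k \in \{1,2,\ldots,n\}$, let $B_k$ be the component of $R(g) \setminus G$ containing $p_k$, and let $\tilde{B}_k$ be the component of $g^{-1}(B_k)$ containing $\tilde{p}_k$. The construction triple will be shown suitable by verifying the three bulleted conditions of Definition~\ref{construction triple}(ii) for the domains $\tilde{B}_1,\ldots,\tilde{B}_n$: that each $\stackrel{\circ}{\tilde{B}_k}$ is homeomorphically embedded into $\Sigma$ by $g$, that $\tilde{p}_k \in \tilde{B}_k$ (immediate by construction), and that $\tilde{B}_1,\ldots,\tilde{B}_n$ are cancellable in the sense of Definition~\ref{cancellable components}.

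First I would establish the key homeomorphism claim: $g \mid_{\tilde{B}_k}$ maps $\tilde{B}_k$ homeomorphically onto $B_k$. Here the argument of Lemma~\ref{case 1 cancellation} must be refined because, unlike Case~\ref{case 1}, the region $R(g)$ now contains branch points lying on $g(G')$, and the ``spine'' controlling each $B_k$ can be a piece of $g(G')$ rather than only a boundary arc. The way to proceed is to identify, for each $B_k$, a one-dimensional subset $l_k \subseteq (g(G') \cup \partial R(g)) \setminus N$ onto which $B_k$ deformation retracts (as guaranteed by the thin $(R(g),g(G'),N)$-trivalent structure via the retraction property of $(a_j,P_j)$-trivalent graphs in Definition~\ref{case 1 graph}(ii)), take its canonical lift $\tilde{l}_k = g^{-1}(l_k) \cap (\partial \Sigma_0 \cup G')$, and check that $\tilde{l}_k \subseteq \tilde{B}_k$ is carried homeomorphically onto $l_k$ by $g$. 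That $\tilde{l}_k$ is a single lift of $l_k$ is exactly where hypotheses (ii) and (iv) of Case~\ref{case 2} enter, since $G'$ is homeomorphically embedded into $R(g)$ by $g$ and the critical points of $g$ sitting over branch points are precisely the degree-$3$ vertices of $G'$. Because $\stackrel{\circ}{B_k}$ contains no branch point (the branch points lie in $N \subseteq g(G')$, hence on the boundary spine and not in the open cell), $g$ is a local homeomorphism over $\stackrel{\circ}{B_k}$, and combining this with the homeomorphic lift of the retract upgrades the local homeomorphism to a global one on $\tilde{B}_k$.

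The main obstacle I expect is the local model around a branch point $y \in g(G')$: there $g$ is locally $z \mapsto z^2$, so a small disk about $y$ has two sheets meeting, and I must confirm that the three sectors of $R(g)\setminus g(G')$ surrounding $y$ are glued to the sectors around the critical point $x = g^{-1}(y)\cap G'$ in a way that keeps each $\tilde{B}_k$ a single cell rather than merging two cells across the branch. This is precisely the cyclic-order condition Case~\ref{case 2}(v) is designed to enforce: the edges $a',c',b'$ around $x$ map to $a,c,b$ (with the transposition of $b$ and $c$) around $y$, so the three germs of components of $R(g)\setminus g(G')$ at $y$ lift to three distinct germs of components of $\Sigma_0 \setminus G'$ at $x$, and no two $\tilde{B}_k$ are forced to coincide locally. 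I would treat this by a careful local picture at each branch point, verifying that the lifted sectors are in bijection with the downstairs sectors under $g$; once this is checked the homeomorphism $g\mid_{\tilde{B}_k}$ follows by a standard covering-space / monodromy argument on $B_k \setminus (\text{finite set})$ together with the retraction.

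Finally I would verify cancellability and independence. Cancellability requires the associated graph condition and the boundary condition of Definition~\ref{cancellable components}: the graph associated to $\tilde{B}_1,\ldots,\tilde{B}_n$ is $G$ itself, whose leaves meet $\partial R(g)$ exactly, and $\{g(\stackrel{\circ}{\tilde{B}_k})\}$ are by construction the components of $R(g)\setminus G$; the preimage-boundary condition $(g\mid_{\tilde{B}_k})^{-1}(g(\tilde{B}_k)\cap \partial R(g)) \subseteq \partial \Sigma_0$ follows from the homeomorphism established above together with $\tilde{l}_k$ lying in $\partial \Sigma_0 \cup G'$ and the leaves of $G'$ being in $\partial \Sigma_0$ by Case~\ref{case 2}(iii). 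Independence of the cancellable domains from the choice of $p_1,\ldots,p_n,\tilde{p}_1,\ldots,\tilde{p}_n$ is then immediate, since $\tilde{B}_k$ is intrinsically the component of $g^{-1}(B_k)$ on which $g$ restricts to a homeomorphism, and $B_k$ depends only on the component of $R(g)\setminus G$ and not on the interior marked point $p_k$; I would close by citing the corresponding sentence in the proof of Lemma~\ref{case 1 cancellation}.
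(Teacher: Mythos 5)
Your proposal is correct and follows essentially the same route as the paper: fix $B_k$ and $\tilde B_k$, identify the spine $l_k\subseteq (g(G')\cup\partial R(g))\setminus N$ with its lift $\tilde l_k$, cut $B_k$ along $g(G')$ and reduce each resulting piece to the argument of Lemma~\ref{case 1 cancellation}, then conclude cancellability and independence. Your explicit verification of the local picture at branch points via condition (v) of Case~\ref{case 2} is a point the paper leaves implicit, but it is an elaboration of the same argument rather than a different approach.
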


\begin{proof}
	Fix $k \in \{1,2,\ldots,n\}$.
	Let $B_k$ be the component of $R(g) \setminus G$ containing $p_k$.
	Let $\tilde{B}_k$ be the component of $g^{-1}(B_k)$ containing $\tilde{p}_k$.
	
	First,
	we prove that $\tilde{B}_k$ is mapped homeomorphically to $B_k$ by $g$.
	Assume $l_k$ is the component of $(g(G^{'}) \cup \partial R(g)) \setminus N$ such that
	$l_k \subseteq B_k$.
	Let $\tilde{l}_k = g^{-1}(l_k) \cap (\partial \Sigma_0 \cup G^{'})$,
	then $\tilde{l}_k \subseteq \tilde{B}_k$.
	We cut off $g(G^{'}) \cap B_k$ from $B_k$ and obtain $B^{'}_{k}$.
	Similar to the proof of Lemma \ref{case 1 cancellation},
	for each component $B_0$ of $B^{'}_{k}$,
	if $\tilde{B}_0$ is the component of $g^{-1}(B_0)$ such that $\tilde{B}_0 \subseteq \tilde{B}_k$,
	then $\tilde{B}_0$ is mapped homeomorphically to $B_0$ by $g$.
	
	Easily,
	$\tilde{B}_1,\ldots,\tilde{B}_n$ are cancellable,
	and they are independent of the choice of $p_1,p_2,\ldots,p_n,\tilde{p}_1,\tilde{p}_2,$ $\ldots,\tilde{p}_n$.
\end{proof}

With above conditions,
the cancellable domains given by
$(\{p_1,p_2,\ldots,p_n\},G,\{\tilde{p}_1,\tilde{p}_2,\ldots,\tilde{p}_n\})$
(which are determined by $G$)
are called
the \emph{$(G,g(G^{'}))$-cancellable domains}.
Hence $G$ and $g(G^{'})$ define a cancellation $(g,\Sigma_0) \stackrel{(G,g(G^{'}))}{\leadsto} (g_1,\Sigma_1)$.
Moreover,
the cancellation is determined uniquely by $G$ and $g(G^{'})$ ($G^{'}$ is determined uniquely by $g(G^{'})$, since (v) of Case \ref{case 2}).

\begin{lm}\label{case 2 subgraph}\rm
Assume $A_1,A_2,\ldots,A_n$ are $(G,g(G^{'}))$-cancellable domains, then $G(A_1,A_2,\ldots,A_n) \in sub_{g(G^{'})}(G)$.
\end{lm}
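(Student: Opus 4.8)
The plan is to show that the graph $G(A_1,\ldots,A_n)$, which is the union of those edges of $G$ having two distinct preimages among the $\partial A_i$, is a $g(G')$-good subgraph of $G$. By Definition \ref{case 2 graph} (iii), this requires cutting off $g(G')\cap G$ from $G$ to obtain $G'$ (I will call it $\widehat{G}$ to avoid clashing with the graph $G'$ in $\Sigma_0$), and then verifying that for each component $L$ of $\widehat{G}$, the intersection $G(A_1,\ldots,A_n)\cap L$ is a good subgraph of $L$ in the sense of Definition \ref{good subgraph}. So the whole statement reduces to two checks on each such component $L$: first that every vertex of $G(A_1,\ldots,A_n)\cap L$ has degree at least $1$ (i.e.\ no isolated vertices survive), and second that $G(A_1,\ldots,A_n)\cap L$ and $L$ have \emph{exactly the same} set of degree-one vertices (leaves).

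First I would set up the local picture. Recall that $G$ is a thin $(R(g),g(G'),N)$-trivalent graph, so by Definition \ref{case 2 graph} (ii) it decomposes as $G=\bigcup_k i_k(G_k)$, where each $G_k$ is an ordinary $(a_k,P_k)$-trivalent graph in the piece $a_k$ obtained by cutting $g(G')$ out of $R(g)$. The components $L$ of $\widehat{G}$ correspond precisely to these pieces $i_k(G_k)$ lying in the interior of the $a_k$, so the problem genuinely localizes to a single $(a_k,P_k)$-trivalent graph, where $P_k$ consists of the leaves sitting on $\partial a_k$. This is exactly the Case \ref{case 1} situation transported inside one region, so I expect to reduce Lemma \ref{case 2 subgraph} to Lemma \ref{case 1 subgraph} applied piecewise; the main content of Lemma \ref{case 1 subgraph} is that canceling $G$-cancellable domains produces a good subgraph, and the cutting construction in Definition \ref{case 2 graph} is designed precisely so that the pieces behave like the Case \ref{case 1} graphs.

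The heart of the matter is identifying which edges of $L$ belong to $G(A_1,\ldots,A_n)$. An edge $s\in E(G)$ lies in $G(A_1,\ldots,A_n)$ exactly when it has two distinct preimage arcs among the $\partial A_i$; equivalently, when the two components of $R(g)\setminus G$ adjacent to $s$ have lifts $\tilde B, \tilde B'$ that meet along distinct arcs over $s$. An edge is \emph{excluded} precisely when the two adjacent domains glue up to the same arc in $\Sigma_0$, which (by the homeomorphic-lifting established in the proof of Lemma \ref{case 2 cancellation}) happens only along the part of $G$ that is redundant for reconstructing $\Sigma_0$, i.e.\ along edges one can collapse without changing the gluing. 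I would argue that the retained edges are exactly a deformation retract of $L$ rel its leaves $P_k$: the trivalent-graph condition guarantees $\bigcup_{e\cap P_k=\emptyset} e$ is a deformation retract of $a_k$, and an edge is dropped from $G(A_1,\ldots,A_n)$ only if both its endpoints can still be reached, forcing the leaf set to be preserved and no vertex to become isolated.

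The step I expect to be the main obstacle is the leaf-matching condition, namely proving $\{v\in V(L): \deg_L v=1\}=\{v\in V(G(A_1,\ldots,A_n)\cap L): \deg=1\}$. The delicate direction is showing that no leaf of $L$ is dropped and that no new leaf is created: a vertex that was trivalent in $L$ could a priori lose two of its three incident edges to the ``same-arc'' identification and thereby become a spurious leaf, and one must rule this out using condition (v) of Case \ref{case 2} (the clockwise order $a',c',b'$ of edges around a critical point) together with the fact that $g$ embeds each $\tilde B_k$ homeomorphically onto $B_k$. This is the place where the branch-point combinatorics of $g(G')$ enter, and I would handle it by a careful local analysis at each trivalent vertex of $G$, distinguishing vertices lying on $g(G')$ (where condition (v) controls the cyclic arrangement of the adjacent domains and hence which incident edges survive) from ordinary interior vertices (where the argument is identical to Lemma \ref{case 1 subgraph}). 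Once this local leaf-preservation is established at every vertex, the two defining conditions of a good subgraph follow on each $L$, and assembling them over all components yields $G(A_1,\ldots,A_n)\in sub_{g(G')}(G)$.
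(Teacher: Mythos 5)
The paper states this lemma (like Lemma \ref{case 1 subgraph}) without any proof, so there is no argument of the author's to compare yours against; judged on its own, your proposal correctly unwinds Definitions \ref{good subgraph} and \ref{case 2 graph} (iii) and correctly locates where condition (v) of Case \ref{case 2} must enter, but it stops short of proving either of the two local statements to which the lemma reduces, and the one concrete mechanism you do offer is false. The two statements are: (a) at every internal vertex of $G$ of degree $2$ or $3$, the number of incident edges retained in $G(A_1,\ldots,A_n)$ is never exactly $1$ (so no spurious leaf is created); (b) at every leaf of $G$, i.e.\ at every point of $N$ (a node of $g(\partial \Sigma_0)$ on $\partial R(g)$ or a branch point), the unique incident edge is retained. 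For (a) the needed argument is a short counting one: the sectors at such a vertex $w$ lift into the corresponding cancellable domains at preimages $\tilde w_1,\tilde w_2,\tilde w_3$ of $w$, an incident edge is dropped exactly when its two neighbouring sectors lift to the \emph{same} preimage, and among three points exactly one of the three relations $\tilde w_1\ne\tilde w_2$, $\tilde w_2\ne\tilde w_3$, $\tilde w_3\ne\tilde w_1$ can never hold in isolation. You never state this; instead you assert that the retained edges form a deformation retract of $L$ rel its leaves, which is not true and would prove too much --- a subgraph $G(A_1,\ldots,A_n)$ can drop an entire edge of a cycle (the paper's Example $2$, where the realized $H_2=G(\cdots)$ is any two of the three edges of a theta-shaped $\tilde H_2$, is exactly such a case), and such a subgraph is a good subgraph but not a deformation retract. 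For (b) at a branch point you explicitly defer the computation (``I would handle it by a careful local analysis''); but this is precisely the new content of the Case \ref{case 2} lemma over the Case \ref{case 1} one. One must write down the $z\mapsto z^2$ model at the critical point $x$, use condition (v) to place the three lifted edges in the order $a',c',b'$, and check that the two cancellable domains adjacent to each of the three $G$-edges ending at the branch point lift to boundary arcs emanating from different angular positions at $x$, hence are distinct arcs of $\Sigma_0$. This check is not optional: with the opposite cyclic order the two adjacent domains lift to the \emph{same} arc and the edge is dropped, so the conclusion genuinely hinges on (v).

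A secondary inaccuracy: your claim that the problem ``genuinely localizes'' to a single piece $a_k$, reducing to Lemma \ref{case 1 subgraph} applied piecewise, is not quite right. The cancellable domains of Lemma \ref{case 2 cancellation} are components of $R(g)\setminus G$ each containing an open edge of $g(G')$, so they straddle the cut along $g(G')$ and do not restrict to single pieces $a_k$; only the internal-vertex count (a) is literally the Case \ref{case 1} argument, while the leaf analysis (b) couples adjacent pieces through the critical point. Moreover Lemma \ref{case 1 subgraph} is itself unproved in the paper, so citing it piecewise does not discharge the obligation.
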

	
	\section{Inscribed set}\ \label{section 5}
	
	This section introduces a way to realize the extensions of an immersed circle in the surfaces.
	Fix an immersed circle in the surface and a normal numbering $\psi$ of it,
	\emph{inscribed maps} (Definition \ref{inscribed map}) are maps established by some graphs under certain conditions.
	We define an \emph{inscribed set} $\zeta$ (Definition \ref{inscribed set})
	and $I(\zeta) \subseteq \zeta$ (Definition \ref{good graph set}).
	The elements of $I(\zeta)$ can be realized to extensions of the immersed circle by inscribed maps (Lemma \ref{realization}).
	
	\begin{defn}[Inscribed map]\label{inscribed map}\rm
		For a closed oriented surface $\Sigma$,
		let $f: S^{1} \to \Sigma$ be a homologically trivial immersion.
		Let $\psi$ be a normal numbering of $f$.
		Let $n = \max \psi$.
		Let $G_2 \subseteq D_2(f,\psi), G_3 \subseteq D_3(f,\psi),\ldots, G_n \subseteq D_n(f,\psi)$ be embedded graphs such that $V_k(f,\psi) = G_k \cap \partial D_k(f,\psi) = \{v \mid deg_{G_k}(v) = 1\}$,
		and $G_k \cap G_{k-1} = V(G_k) \cap V(G_{k-1})$, $\forall k \in \{2,3,\ldots,n\}$ ($G_1 = \emptyset$).
		Assume $g_k: D_k \to \Sigma$ is an embedding such that $g_k(D_k) = D_k(f,\psi)$, $\forall k \in \{1,2,\ldots,n\}$.
		There are graphs $A_k, B_k \subseteq D_k$ such that
		$g_k(A_k) = G_k, g_k(B_k) = G_{k+1}$,
		and $a_k: (V(A_k),E(A_k)) \to (V(G_k),E(G_k))$, $b_k: (V(B_k),E(B_k)) \to (V(G_{k+1}),E(G_{k+1}))$ are isomorphisms induced by $g_k$,
		$\forall k \in \{1,2,\ldots,n\}$ ($G_{n+1} = \emptyset$).
		Let $F = \coprod_{k=1}^{n} D_k$
		and $g: F \to \Sigma$ a map such that $g \mid_{D_k} = g_k, \forall k \in \{1,2,\ldots,n\}$.
		We obtain a map by following procedure:
		
		We cut off (which means to delete the set from the space and do a path compactification) $(\bigcup_{k=2}^{n}A_k) \cup (\bigcup_{k=1}^{n-1}B_k)$ from $F$,
		and obtain an identification space $F_0$.
		$g_0: F_0 \to \Sigma$ is induced by $g: F \to \Sigma$.
		For all $k \in \{2,\ldots,n\}$ and $e \in E(G_k)$,
		$a^{-1}_{k}(e)$ is cut off in $D_{k}$
		and becomes $2$ copies $e_{1}^{+}$ (in the left) and $e_{1}^{-}$ (in the right),
		$b^{-1}_{k-1}(e)$ is cut off in $D_{k-1}$
		and becomes $e_{2}^{+}$ (in the left) and $e_{2}^{-}$ (in the right).
		Let $h$ be the equivalence relation such that $x \stackrel{h}{\sim} y$ if
		$x \in e_{1}^{+}, y \in e_{2}^{-}, g_0(x) = g_0(y)$ or $x \in e_{2}^{+}, y \in e_{1}^{-}, g_0(x) = g_0(y)$,
		$\forall e \in \bigcup_{k=2}^{n} E(G_k)$.
		Let $F_1 = F_0 /\sim_h$,
		and $g_1: F_1 \to \Sigma$ be induced by $g$.
		$g_1$ is called an \emph{inscribed map} of $(f,\psi)$
		\emph{associated} to $\{G_2,\ldots,G_n\}$.
	\end{defn}

Actually,
the inscribed map $g_1: F_1 \to \Sigma$ is a polymersion (but we can't ensure it's different critical points mapped to different branch points).
$F_1$ is a surface that may be disconnected,
and $g_1 \mid_{\partial F_1} = f$.
We construct the sets of graphs whose inscribed maps are immersions of connected surfaces in the remainder of this subsection.

	\begin{defn}[Inscribed set]\label{inscribed set}\rm 
		For a closed oriented surface $\Sigma$,
		let $f: S^{1} \to \Sigma$ be a homologically trivial immersion 
		and $\psi$ a normal numbering of $f$.
		Let $n = \max \psi$.
		The following process is to obtain 
		an \emph{inscribed set} $\zeta_1$ and the \emph{$k$th-inscribed set} $\zeta_k$ ($2 \leqslant k \leqslant n$):
		
		We induce decreasingly on $k$.
		For $k = n$:
		if $(D_n(f,\psi),V_n(f,\psi))$ is appropriate,
		then there exists $\tilde{H}_n$ a $(D_n(f,\psi),V_n(f,\psi))$-trivalent graph.
		Set $\zeta_n = \{(\tilde{H}_n,H_n) \mid H_n \in sub(\tilde{H}_n)\}$.
		If $(D_n(f,\psi),V_n(f,\psi))$ is not appropriate,
		then we set $\zeta_n = \emptyset$.
		
		Now assume we have defined $\zeta_n, \zeta_{n-1}, \ldots, \zeta_{k+1}$ after $n-k$ steps,
		$1 \leqslant k \leqslant n-1$.
		For step $n-k+1$:
		if $\zeta_{k+1} = \emptyset$,
		set $\zeta_{k} = \emptyset$.
		If $\zeta_{k+1} \ne \emptyset$,
		then $\zeta_k$ is obtained as follows:
		for each $=\{(\tilde{H}_{k+1},H_{k+1}), \ldots, (\tilde{H}_n,H_n)\} \in \zeta_{k+1}$,
		let $N_j = \{x \mid x \in V(H_j), deg(x) = 3\} \setminus N_{j+1}$,
		$\forall k+1 \leqslant j \leqslant n$ ($N_{n+1} = \emptyset$).
		Then:
		
		$\bullet$
		If $(D_k(f,\psi),H_{k+1},N_{k+1} \cup V_k(f,\psi))$ is appropriate,
		set $\tilde{H}_k$ to be a thin $(D_k(f,\psi),H_{k+1},N_{k+1} \cup V_k(f,\psi))$-trivalent graph.
		Let
		$Q(\{(\tilde{H}_{k+1},H_{k+1}), \ldots, (\tilde{H}_n,H_n)\}) =
		\{(\tilde{H}_k,H_k) \mid H_k \in sub_{H_{k+1}}(\tilde{H}_k)\}$.
		
		$\bullet$
		If $(D_k(f,\psi),H_{k+1},N_{k+1} \cup V_k(f,\psi))$ is not appropriate,
		set $Q(\{(\tilde{H}_{k+1},H_{k+1}), \ldots, (\tilde{H}_n,H_n)\}) = \emptyset$.
		
		Set $\zeta_k = \bigcup_{X \in \zeta_{k+1}, Q(X) \ne \emptyset} \bigcup_{Y \in Q(X)} (X \cup Y)$.
	\end{defn}
	
	\begin{remark}\label{remark of graph presentation}\rm
	    For simplicity,
	    we will always denote an inscribed set by $\zeta$ instead of $\zeta_1$,
	    and denote by $\zeta_k$ the $k$th-inscribed set obtained by the procedure to obtain $\zeta$ (determined by $\zeta$ uniquely),
	    $\forall k \in \{2,3,\ldots,n\}$.
	\end{remark}

	\begin{defn}\label{good graph set}\rm
		For a closed oriented surface $\Sigma$,
		let $f: S^{1} \to \Sigma$ be a homologically trivial immersion 
		and $\psi$ a normal numbering of $f$.
		Let $\zeta$ be an inscribed set of $(f,\psi)$.
		An element $\{(\tilde{H}_1,H_1),\ldots,(\tilde{H}_n,H_n)\} \in \zeta$ is \emph{good}
		if $H_1 = \emptyset$,
		and $H_2,H_3,\ldots,H_n \ne \emptyset$ (when $n \geqslant 1$).
		Let $I(\zeta) = \{X \mid X \in \zeta,X$ $is$ $good\}$.
	\end{defn}

For each $\{(\tilde{H}_1,H_1),\ldots,(\tilde{H}_n,H_n)\} \in I(\zeta)$,
we can verify that
$g: S \to \Sigma$ is an inscribed map of $f$ associated to $H_2,\ldots,H_n$
if and only if
there exists a sequence of cancellation operations
$(g,S) \stackrel{\tilde{H}_n}{\leadsto} (g_n,S_n) 
\stackrel{(\tilde{H}_{n-1},H_n)}{\leadsto} 
(g_{n-1},S_{n-1}) \stackrel{(\tilde{H}_{n-2},H_{n-1})}{\leadsto} \ldots 
\stackrel{(\tilde{H}_{1},H_2)}{\leadsto} (g_1,S_1)$
(where $(g_k,S_k) \stackrel{(\tilde{H}_{k-1},H_k)}{\leadsto} (g_{k-1},S_{k-1})$ is the cancellation of $(\tilde{H}_{k-1},H_k)$-cancellable domains in $g_k$, 
and $g_1$ is an embedding).

\begin{lm}\label{realization}
	If $\{(\tilde{H}_1,H_1),\ldots,(\tilde{H}_n,H_n)\} \in I(\zeta)$,
	$g: S \to \Sigma$ is an inscirbed map of $(f,\psi)$ associated to $\{H_2,\ldots,H_n\}$,
	then $g$ is an immersion.
	Moreover,
	$g$ is an extension of $f$ related to $\psi$ (which means $S$ is a connected surface).
\end{lm}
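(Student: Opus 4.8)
The plan is to verify the two substantive claims separately: that $g$ has no branch point (so that the polymersion $g$ is in fact an immersion) and that $S$ is connected. Everything else is already in place: by Definition~\ref{inscribed map} and the discussion following it, $g\colon S\to\Sigma$ is a polymersion with $g\mid_{\partial S}=f$ whose interior is carried to the left of $f(S^{1})$, and by Lemma~\ref{cancellation-well} each surface appearing below is compact and orientable. I also note that once connectedness and the immersion property are known, the ``related to $\psi$'' assertion is automatic: a point $x$ of a component $A_j$ of $\Sigma\setminus f(S^{1})$ with $\psi(A_j)=i$ is covered exactly by the sheets $D_1,\dots,D_i$, so $\#(g^{-1}(x))=i=\psi(A_j)$ and hence $\psi_g=\psi$.

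The main tool is the cancellation sequence recorded just before the statement: for the good element $\{(\tilde H_1,H_1),\dots,(\tilde H_n,H_n)\}\in I(\zeta)$ the inscribed map admits $(g,S)\stackrel{\tilde H_n}{\leadsto}(g_n,S_n)\leadsto\cdots\stackrel{(\tilde H_1,H_2)}{\leadsto}(g_1,S_1)$ with $g_1$ an embedding, each step being the cancellation of $(\tilde H_{k-1},H_k)$-cancellable domains. I would run a reverse induction along this sequence, anchored at the embedding $g_1$, in which the inductive step analyses what happens when the cancelled domains are restored.

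For the immersion property this reverse step is the crux. Because the cancellable domains have interiors embedded by $g$ (Definition~\ref{cancellable components}), they contain no critical point, so every critical point of $g_k$ must sit over a vertex of the gluing graph $\tilde H_{k-1}$; away from such vertices $g$ is visibly an immersion, since along the interior of an edge two half-disks with matching images are glued into one embedded disk. Thus the whole question reduces to the local model of $g$ at the preimage of each degree-$3$ vertex, assembled from the consecutive sheets $D_k$ and $D_{k-1}$. Here conditions (iv) and (v) of Case~\ref{case 2}---in particular the prescribed cyclic order $a',c',b'$ of the three edges around the critical point---are exactly what force these sheets to fit together into a single unbranched disk rather than a $z\mapsto z^{m}$ fold, and the bound $\deg\le 3$ keeps the bookkeeping finite. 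I expect this local orientation computation at the trivalent vertices to be the principal obstacle; the remainder of the immersion argument is formal.

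For connectedness I would use that $\partial S=f(S^{1})$ is a single circle, so at most one component of $S$ has nonempty boundary and it suffices to exclude closed components. Restoring cancelled domains never creates an isolated new piece, since each restored domain is attached to the surface already built along the graph $\tilde H_{k-1}$; the deformation-retract clause in Definition~\ref{case 1 graph} guarantees this graph meets every non-disk component of the restored domains, and the goodness hypothesis $H_1=\emptyset$, $H_2,\dots,H_n\ne\emptyset$ (Definition~\ref{good graph set}) guarantees the gluing at every level is nonempty. Propagating this up the sequence shows the number of components drops to one, i.e. $S$ is connected. Combining this with the immersion property and the sheet count above yields that $g$ is an extension of $f$ related to $\psi$.
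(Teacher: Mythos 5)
Your connectedness argument is, in substance, the paper's argument reorganized along the cancellation sequence rather than carried out directly on the glued space $F_1$: the paper shows that every point of the cut sheet $D'_k$ is joined inside $F_1$ either to the next sheet $D'_{k+1}$ (via $H_{k+1}$) or to $\partial S$, so that chaining upward terminates on $\partial S$, and then uses that $\partial S$ is a single circle. Your version needs one repair: a restored domain lying over a disk component of $D_{k-1}(f,\psi)$ disjoint from $\tilde H_{k-1}$ is \emph{not} attached to the previously built surface along the graph at all --- it is attached only along boundary arcs lying over $f(S^{1})$ --- so ``restoring never creates an isolated new piece'' is literally false at such a step, and you must fall back, as you partly do, on the fact that every component of $S$ carries part of $\partial S$ while $\partial S$ is connected. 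With that adjustment the two arguments coincide in content.

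The immersion half is where the genuine gap lies. The paper's own proof declines to address it (``We only prove that $S$ is a connected surface''), so you are attempting more than the paper does, but your plan stops exactly at the decisive point: you correctly reduce the claim to the local model of $g$ at the preimage of a trivalent vertex and then defer that computation as ``the principal obstacle.'' Moreover, the tool you propose to invoke --- conditions (iv) and (v) of Case \ref{case 2} --- is not available here: those are hypotheses about a given polymersion together with an embedded graph $G'\subseteq\Sigma_0$, which the paper verifies for a genuine immersion inside the induction of Proposition \ref{surjective}; they are not properties you may quote for the abstract inscribed map built from $H_2,\dots,H_n$. What actually has to be used is the structure of Definition \ref{inscribed set}: every degree-$3$ vertex of $H_{k+1}$ (the set $N_{k+1}$) is forced into the marked-point set of $\tilde H_k$, hence is a leaf of $\tilde H_k$ and of every good subgraph $H_k$, so the sheet $D_{k-1}$ is glued in along exactly one further edge at that point; one must then check that the resulting cyclic arrangement of sectors coming from $D_{k+1}$, $D_k$ and $D_{k-1}$ closes up with total angle $2\pi$ rather than $4\pi$ or $6\pi$. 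That verification is the entire content of the immersion statement and is absent from your proposal (as it is from the paper's proof).
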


\begin{proof}
	We only prove that $S$ is a connected surface.
	Let $h$ be the equivalence class consistent with Definition \ref{inscribed map}
	and $h_*: F_0 \to F_1$ the identification map ($h_*$ is surjective).
	We denote by $D^{'}_{k} \subseteq F_0$ the space obtained by cutting off $A_k \cup B_k$ from $D_k$,
	$\forall k \in \{1,2,\ldots,n\}$.
	
	Assume $X$ is a component of $D_k(f,\psi) \setminus (H_k \cup H_{k+1})$,
	$k \in \{1,2,\ldots,n\}$ ($H_{n+1} = \emptyset$),
	then $\overline{X} \cap (H_{k+1} \cup \partial D_k(f,\psi)) \ne \emptyset$.
	So for each $x \in D^{'}_{k}$,
	$h_*(x)$ is connected to a point in $h_*(D^{'}_{k+1}) \cup \partial S$,
	$k \in \{1,2,\ldots,n\}$ ($D^{'}_{n+1} = \emptyset$).
	Hence each point in $h_*(F_0) = F_1$ is connected to $\partial S$.
	So $S$ is connected, since $\partial S$ has exactly one boundary component.
\end{proof}

$g$ is said to be an inscribed map of $(f,\psi)$ \emph{to realize} $\{(\tilde{H}_1,H_1),\ldots,(\tilde{H}_n,H_n)\}$,
or an inscribed map of $(f,\psi)$ \emph {related to} $\{(\tilde{H}_1,H_1),\ldots,(\tilde{H}_n,H_n)\}$.
Moreover,
Lemma \ref{realization} defines a map from $I(\zeta)$ to the set of equivalence classes of extensions of $f$ related to $\psi$.
   
    \section{The proof of Theorem 1}\ \label{section 6}
    
    We prove Theorem \ref{main} in this section.
    Given a closed oriented surface $\Sigma$ and an immersion $f: S^{1} \to \Sigma$.
    Let $\psi$ be a normal numbering of $f$
    and $\zeta$ an inscribed set  of $(f,\zeta)$.
    Let $E(f,\psi)$ be the set of equivalence classes of extensions of $f$ related to $\psi$.
    Lemma \ref{realization} provides a map $i: I(\zeta) \to E(f,\psi)$ sending each $X \in I(\zeta)$ to the equivalence class of the inscribed map of $(f,\psi)$ related to $X$.
    We prove $i: I(\zeta) \to E(f,\psi)$ is a bijection in this section.
    Lemma \ref{inequivalent} proves $i: I(\zeta) \to E(f,\psi)$ is injective,
    and Proposition \ref{surjective} proves $i: I(\zeta) \to E(f,\psi)$ is surjective.
    Lemma \ref{realization} and Proposition \ref{surjective}
    conclude $i: I(\zeta) \to E(f,\psi)$ is a bijection,
    hence Theorem \ref{main} is proved.
    
    \begin{lm}\label{inequivalent}
    	For a closed oriented surface $\Sigma$,
    	let $f: S^{1} \to \Sigma$ be a homologically trivial immersion
    	and $\psi$ a normal numbering of $f$. 
    	Assume $n = \max \psi$.
    	If $\zeta$ is an inscribed set of $(f,\psi)$,
    	and $g_1,g_2: S \to \Sigma$ are two inscribed map of $(f,\psi)$ related to two different elements of $I(\zeta)$,
    	then $g_1,g_2$ are inequivalent.
    \end{lm}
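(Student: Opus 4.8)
The plan is to argue by contradiction: assume a diffeomorphism $h: S \to S$ with $h\mid_{\partial S} = id$ and $g_1 = g_2 \circ h$, and deduce that the two elements $X^{(1)} = \{(\tilde{H}_1^{(1)},H_1^{(1)}),\ldots,(\tilde{H}_n^{(1)},H_n^{(1)})\}$ and $X^{(2)} = \{(\tilde{H}_1^{(2)},H_1^{(2)}),\ldots,(\tilde{H}_n^{(2)},H_n^{(2)})\}$ of $I(\zeta)$ coincide, contradicting that they are distinct. The first reduction is purely combinatorial: by the recursive construction in Definition \ref{inscribed set}, each $\tilde{H}_k$ is a function of the pairs produced at the later levels, hence ultimately of $H_{k+1},\ldots,H_n$ alone (the base graph $\tilde{H}_n$ being fixed). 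Consequently an element of $I(\zeta)$ is determined by its tuple $(H_2,\ldots,H_n)$ of good subgraphs (with $H_1 = \emptyset$). So it suffices to prove that each $H_k \subseteq D_k(f,\psi) \subseteq \Sigma$ is determined by the equivalence class of the inscribed map; then $H_k^{(1)} = H_k^{(2)}$ for all $k$, forcing $X^{(1)} = X^{(2)}$.

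To recover $H_k$ from $g$ alone I would use the following intrinsic data, all of which are invariant under precomposition by the self-diffeomorphism $h$, since $g_1$ and $g_2 \circ h$ are literally the same map $S \to \Sigma$ and $h$ carries $g_2$-sheets to $g_1$-sheets over each point of $\Sigma$. First, the multiplicity function $p \mapsto \#(g^{-1}(p))$ equals $\psi$ on $\Sigma \setminus f(S^{1})$ and fixes the nested regions $D_k(f,\psi)$, so every branch point of $g$ acquires a well-defined \emph{level} $k$ (the depth at which its two merging sheets are the $(k-1)$st and $k$th). Second, the leaves of $H_k$ are exactly $V_k(f,\psi)$, which depends only on $(f,\psi)$. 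Third, the degree-$3$ vertices of $H_k$ are precisely the level-$k$ branch points of $g$: around such a vertex all three incident edges of $H_k$ carry a transposition of sheets $k-1,k$, so the monodromy of the pair $\{k-1,k\}$ is nontrivial and produces a $z \mapsto z^{2}$ singularity, matching condition (v) of Case \ref{case 2}. Finally, an edge $e$ of $\tilde{H}_k$ lies in $H_k$ if and only if sheets $k-1$ and $k$ are interchanged across $e$; this sheet-swap is read off from the local gluing pattern of Definition \ref{inscribed map} and is preserved by $h$. Assembling the leaves, the degree-$3$ vertices, and the edge set, $H_k$ is pinned down as a good subgraph of $\tilde{H}_k$ in the sense of Definition \ref{good graph set}.

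I expect the main obstacle to be the recovery of the \emph{edges} of $H_k$ rather than its vertices. Along the interior of an edge of $H_k$ the inscribed map is a local immersion (the two left/right identifications of Definition \ref{inscribed map} reassemble a full neighborhood), so the seam is \emph{transparent}: it is not visible as any local singularity of $g$ and cannot be detected pointwise. Hence the argument must be global, showing that the sheet-interchange of the pair $\{k-1,k\}$ across each edge of $\tilde{H}_k$ is a well-defined invariant of the abstract pair $(S,g)$ up to equivalence, and that this invariant distinguishes the finitely many good subgraphs of $\tilde{H}_k$. Once this is established, the two inscribed maps related to distinct elements of $I(\zeta)$ must differ at some level $k$ in their swap-pattern, so no boundary-fixing $h$ can satisfy $g_1 = g_2 \circ h$, and $g_1,g_2$ are inequivalent. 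Alternatively, the same conclusion can be organized through the canonical cancellation sequence preceding Lemma \ref{realization}: the associated graphs of the regular cancellations (Definition \ref{regular}) embed into the intermediate surfaces and recover the pairs $(\tilde{H}_{k-1},H_k)$ level by level, a structure that any equivalence must respect.
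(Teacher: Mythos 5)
Your reduction is the same one the paper makes: take the largest $k$ with $H_k \ne G_k$ (so that $\tilde{H}_k = \tilde{G}_k$, the $\tilde{H}$'s being determined by the later-level data), and then show that whether a given edge $e_0$ of $\tilde{H}_k$ is a seam of the gluing is an invariant of the equivalence class of the inscribed map. But your proposal stops exactly where the content of the lemma begins. You correctly observe that the seam is \emph{transparent} --- along the interior of an edge of $H_k$ the inscribed map is a local immersion, so nothing pointwise detects it --- and you then assert that ``the sheet-interchange of the pair $\{k-1,k\}$ across each edge is a well-defined invariant of the abstract pair $(S,g)$ up to equivalence,'' adding ``once this is established\ldots''. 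That assertion is the whole difficulty. For an immersion $g: S \to \Sigma$ the fibre over a point with $m$ preimages is an unordered $m$-point set; there is no intrinsic labelling of preimages as ``sheet $k-1$'' and ``sheet $k$,'' so the monodromy of that pair is not even defined until you supply such a labelling, let alone shown invariant. (Relatedly, the degree-$3$ vertices of $H_k$ are \emph{not} branch points of $g$ itself --- $g$ is an immersion by Lemma \ref{realization}; they become branch points only of the intermediate polymersions in the cancellation sequence, so they too cannot be read off from $g$ pointwise.)

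The paper closes this gap by anchoring everything to $\partial S$, which any equivalence fixes pointwise. It picks $s \in e_0$ and builds two arcs $h_1, h_2$ in $\Sigma$ from $s$ that descend through the nested complementary regions ($a_{i+1} \subseteq D_+(a_i)$ on one side, $b_{i+1} \subseteq D_-(b_i)$ on the other) until they reach points $p_1, p_2$ of $f(S^1)$ on the boundary of the relevant regions $D_m(f,\psi)$, $D_q(f,\psi)$. These arcs admit lifts in $S$ whose endpoints are the \emph{unique} boundary preimages of $p_1,p_2$, hence are pinned down by the map up to equivalence; whether $e_0$ is glued as a seam is then detected by whether the two lifts meet over $h_1([0,1]) \cap h_2([0,1])$ --- they are disjoint for the map with $e_0 \in E(H_k)$ and share a properly embedded arc for the map with $e_0 \notin E(G_k)$. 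Some such boundary-anchored construction (or an equivalent global argument along your alternative suggestion via the cancellation sequence) is precisely what your proof still needs to supply; as written, the key invariance claim is stated but not proved.
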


\begin{proof}
	Suppose $g_1$ is related to $\{(\tilde{H}_1,H_1),\ldots,(\tilde{H}_n,H_n)\} \in I(\zeta)$,
	$g_2$ is related to $\{(\tilde{G}_1,G_1),\ldots,(\tilde{G}_n,G_n)\} \in I(\zeta)$.
	Then $\{(\tilde{H}_1,H_1),\ldots,(\tilde{H}_n,H_n)\} \ne \{(\tilde{G}_1,G_1),\ldots,(\tilde{G}_n,G_n)\}$.
	There exists $k \in \{2,3,\ldots,n\}$
	such that $H_k \ne G_k$ and $H_i = G_i$, $\forall k+1 \leqslant i \leqslant n$.
	So $\tilde{H}_k = \tilde{G}_k$.
	There exists $e_0 \in E(\tilde{H}_k)$,
	$e_0$ is in exactly one of $E(H_k),E(G_k)$.
	Assume without loss of generality that $e_0 \in E(H_k), e_0 \notin E(G_k)$.
	
	For each $e \in E(H_i) \subseteq E(\tilde{H_i})$, $\forall i \in \{2,\ldots,n\}$,
	we denote by $D_+(e)$ (respectively $D_-(e)$) the closure of the component of $D_i(f,\psi) \setminus (\tilde{H_i} \cup H_{i+1})$ which lie in the left side (respectively the right side) of $e$.
	Then $\partial D_+(e) \cap (H_{i+1} \cup \partial D_{i}(f,\psi)), \partial D_-(e) \cap (H_{i+1} \cup \partial D_{i}(f,\psi)) \ne \emptyset$.
	
	Choose $s \in e_0$.
	There exist $m \in \{k,k+1,\ldots,n\}$ and $p_1 \in \partial D_m(f,\psi) \setminus V_m(f,\psi)$,
	such that $\exists a_0 = e_0, a_1 \in E(H_{k+1}), a_2 \in E(H_{k+2}), \ldots, a_{m-k} \in E(H_m), a_{m-k+1} = \{p_1\}$, 
	$a_{i+1} \subseteq D_{+}(a_{i}), \forall i \in \{0,1,2,\ldots,m-k\}$.
	Let $h_1: [0,1] \to \Sigma$ be an immersion such that
	$h_1(0) = s, h_1(1) = p_1$,
	and $\exists 0=t_0<t_1<t_2<\ldots<t_{m-k}<t_{m-k+1}=1$ such that
	$h_1(t_i) \in a_i$, 
	and $[t_i,t_{i+1}]$ is homeomorphically embedded into $D_+(a_i)$ by $h_1$,
	$\forall i \in \{0,1,\ldots,m-k\}$.
	Similarly, there exist $q \in \{k,k+1,\ldots,n\}$ and $p_2 \in \partial D_q(f,\psi) \setminus V_q(f,\psi)$,
	such that $\exists b_0 = e_0, b_1 \in E(H_{k+1}), b_2 \in E(H_{k+2}), \ldots, b_{q-k} \in E(H_q), b_{q-k+1} = \{p_2\}$, 
	$b_{i+1} \subseteq D_{-}(b_{i}), \forall i \in \{0,1,2,\ldots,q-k\}$.
	And let $h_2: [0,1] \to \Sigma$ be an immersion such that
	$h_2(0) = s, h_2(1) = p_2$,
	and $\exists 0=j_0<j_1<j_2<\ldots<j_{q-k}<j_{q-k+1}=1$ such that
	$h_2(j_i) \in b_i$, 
	and $[j_i,j_{i+1}]$ is homeomorphically embedded into $D_-(b_i)$ by $h_2$,
	$\forall i \in \{0,1,\ldots,q-k\}$.
	
	Since $g_1,g_2$ are the inscribed maps related to $\{(\tilde{H}_1,H_1),\ldots,(\tilde{H}_n,H_n)\}$ and $\{(\tilde{G}_1,G_1),\ldots,(\tilde{G}_n,G_n)\}$,
	there exsit embeddings $\tilde{h}_{1},\tilde{h}_{2}: [0,1] \to S$ such that $g_1 \circ \tilde{h}_1 = g_2 \circ \tilde{h}_2 = h_1$,
	$\{\tilde{h}_{1}(1)\} = \{\tilde{h}_{2}(1)\} = \partial S \cap g^{-1}_{1}(p_1)$,
	and there exist embeddings $\tilde{h}_{3},\tilde{h}_{4}: [0,1] \to S$ such that $g_1 \circ \tilde{h}_3 = g_2 \circ \tilde{h}_4 = h_2$,
	$\{\tilde{h}_{3}(1)\} = \{\tilde{h}_{4}(1)\} = \partial S \cap g^{-1}_{1}(p_2)$.
	Since $e_0 \in E(H_k)$ and $e_0 \notin E(G_k)$,
	$\tilde{h}_{1}([0,1]) \cap \tilde{h}_{3}([0,1]) = \emptyset$,
	but $\tilde{h}_{2}([0,1]) \cap \tilde{h}_{4}([0,1]) \ne \emptyset$.
	So there is a properly embedded arc $\tilde{h}_{2}([0,1]) \cap \tilde{h}_{4}([0,1])$ immersed to $h_1([0,1]) \cap h_2([0,1])$ under $g_2$,
	and there is no properly embedded arc immersed to $h_1([0,1]) \cap h_2([0,1])$ under $g_1$.
	Hence $g_1,g_2$ are inequivalent.
\end{proof}
    
\begin{prop}\label{surjective}
	Let $\Sigma$ be a closed oriented surface,
	and let $\Sigma_0$ be a compact orientable surface such that $\partial \Sigma_0$ has exactly one component.
	Let $g: \Sigma_0 \to \Sigma$ be an immersion.
	Let $\psi$ be the normal numbering of $g \mid_{\partial \Sigma_0}$ given by $g$.
	Let $n = \max \psi$.
	Let $\zeta$ be an inscribed set of $(g \mid_{\partial \Sigma_0},\psi)$.
	Then
        $I(\zeta) \ne \emptyset$,
    	and there exists $\{(\tilde{H}_1,H_1),\ldots,(\tilde{H}_n,H_n)\} \in I(\zeta)$ 
    	such that $g$ is the inscribed set of $g \mid_{\partial \Sigma_0}$ related to $\{(\tilde{H}_1,H_1),\ldots,(\tilde{H}_n,H_n)\}$.
    \end{prop}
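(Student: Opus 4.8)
The plan is to simplify the given immersion $g$ by a sequence of cancellation operations that peel off its sheets one numbering-level at a time, from the deepest region $D_n(f,\psi)$ (with $f = g\mid_{\partial \Sigma_0}$) down to the bottom, until an embedding is reached; the data recorded by these cancellations will assemble into a good element of $\zeta$, and the iff-characterization of inscribed maps stated just before Lemma \ref{realization} will then identify $g$ as the inscribed map related to that element. Concretely, I would construct the sequence directly, which amounts to an induction on the number of remaining sheets: the first cancellation is of Case \ref{case 1} type and initializes the graph $G'$ that tracks the branch locus, while every subsequent cancellation is of Case \ref{case 2} type and is fed the $G'$ produced by its predecessor. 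The surjectivity claim reduces entirely to producing such a sequence terminating in an embedding $g_1$.

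For the first step, since $g$ is an immersion it has no branch points, so $R(g) = D_n(f,\psi)$ contains no branch point and we are in Case \ref{case 1}. Over the interior of $D_n(f,\psi)$ the map $g$ is an unbranched $n$-sheeted cover, so the top sheet consists of embedded pieces. By Lemma \ref{case 1 appropriate} the pair $(R(g),N) = (D_n(f,\psi),V_n(f,\psi))$ is appropriate, so a $(D_n(f,\psi),V_n(f,\psi))$-trivalent graph $\tilde{H}_n$ exists; by Lemma \ref{case 1 cancellation} the induced construction triple is suitable and yields the $\tilde{H}_n$-cancellable domains (the top-sheet pieces). Canceling them gives $(g,\Sigma_0)\stackrel{\tilde{H}_n}{\leadsto}(g_n,\Sigma_n)$, where $g_n$ is a polymersion by Lemma \ref{cancellation-well}, with maximal numbering $n-1$ and associated subgraph $H_n \in sub(\tilde{H}_n)$ by Lemma \ref{case 1 subgraph}; the lift of $H_n$ into $\Sigma_n$ under the associated map of Definition \ref{regular} provides the graph $G'\subseteq \Sigma_n$ (with $g_n(G')=H_n$) serving as the Case \ref{case 2} input for the next step.

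For the iterative step I would show that $g_{k+1}$, together with the graph $G'$ satisfying $g_{k+1}(G')=H_{k+1}$, meets conditions (i)--(v) of Case \ref{case 2} and has $R(g_{k+1})=D_k(f,\psi)$; then Lemma \ref{case 2 appropriate} gives appropriateness, a thin $(D_k(f,\psi),H_{k+1},N_{k+1}\cup V_k(f,\psi))$-trivalent graph $\tilde{H}_k$ exists, Lemma \ref{case 2 cancellation} makes the construction triple suitable, and canceling yields $(g_{k+1},\Sigma_{k+1})\stackrel{(\tilde{H}_k,H_{k+1})}{\leadsto}(g_k,\Sigma_k)$ with associated subgraph $H_k \in sub_{H_{k+1}}(\tilde{H}_k)$ by Lemma \ref{case 2 subgraph} --- exactly matching the recursive construction of $\zeta_k$ in Definition \ref{inscribed set}. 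Each genuine peel lowers the maximal numbering by one, so after peeling levels $n,n-1,\ldots,2$ one reaches an embedding onto $D_1(f,\psi)$; the terminal step contributes the associated subgraph $H_1=\emptyset$ and fixes $g_1$ as this embedding. Thus $\{(\tilde{H}_1,H_1),\ldots,(\tilde{H}_n,H_n)\}$ has $H_1=\emptyset$ and $H_2,\ldots,H_n\ne\emptyset$, hence lies in $I(\zeta)$ (so in particular $I(\zeta)\ne\emptyset$), and the iff-characterization identifies $g$ as the inscribed map related to it.

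The main obstacle is the verification sustaining the iteration: that the output of each cancellation is a legitimate Case \ref{case 2} input. This requires a careful local analysis near the degree-$3$ vertices of the trivalent graph, where three sheets merge to create a branch point of the resulting polymersion; one must check that such points are exactly the critical points recorded by $G'$, that $G'$ is embedded in $\Sigma_k$ and maps homeomorphically onto its image, and above all that the clockwise edge-order condition (v) of Case \ref{case 2} is produced correctly at each newly created branch point. One must also confirm that the maximal numbering of $g_{k+1}$ is exactly $k$, so that the nested regions $D_n(f,\psi)\supseteq D_{n-1}(f,\psi)\supseteq\cdots$ are peeled in the correct order and $R(g_{k+1})=D_k(f,\psi)$ holds throughout; this is what guarantees that the produced graphs $\tilde{H}_k$ and $H_k$ are genuinely the ones arising in the construction of $\zeta$.
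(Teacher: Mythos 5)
Your proposal follows essentially the same route as the paper's own proof: an initial Case \ref{case 1} cancellation over $D_n(f,\psi)$ using Lemmas \ref{case 1 appropriate}, \ref{case 1 cancellation} and \ref{case 1 subgraph}, followed by an induction of Case \ref{case 2} cancellations fed by the associated map of the previous step (the paper's induction hypotheses (i)--(iii) are exactly the verifications you flag, including the clockwise edge-order condition), terminating in an embedding and hence a good element of $I(\zeta)$. The only cosmetic difference is that the paper certifies the terminal embedding via the sheet-counting function $d_r(x) = \#(r^{-1}(x)) + i(x)$ rather than your informal ``each peel lowers the maximal numbering by one,'' which is the same idea.
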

	
	\begin{proof}
		Let $f: \partial \Sigma_0 \to \Sigma$ be $g \mid_{\partial \Sigma_0}$,
		then $f$ is an homologically trivial immersion.
		
		To prove that there exists $\{(\tilde{H}_1,H_1),\ldots,(\tilde{H}_n,H_n)\} \in I(\zeta)$
		such that
		$g$ is the inscribed map of $f$ related to it,
		our aim is to 
		construct a sequence of cancellation operations
		$(g,\Sigma_0) \stackrel{\tilde{H}_n}{\leadsto} (g_n,\Sigma_n) \stackrel{(\tilde{H}_{n-1},H_n)}{\leadsto} 
		(g_{n-1},\Sigma_{n-1}) \stackrel{(\tilde{H}_{n-2},H_{n-1})}{\leadsto} \ldots
		\stackrel{(\tilde{H}_{1},H_2)}{\leadsto} (g_1,\Sigma_1)$,
		where $(g_k,\Sigma_k) \stackrel{(\tilde{H}_{k-1},H_k)}{\leadsto} (g_{k-1},\Sigma_{k-1})$ is the cancellation of $(\tilde{H}_{k-1},H_k)$-cancellable domains in $g_k$.
		
		$R(g) = D_n(f,\psi)$.
		We know $(D_n(f,\psi)),V_n(f,\psi))$ is appropriate by Lemma \ref{case 1 appropriate}.
		$\tilde{H}_n$ is determined by $\zeta$,
		and we denote by $U_{(n,1)}, U_{(n,2)}, \ldots, U_{(n,t_n)}$ the $\tilde{H}_n$-cancellable domains (Definition \ref{case 1 cancellation}).
		Let $H_n$ be $G(U_{(n,1)}, U_{(n,2)}, \ldots,$ $U_{(n,t_n)}) \in sub(H_n)$ (Lemma \ref{case 1 subgraph}),
		then $\{(\tilde{H}_n,H_n)\} \in \zeta_n$.
		We cancel $\{U_{(n,1)}, U_{(n,2)}, \ldots, U_{(n,t_n)}\}$.
		Then obtain a space $\Sigma_n$ and a polymersion $g_n: \Sigma_n \to \Sigma$.
		Let $f_n: \partial \Sigma_n \to \Sigma$ be $g_n \mid_{\partial \Sigma_n}$.
		
		Let $N_n = \{x \mid x \in V(H_n), deg(x) = 3\}$,
		then $N_n$ is the set of branch points of $g_n$
		and each element of $N_n$ has index $1$.
		The cancellation of $\{U_{(n,1)}, U_{(n,2)}, \ldots, U_{(n,t_n)}\}$ is regular (Definition \ref{regular} (ii)),
		so there exists a map $h_n: H_n \to \Sigma^{'}_n$ 
		associated to the cancellation of
		$\{U_{(n,1)}, U_{(n,2)}, \ldots, U_{(n,t_n)}\}$.
		
		We now do the steps by induction.
		
		Assume that
		there exists $\{(\tilde{H}_{i+1},H_{i+1}),\ldots,(\tilde{H}_n,H_n)\} \in \zeta_{i+1}$,
		and we have gotten the cancellations
		$(g,\Sigma_0) \stackrel{\tilde{H}_n}{\leadsto} (g_n,\Sigma_n) \stackrel{(\tilde{H}_{n-1},H_n)}{\leadsto} 
		\ldots \stackrel{(\tilde{H}_{i+1},H_{i+2})}{\leadsto} (g_{i+1},\Sigma_{i+1})$,
		$1 \leqslant i \leqslant n-1$.
		Now $g_{i+1}: \Sigma_{i+1} \to \Sigma$ is a polymersion of the compact orientable surface $\Sigma_{i+1}$ such that
		$R(g_{i+1}) = D_i(f,\psi)$.
		Let $f_{i+1}: \partial \Sigma_{i+1} \to \Sigma$ be $g_{i+1} \mid_{\partial \Sigma_{i+1}}$.
		
		We induce as follows:
		
		(i)
		Let $N_j = \{x \mid x \in V(H_j), deg(x) = 3\} \setminus N_{j+1}$,
		$\forall j \in \{i+1,\ldots,n\}$ ($N_{n+1} = \emptyset$).
		$N_{i+1}$ is the set of branch points of $g_{i+1}$,
		and each element of it has index $1$.
		
		(ii)
		There is an embedding $h_{i+1}: H_{i+1} \to \Sigma_{i+1}$ such that:
		\begin{center}
			$g_{i+1} \circ h_{i+1} = id$,
		\end{center}
		\begin{center}
			$h_{i+1} (y) \in \partial \Sigma_{i+1}$, $\forall y \in H_{i+1} \cap \partial D_{i+1}(f,\psi)$,
		\end{center}
	    and $h_{i+1}(N_{i+1})$ is the set of critical points of $g_{i+1}$.
		
		(iii)
		For each $x \in N_{i+1}$,
		let $t = h_{i+1}(x)$,
		which is the critical point of multiplicity $1$ mapped to $x$.
		Let $a,b,c$ be the $3$ edges of $H_{i+1}$ at $x$ clockwise.
		Then $h_{i+1}(a),h_{i+1}(c),h_{i+1}(b)$ are clockwise around $t$.
		
		\begin{figure}\label{$g_{i+1}$ near $x$}
			\centering 
			\includegraphics[width=0.5\textwidth]{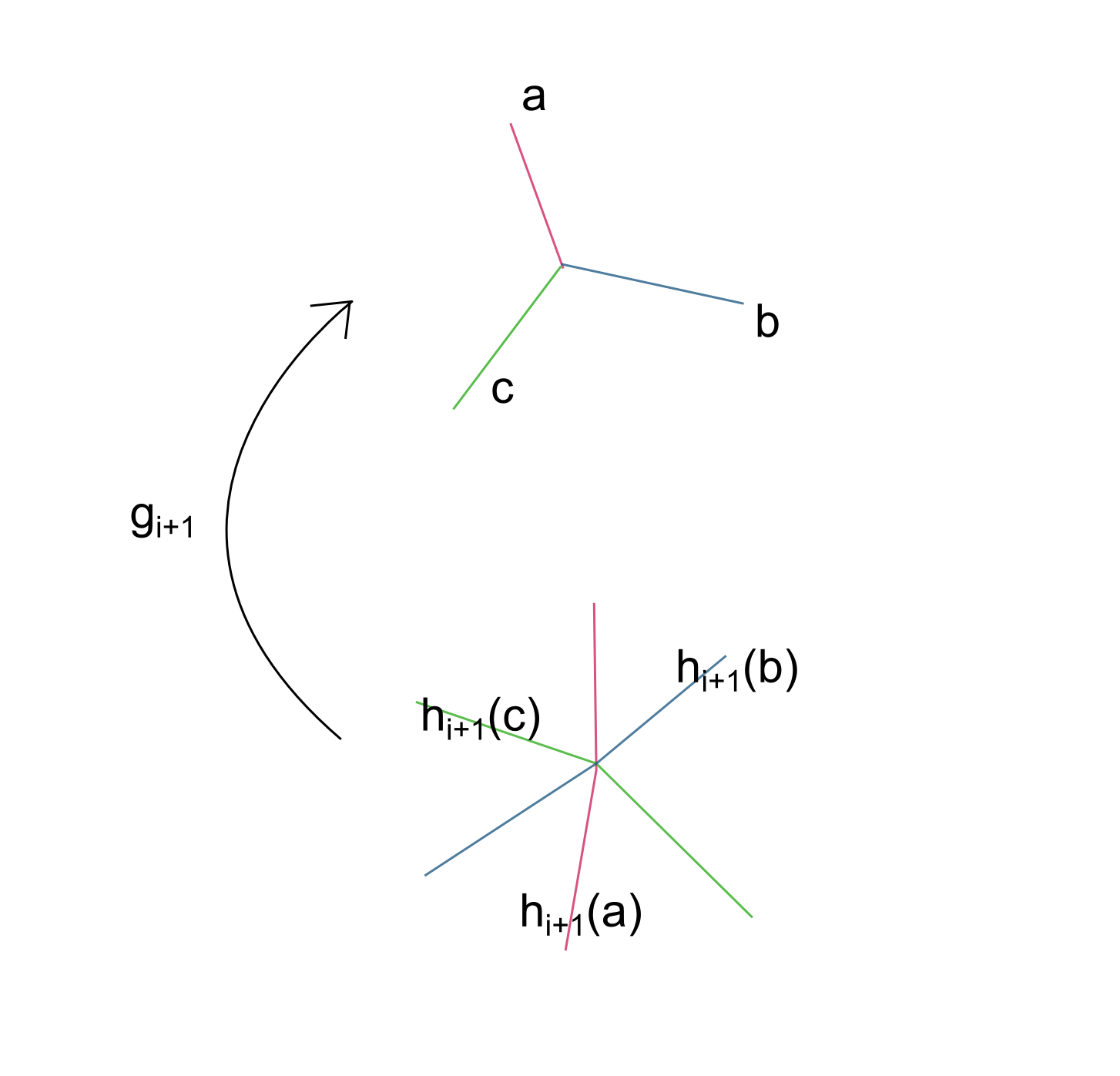}
			\caption{$g_{i+1}$ near $x$.}
		\end{figure}
		
		We know $(D_i(f,\psi),H_{i+1},N_{i+1} \cap V_i(f,\psi))$ is appropriate by Lemma \ref{case 2 appropriate}.
		The graph $\tilde{H}_i$ is determined by $\{(\tilde{H}_{i+1},H_{i+1}),\ldots,(\tilde{H}_n,H_n)\} \in \zeta_{i+1}$.
		We denote by $U_{(i,1)}, U_{(i,2)}, \ldots, U_{(i,{t_i})} \subseteq \Sigma_{i+1}$ the $\tilde{H}_i$-cancellable domains (Definition \ref{case 2 cancellation}). 
		Let $H_i$ be $G(U_{(i,1)}, U_{(i,2)}, \ldots, U_{(i,{t_i})}) \in sub_{H_{i+1}}(\tilde{H}_i)$ (Lemma \ref{case 2 subgraph}), 
		then $\{(\tilde{H}_i,H_i), (\tilde{H}_{i+1},H_{i+1}) \ldots, (\tilde{H}_n,H_n)\} \in \zeta_i$. 
		We cancel $\{U_{(i,1)}, U_{(i,2)}, \ldots, U_{(i,{t_i})}\}$,
		and obtain a surface $\Sigma_i$ and a polymersion
		$g_i: \Sigma_i \to \Sigma$.
		Let $f_i: \partial \Sigma_i \to \Sigma = g_i \mid_{\partial \Sigma_i}$.
		
	    Note that the elements of $N_{i+1}$ are not branch points of $g_i$ (induction hypothesis (iii)),
	    and the cancellation of $\{U_{(i,1)}, U_{(i,2)}, \ldots, U_{(i,{t_i})}\}$ is regular.
		So there exists an associated map $h_i: H_i \to \Sigma_i$ 
		of canceling
		$\{U_{(i,1)}, U_{(i,2)}, \ldots, U_{(i,t_i)}\}$.
		Let $N_i = \{x \mid x \in V(H_i), deg(x) = 3\} \setminus N_{i+1}$,
		then $N_i$ is the set of branch points of $g_i$,
		and each element of $N_i$ has index $1$.
		Also,
		the images of edges of $H_i$ at $N_i$ have positions in accordance with induction hypothesis (iii).
		So the induction hypothesises (i), (ii), (iii) are established when $k = n$.\
		
		We construct cancellations
		$(g,\Sigma_0) \stackrel{\tilde{H}_n}{\leadsto} (g_n,\Sigma_n) \stackrel{(\tilde{H}_{n-1},H_n)}{\leadsto} \ldots
		\stackrel{(\tilde{H}_{1},H_2)}{\leadsto} (g_1,\Sigma_1)$
		by the induction.
		Next,
		we prove $\{(\tilde{H}_1,H_1),\ldots,(\tilde{H}_n,H_n)\} \in I(\zeta)$.
		
		Assume $r: A \to B$ is a polymersion ($A, B$ are surfaces).
		For each $x \in B$,
		let $d_r(x) = \#(\{r^{-1}(x)\}) + i(x))$ ($i(x) =$ the index of $x$ if $x$ is a branch point, otherwise $i(x) = 0$).
		Then $d_{g_{i+1}}(x) - d_{g_i}(x) = 1$ if $x \in D_i(f,\psi)$,
		$d_{g_{i+1}}(x) = d_{g_i}(x)$ if $x \notin D_i(f,\psi)$,
		$\forall i \in \{3,4,\ldots,n\}$.
		After $n-1$ steps,
		$d_{g_2}(x) = 1$, $\forall x \in D_1(f,\psi)$.
		So $\Sigma_2$ is homeomorphically embedded into $D_1(f,\psi)$ by $g_2$.
		This means $H_1 = \emptyset$.
		$\Sigma_0$ is connceted so $H_k = \emptyset$,
		$\forall k \geqslant 2$.
		So we get a good graph set $\{(\tilde{H}_1,H_1),\ldots,(\tilde{H}_n,H_n)\} \in \zeta$ from the steps above, 
		and $g$ is the inscribed map of $f$ related to it. 
	\end{proof}

\section{Summary}\ \label{section 7}

\begin{figure}\label{example 1}
	\centering
	\includegraphics[width=0.35\textwidth]{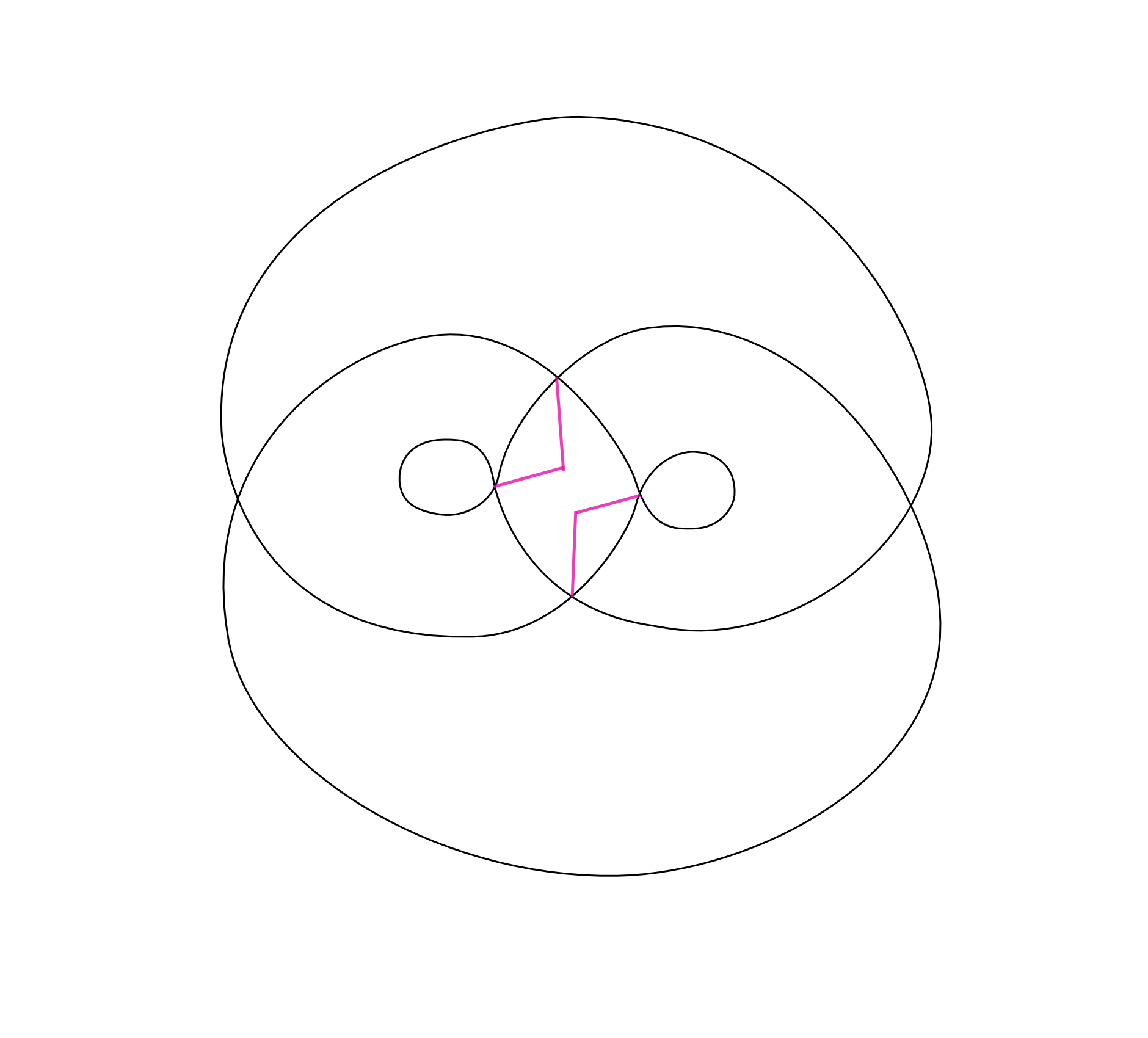}
	\includegraphics[width=0.35\textwidth]{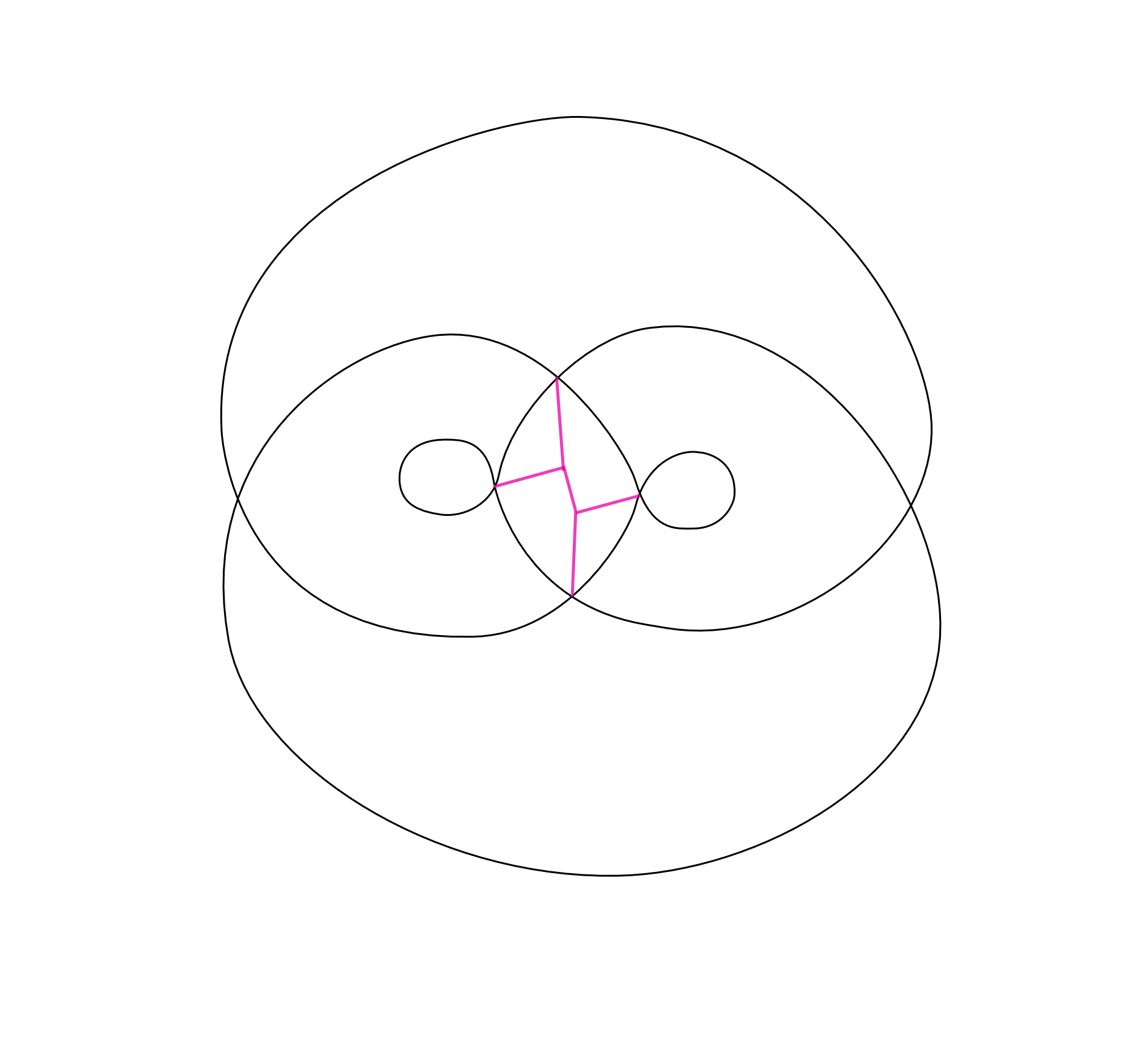}
	(a)
	\includegraphics[width=0.25\textwidth]{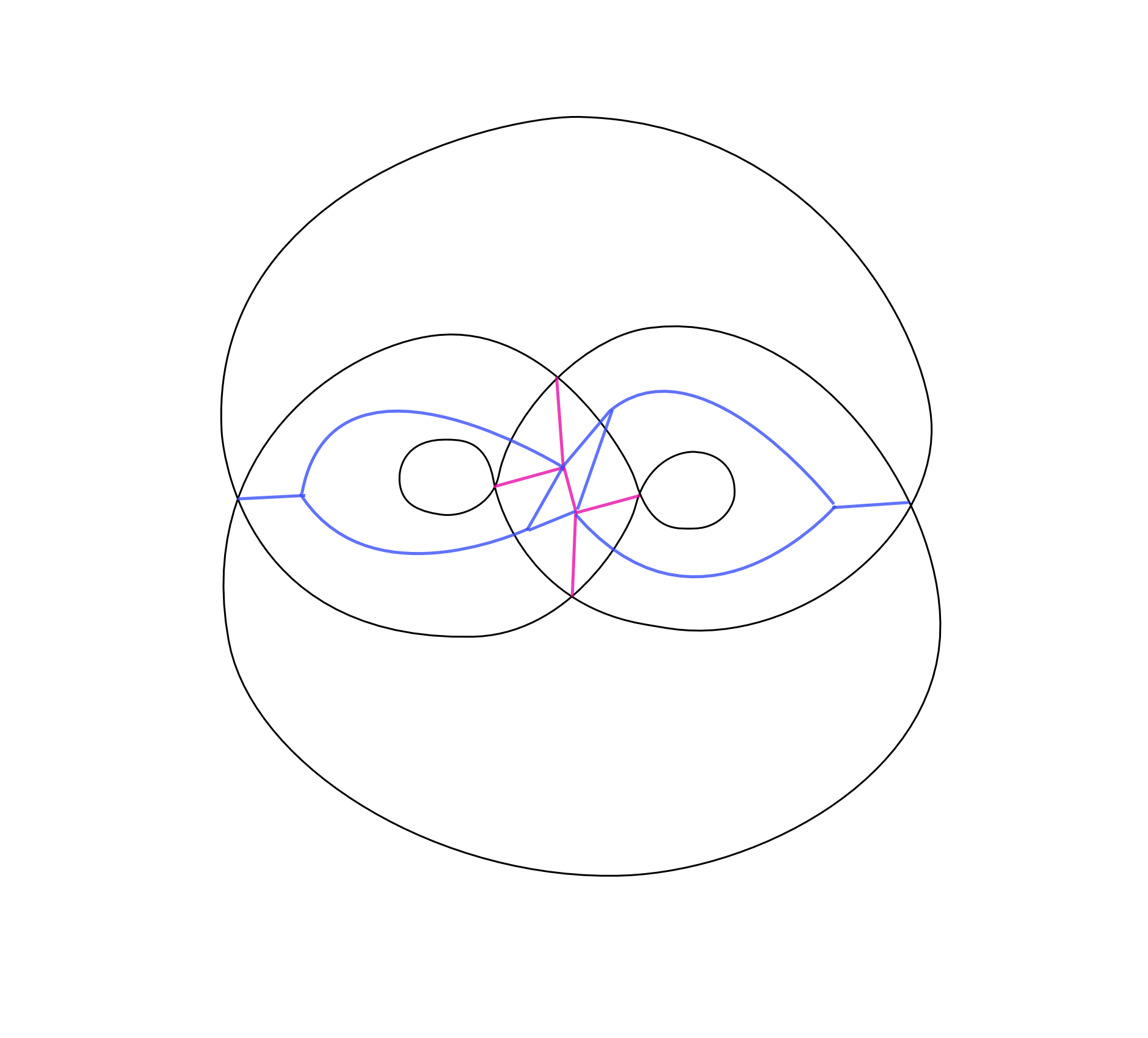}
	\includegraphics[width=0.25\textwidth]{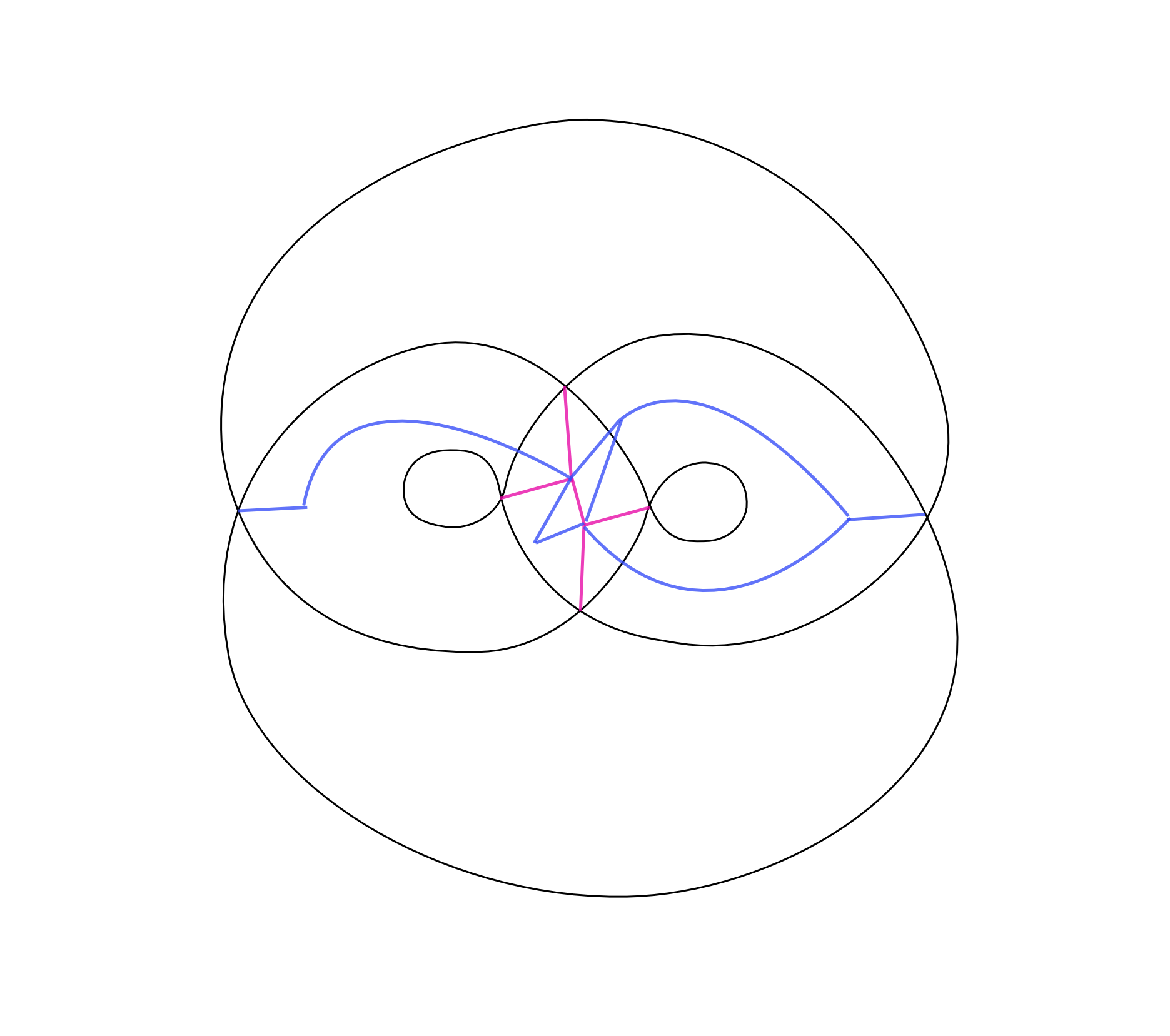}
	\includegraphics[width=0.25\textwidth]{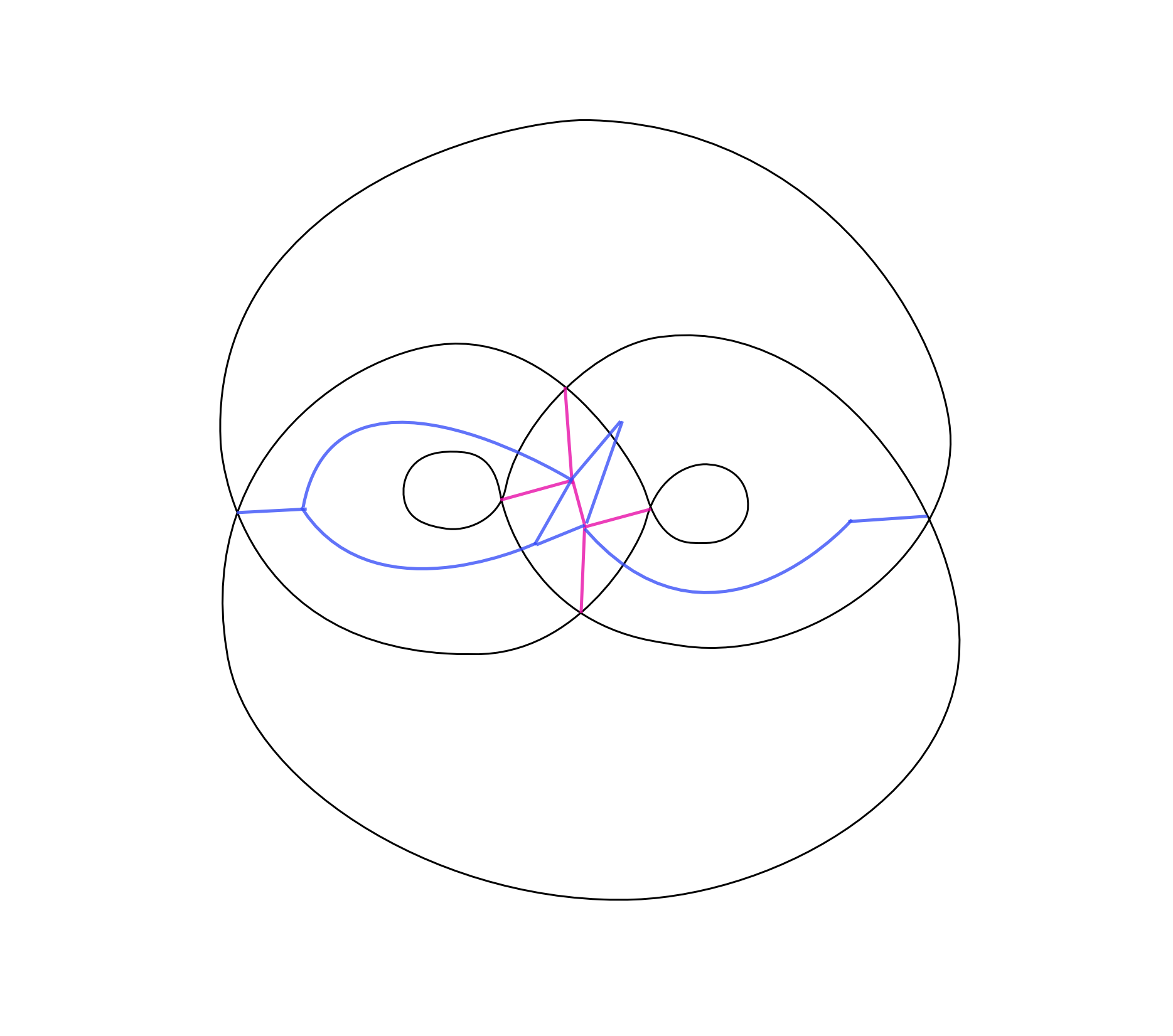}
	\includegraphics[width=0.25\textwidth]{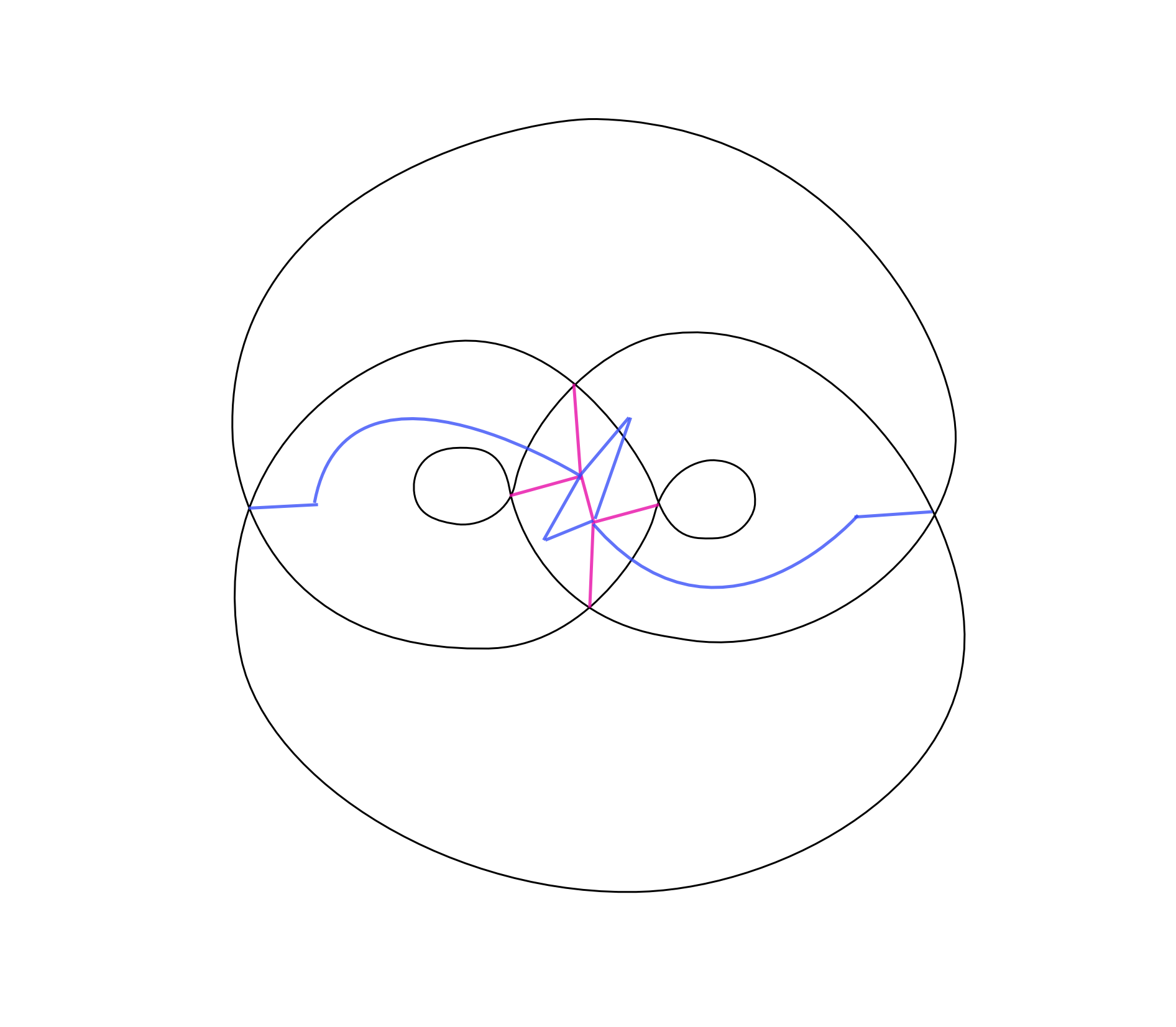}
	\includegraphics[width=0.25\textwidth]{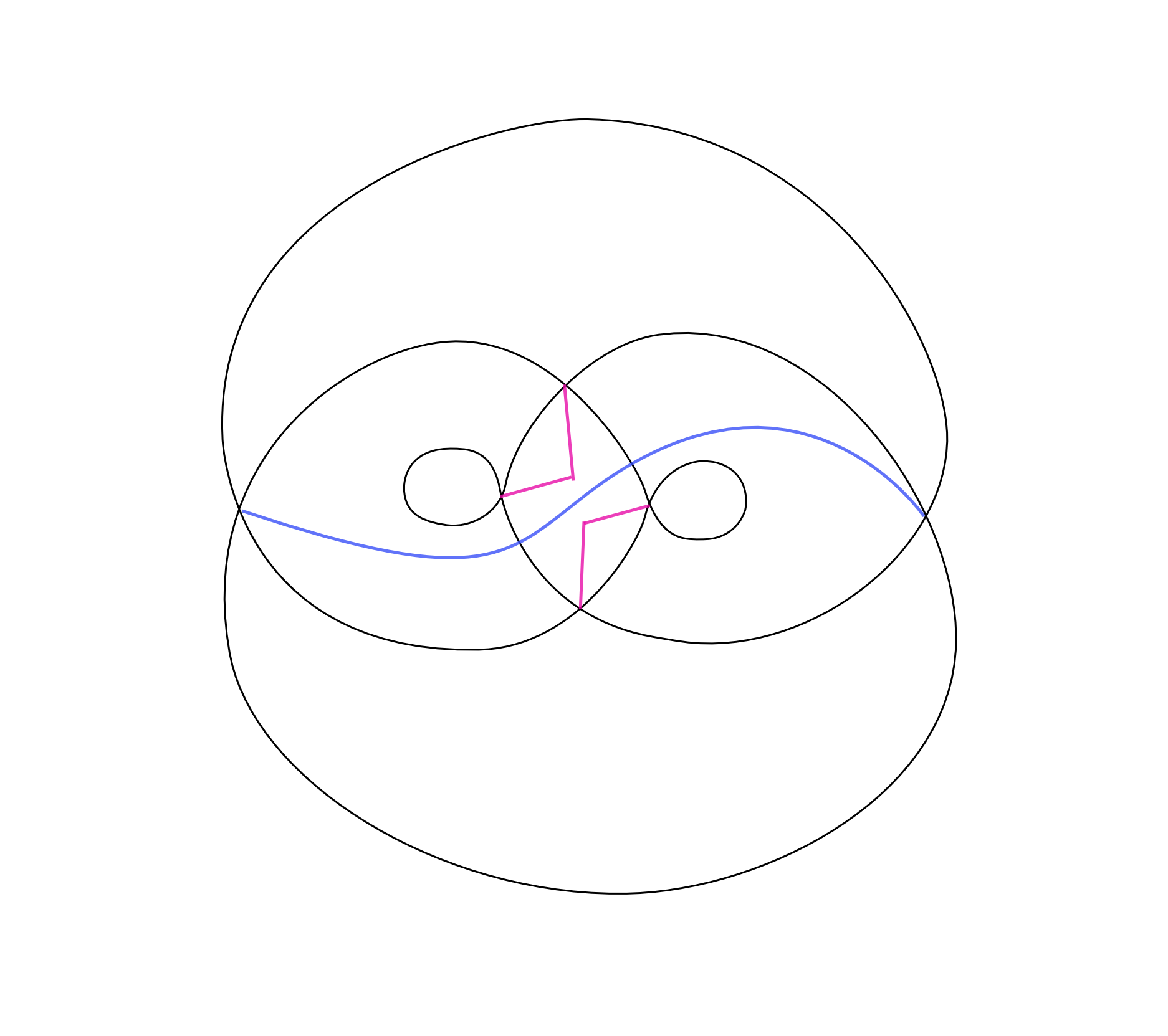}
	(b)
	\includegraphics[width=0.45\textwidth]{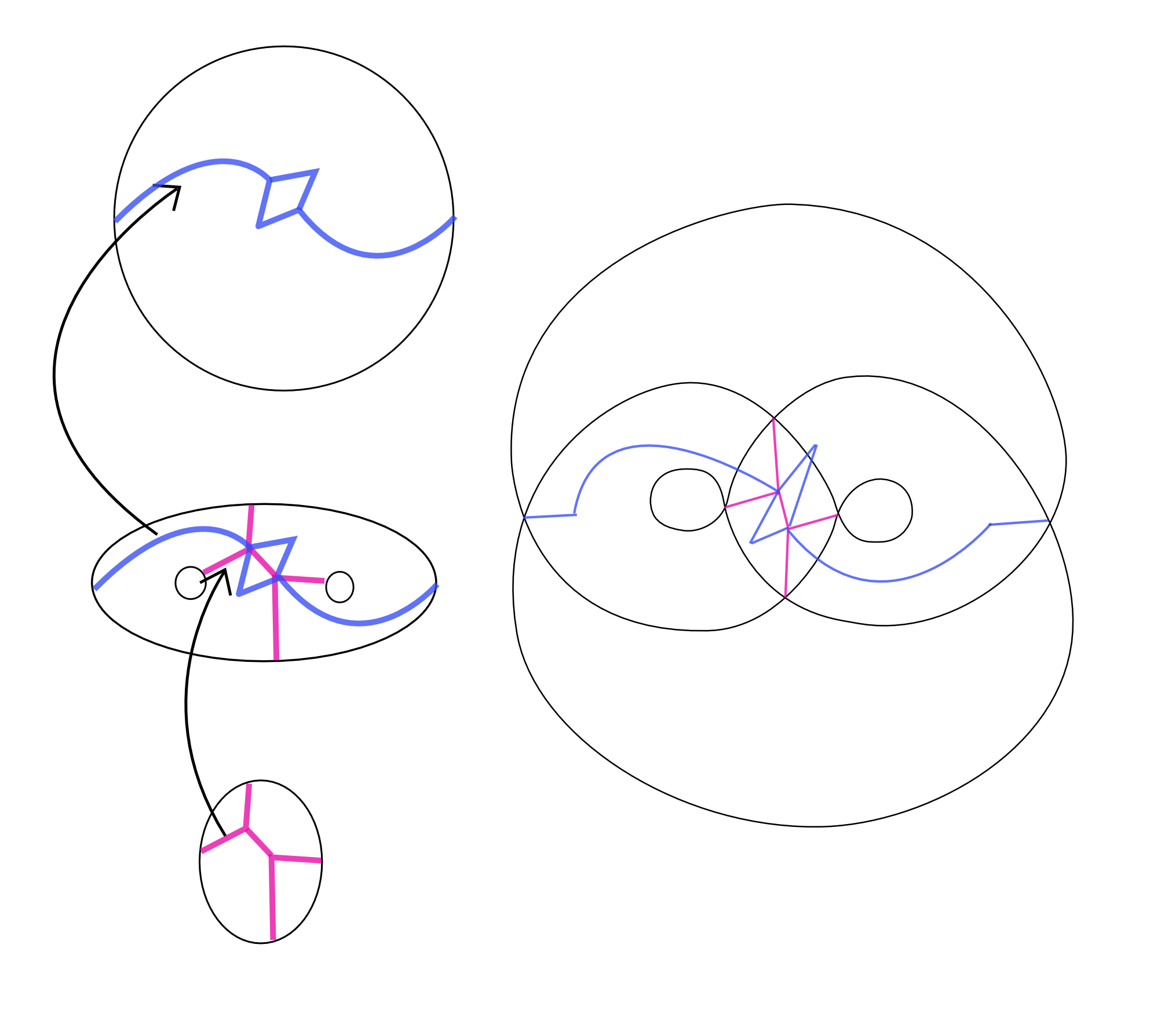}
	\includegraphics[width=0.45\textwidth]{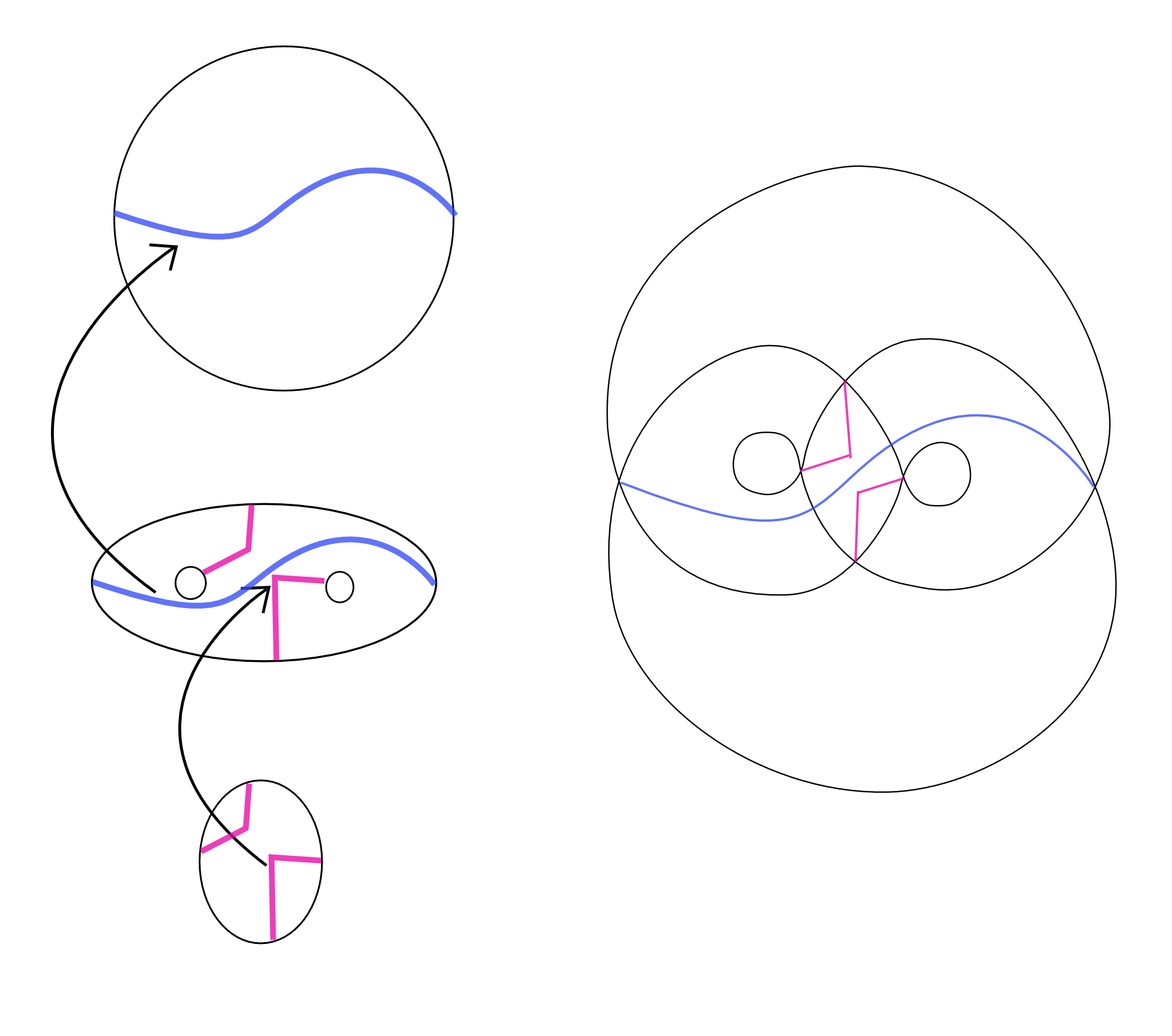}
	(c)
	\caption{Example $1$: (a) $\zeta_3$, (b) $\zeta_2$, (c) $I(\zeta)$, and the construction of inscribed maps.}
\end{figure}

\begin{figure}\label{surface graph}
	\centering 
	\includegraphics[width=0.8\textwidth]{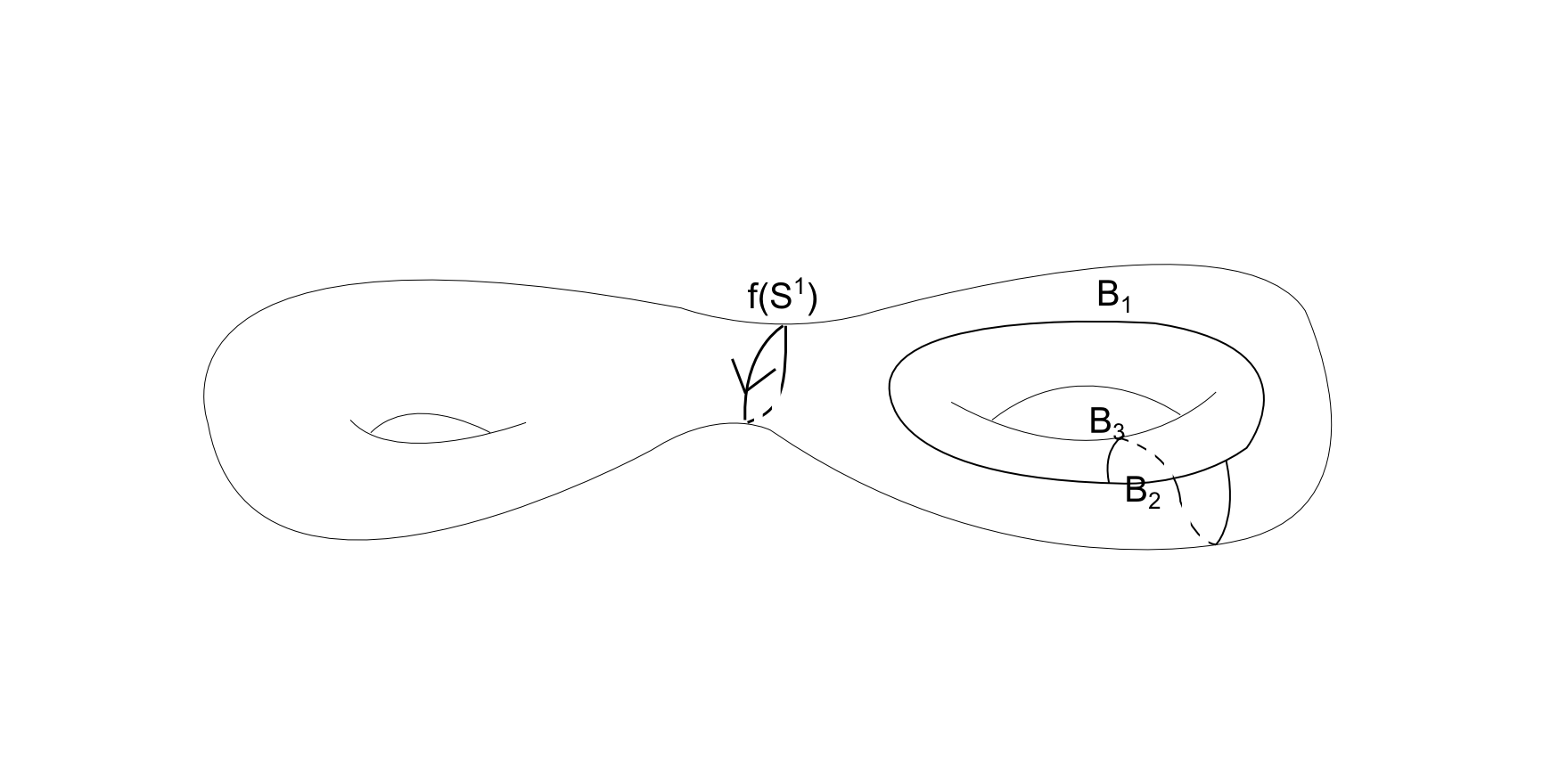}
	\caption{Example $2$.}
\end{figure}

We summarize the result without fixing a normal numbering as follows.

Fix a closed oriented surface $\Sigma$ and a homologically trivial immersion $f: S^{1} \to \Sigma$.
Let $\psi_i$ denote the normal numbering of $f$ such that $\min \psi_i = i$,
$i \in \mathbb{Z}_{\geqslant 0}$.
Let $\zeta$ be an inscribed set of $(f,\psi_0)$.
We construct $\zeta_0, \zeta_{-1}, \zeta_{-2},\ldots$ as follows:
we similarly obtain $\zeta_{-i}$ from $\zeta_{-i+1}$ as obtaining $\zeta_k$ from $\zeta_{k+1}$ ($k \geqslant 1$) in Definition \ref{inscribed set}, $i \geqslant 0$.
Let $I(\zeta_{-i})$ ($i \geqslant 0$) be the set composed of each $\{(\tilde{H}_{-i},H_{-i}),\ldots,(\tilde{H}_0,H_0),(\tilde{H}_1,H_1),\ldots,(\tilde{H}_n,H_n)\} \in \zeta_{-i}$
such that $H_{-i} = \emptyset,$ $H_{k} \ne \emptyset$, for each $-i+1 \leqslant k \leqslant n$.
Now,
each element of
$I(\zeta_{-i})$ $(i \geqslant 0)$ is in correspondence with an inscribed map of $(f,\psi_{i+1})$,
which is an immersion of a connected oriented surface into $\Sigma$.
Recall that the extensions related to different normal numberings are inequivalent (or, see \hyperref[Frisch]{[8}, Lemma 5.3.2]).
Applying Theorem \ref{main} for all normal numberings to
get a bijection between the set of all equivalence classes of extensions of $f$ and $I(\zeta) \cup (\bigcup_{i \geqslant 0}I(\zeta_{-i}))$.

Our definitions and results can be extended to the cases of $\Sigma = \mathbb{R}^{2}$ or $\Sigma$ is a compact orientable surface with nonempty boundary.
In these cases,
since arbitrary extensions of $f$ (if exist) are related to a unique normal numbering,
we only need to consider one normal numbering (see Subsection \ref{subsection 2.1}).

\section{Examples}\ \label{section 8}

\begin{ep}\rm
Let $f: S^{1} \to \mathbb{R}^{2}$ be an immersion such that $f(S^{1})$ is a Milnor curve (Figure \ref{Milnor curve}).
Let $\psi$ be the normal numbering whose images are winding numbers.
Fix $\zeta$ an inscribed set of $(f,\psi)$ shown by Figure \ref{example 1} (a), (b) (assume $X \in \zeta_i$, $X = \{(\tilde{H}_i(X),H_i(X)),\ldots,(\tilde{H}_3(X),H_3(X))\}$. We only paint $H_i(X),\ldots,H_n(X)$ in the figure of it).
Then $I(\zeta)$ is shown by Figure \ref{example 1} (c).
The inscribed maps related to the elements of $I(\zeta)$ are also shown by it.
\end{ep}

\begin{ep}\rm
Let $\Sigma$ be a surface of genus $2$
and $f: S^{1} \to \Sigma$ the immersion shown by Figure \ref{surface graph}.
Assume $\psi$ is a normal numbering such that the left and right side of $f(S^{1})$ has image $2$ and $1$.
We set $\tilde{H}_2$ (see Figure \ref{surface graph}) that has $3$ edges $B_1,B_2,B_3$.
$H_2$ can be $B_1 \cup B_2$ or $B_1 \cup B_3$ or $B_2 \cup B_3$.
Hence there are $3$ inequivalent extensions exactly.
\end{ep}

\section{Acknowledgments}\ \label{section 9}

I thank Professor Shicheng Wang, since he introduced me the book \emph{Problems in Low-dimensional Topology} and intrigued my thinking through the discussion, and encouraging me insisting in my initial idea. I thank Professor Yi Liu for his much advice, especially in suggesting me to try applying my planar technique to the case of surface. I thank Professor Jiajun Wang, in pointing out the things I should do, and set the standard for me. I am grateful to them.


\begin{thebibliography}{10}
 	
 	\bibitem{Bai67}
 	K. D. Bailey, 
 	\newblock{Extending closed plane curves to immersions of the disk with n handles}, 
 	Trans. Amer. Math. Soc. 206 (1975), 1-24
 	\label{Bailey}
 	
    \bibitem{Bla67}
    S. J. Blank, 
    \newblock{Extending Immersions and regular Homotopies in Codimension 1}, PhD thesis, Brandeis University, 1967 \label{Blank}
    
    \bibitem{Cal09}
    D. Calegari, 
    \newblock{Faces of the scl-norm ball}, Geom. Topol. 13 (2009), 1313-1336 \label{Calegari}
    
    \bibitem{CW86}
    C. Curley, D. Wolitzer, 
    \newblock{Branched immersions of surfaces}, Mich. Math. J. 33 (1986), 134-144
    \label{Curley, Wolitzer}
    
    \bibitem{EM80}
    C. L. Ezell and M. L. Marx, 
    \newblock{Branched extensions of curves in orientable surfaces}, Trans. Amer. Math. Soc. 259 (1980), 515-532 \label{Ezell, Marx}
    
    \bibitem{Fra70}
    G. K. Francis, 
    \newblock{Extensions to the disks of properly nested plane immersions of the circle}, Mich. Math. J. 17 (1970), 377-383 \label{Francis}
    
    \bibitem{Fra73}
    G. K. Francis, 
    \newblock{Spherical curves that bound immersed discs}, Proc. Amer. Math. Soc. 41 (1973), 87-93
    \label{Francis2}
    
    \bibitem{Fri10}
    D. Frisch, 
    \newblock{Classifications of Immersions which are Bounded by Curves in Surfaces}, PHD thesis, TU Darmstadt, 2010
    \label{Frisch}
    
    \bibitem{Kir95}
    R. Kirby,
    \newblock{Problems in Low-Dimensional Topoplogy}, (In R.Kirby, editor) Geometric topology (Athens, GA, 1993), AMS/IP Stud. Adv. Math. 2, Amer. Math. Soc., Providence, RI (1997) 35-473
    \label{Kirby}
    
    \bibitem{MC93}
    M. McIntyre, G. Cairns,
    \newblock{A new formula for winding number},
    Geom. Dedicata 46 (1993), 149-160 
    \label {McIntyre, Cairns}
    
    \bibitem{McI97}
    M. McIntyre, 
    \newblock{Bounding immersed curves}, Topology Appl. 78(1997) 251-267
    \label{McIntyre}
    
    \bibitem{Poe69}
    V. Poenaru, 
    \newblock{Extensions des immersions en codimension 1}, Bourbaki seminar, 1967/1968 (French), Exp. No. 342, Benjamin, New York, 1969
    \label{Poenaru}
    
    \bibitem{Tit61}
    C. J. Titus,
    \newblock{The conbinatorial topology of analytic functions on the boundary of a disk}, Acta Math. 106 (1961), 45-64
    \label{Titus}

\bibitem{Spr01}
D. Spring, 
\newblock{The Golden Age of Immersion Theory in Topology: 1959-1973. A mathematical
survey from a historical perspective}, Bull. Amer. Math. Soc. 42 (2005), 163-180
\label{Spring}

 \end{thebibliography}
\end{document}